\newtheorem{theorem}{Theorem}[section]
\newtheorem{lemma}[theorem]{Lemma}
\newtheorem{corollary}[theorem]{Corollary}
\newtheorem{definition}[theorem]{Definition}
\newtheorem{example}[theorem]{Example}
\newtheorem{remark}[theorem]{Remark}
\newenvironment{proof}{\paragraph{Proof:}}{\hfill$\square$}
\let\origitemize\itemize
\def\itemize{\origitemize\itemsep0pt}
\numberwithin{equation}{section}
\DeclareOldFontCommand{\rm}{\normalfont\rmfamily}{\mathrm}
\DeclareOldFontCommand{\sf}{\normalfont\sffamily}{\mathsf}
\DeclareOldFontCommand{\tt}{\normalfont\ttfamily}{\mathtt}
\DeclareOldFontCommand{\bf}{\normalfont\bfseries}{\mathbf}
\DeclareOldFontCommand{\it}{\normalfont\itshape}{\mathit}
\DeclareOldFontCommand{\sl}{\normalfont\slshape}{\@nomath\sl}
\DeclareOldFontCommand{\sc}{\normalfont\scshape}{\@nomath\sc}
\begin{document}

\title{
	Nonlinear Cone Separation Theorems in Real Topological Linear Spaces
}

\author{Christian Günther\thanks{Leibniz Universität Hannover, Institut für Angewandte Mathematik, Welfengarten 1, 30167 
		Hannover, Germany
		(\href{mailto:c.guenther@ifam.uni-hannover.de}{c.guenther@ifam.uni-hannover.de}).}
	\and Bahareh Khazayel\thanks{Martin Luther University Halle-Wittenberg, Faculty
		of Natural Sciences II, Institute of Mathematics, 06099 Halle (Saale), Germany
		(\href{mailto:Bahareh.Khazayel@mathematik.uni-halle.de}{Bahareh.Khazayel@mathematik.uni-halle.de}).}
	\and Christiane Tammer\thanks{Martin Luther University Halle-Wittenberg, Faculty
		of Natural Sciences II, Institute of Mathematics, 06099 Halle (Saale), Germany
		(\href{mailto:Christiane.Tammer@mathematik.uni-halle.de}{Christiane.Tammer@mathematik.uni-halle.de}).}}

\maketitle

\begin{center}
	\textbf{Abstract}
\end{center}
\begin{abstract}
	The separation of two sets (or more specific of two cones) plays an important role in different fields of mathematics such as variational analysis, convex analysis, convex geometry, optimization. In the paper, we derive some new results for the separation of two not necessarily convex cones by a (convex) cone / conical surface in real (topological) linear spaces. Basically, we follow the separation approach by Kasimbeyli (2010, SIAM J. Optim. 20) based on augmented dual cones and Bishop-Phelps type (normlinear) separating functions. Classical separation theorems for convex sets are the key tool for proving our main nonlinear cone separation theorems. 
\end{abstract}


\begin{flushleft}
	\textbf{Keywords:} Separation theorem, cone separation, nonconvexity, cone, base, augmented dual cone, Bishop-Phelps cone, seminorm, norm, normlinear function
\end{flushleft}

\begin{flushleft}
	\textbf{Mathematics subject classifications (MSC 2010):} 46A22, 49J27, 65K10, 90C48 
\end{flushleft}

\section{Introduction}

It is well-known that the separation of two sets (or more specific of two cones) plays an important role in different fields of mathematics (such as variational analysis, convex analysis, convex geometry, optimization). Basically the following question is of interest: Under which assumptions on the real (topological) linear space $E$ and the sets $\Omega^1, \Omega^2 \subseteq E$ one can ensure the existence of a nontrivial element $x^*$ of the (algebraic; topological) dual space of $E$ such that the weak [proper] separation condition
\begin{equation}\label{f11}\sup_{\omega^1 \in \Omega^1}\, x^*(\omega^1) \leq \inf_{\omega^2 \in \Omega^2}\, x^*(\omega^2) \quad \left[\;\mbox{and} \quad \inf_{\omega^1 \in \Omega^1}\, x^*(\omega^1)  < \sup_{\omega^2 \in \Omega^2}\, x^*(\omega^2)\;\right],\end{equation}
or the strict separation condition
$$\sup_{\omega^1 \in \Omega^1}\, x^*(\omega^1)  < \inf_{\omega^2 \in \Omega^2}\, x^*(\omega^2)$$
holds true? For proving the existence of such a $x^*$ one needs usually some convexity assumptions, certain closedness / compactness / solidness assumptions and a disjointedness assumption on $\Omega^1$ and $\Omega^2$. 
From the historical point of view, assertions about the existence of linear separating functions for two convex sets such that \eqref{f11} holds (also known as Hahn-Banach separation theorems or hyperplane separation theorems) were developed from the classical Hahn-Banach theory (i.e., theorems related to the extension of bounded linear functions defined on a subspace of some linear space to the whole space) which is a central tool in functional analysis. Geometric versions of the Hahn–Banach theorem go back to works by Dieudonne, by Eidelheit, by Holmes, by James, by Mazur, and by Tukey. To get a further overview on this topic, we refer the reader also to the references by Aliprantis and Border \cite[Ch. 5]{AliprantisBorder},  Deumlich, Elster and Nehse \cite{DeumlichElsterNehse}, Elster and Nehse \cite{ElsterNehse1978},  Göpfert {\rm et al.} \cite[Sec. 2.2]{GoeRiaTamZal2003}, Holmes \cite{Holmes}, Jahn \cite[Ch. 3]{Jahn2011}, Khan, Tammer and Z\u{a}linescu \cite[Ch. 5 and 6]{Khanetal2015}, Soltan \cite{Soltan2021},  Z\u{a}linescu \cite[Ch. 1]{Zali2002}, and references therein.
 
Removing the convexity assumptions on $\Omega^1$ and $\Omega^2$, it is easy to observe that one cannot expect the existence of such a linear function $x^*: E \to \mathbb{R}$ in the separation condition \eqref{f11}. Thus, the idea arises to replace the linear function $x^*$ in the separation condition \eqref{f11} by a nonlinear function $\varphi: E \to \mathbb{R}$. Kasimbeyli \cite{Kasimbeyli2010} (compare also \cite{AcaKas21}, \cite{BagAdi13}, \cite{GasOzt06}, \cite{OztGur15}) had the idea to use (in a reflexive Banach space setting) a Bishop-Phelps type (normlinear) function $\varphi_{x^*, \alpha}: E \to \mathbb{R}$ (where some linear continuous function $x^*$ in the topological dual space $E^*$ of $E$, the norm $\mid \mid \cdot \mid \mid $ on $E$, and some $\alpha \geq 0$ are involved), which is defined by
\begin{equation}\label{f12}
\varphi_{x^*, \alpha}(x)  := x^*(x) + \alpha ||x|| \quad \mbox{for all } x \in E,
\end{equation}
as a nonlinear separating function. The type of function $\varphi_{x^*, \alpha}$ in \eqref{f12} is associated to so-called Bishop-Phelps cones and dates back to the work by Bishop and Phelps \cite{BP1962} (see Section \ref{sec:bp_cones} for more details). Observe, the combination of a norm function with a linear (continuous) function in \eqref{f12} leads also to the name ``normlinear function'' (see the recent paper by Zaffaroni \cite{Zaffaroni2022} with $x^* \in E^*$ and $\alpha \in \mathbb{R}$ in \eqref{f12}). 
Clearly, also other nonlinear functions, for instance by Drummond and Svaiter \cite{DrummondSvaiter}, Gerstewitz \cite{Gerstewitz1983} or Hiriart-Urruty \cite{HiriartUrruty} (see also Bouza, Quintana and Tammer \cite{BouzaQuintanaTammer}, Gerth and Weidner \cite{GerthWeidner}, Göpfert {\rm et al.} \cite[Sec. 2.3]{GoeRiaTamZal2003}, Ha \cite[Sec. 3.1 and 3.2]{Ha2022}, Jahn \cite{Jahn2022, Jahn2023}, Tammer and Weidner \cite[Sec. 4.11]{TammerWeidner2020}, Zaffaroni \cite[Prop. 3.2]{Zaffaroni2003}), could be used in a nonlinear separation approach for not necessarily convex sets.

Furthermore, it is important to mention that the extremal principle by Kruger and Mordukhovich \cite{KruMor80a,Kru98,Mor76,Mor94} is a dual approach to separation of not necessarily convex sets. This extremal principle is a deep assertion concerning the existence of linear continuous functions with important separation properties concerning nonconvex sets and essential applications in variational analysis, geometrical approaches  and optimization. Recent developments and extensions of the extremal principle are given by Bui and Kruger in \cite{BuiKru2018}, \cite{BuiKru2019}.

Consider an extremal system $\{ \Omega^1, \ldots ,\Omega^n , \overline{x} \}$ (see Mordukhovich \cite{Mordukhovich2006a} and Mordukhovich, Nam \cite[Definition 3.6]{MorNam2022})  in a real normed space $E$. 
Then, $\{ \Omega^1, \ldots ,\Omega^n , \overline{x} \}$ fulfills the {exact extremal principle} if there are basic normals
\begin{equation}\label{f-extr4}
x^*_j \in N_M (\overline{x}; \Omega^j ), \quad j = 1, \ldots ,n,
\end{equation}
such that
\begin{equation}\label{f-extr2}
x^*_1 + \cdots + x^*_n = 0, \qquad \|x_1^* \|_* + \cdots + \|x^*_n\|_* =1 
\end{equation}
holds. Here, $N_M (\overline{x}; \Omega^j )$ denotes the Mordukhovich (limiting) normal cone (see \cite[Definition 7.66]{MorNam2022}), and $\|\cdot\|_*$ denotes the dual norm in $E^*$.

Considering the exact extremal principle for two sets $\Omega^1$ and $\Omega^2$, formula (\ref{f-extr4}) and (\ref{f-extr2}) reduce to the condition that there is an element $x^* \in E^* \setminus \{0\}$ with
\begin{equation}\label{l-sep-extrpr}
 x^* \in N_M (\overline{x}; \Omega^1 ) \cap (- N_M (\overline{x}; \Omega^2 )).
\end{equation}
If $\Omega^1$ and $\Omega^2$ are convex sets and $\Omega^1 - \Omega^2 $ is topologically solid, then (\ref{l-sep-extrpr}) coincides with
\[\forall \; x_1 \in \Omega^1, \; \forall x_2 \in \Omega^2: \qquad  x^*( x_1 ) \leq  x^*( x_2) , \]
i.e., we get the classical weak separation property (compare \eqref{f11}) for two convex sets, see Mordukhovich, Nam \cite[Theorem 3.7]{MorNam2022}.

\medskip

In our paper, the separating function $\varphi: E \to \mathbb{R}$ will be given by a combination of a seminorm $\psi: E \to \mathbb{R}$ with a linear function $x^*$, i.e.,
$\varphi := \varphi_{x^*, \alpha}: E \to \mathbb{R}$ (for some $x^*$ in the  (algebraic; topological) dual space of $E$, and some $\alpha \geq 0$), where
\begin{equation}
    \label{f16}
    \varphi_{x^*, \alpha}(x)  := x^*(x) + \alpha \psi(x) \quad \mbox{for all } x \in E.
\end{equation}
We will use functions of type \eqref{f16} in order to derive separation conditions for not necessarily convex cones within the framework of real topological linear spaces having in mind certain applications in vector optimization and order theory (see \cite{Kasimbeyli2010}).

Similar to the work by Kasimbeyli \cite{Kasimbeyli2010}, we are primarily interested in the situation that both sets $\Omega^1$ and $\Omega^2$ of the real topological linear space $E$ (with underlying topology $\tau$) are given by cones (i.e., $\emptyset \neq \Omega^i = \mathbb{R}_+ \cdot \Omega^i$, $i = 1, 2$). In the literature, cone separation theorems are derived in different ways (see, e.g., Henig \cite{Henig}, Nehse \cite{Nehse1981}, Nieuwenhuis \cite{Nieuwenhuis}). We adapt the ideas by Henig \cite{Henig} and by Kasimbeyli \cite{Kasimbeyli2010} to separate two (not necessarily convex) cones by a cone (conical surface). Consider two nontrivial cones $\Omega^1, \Omega^2 \subseteq E$ and a (closed, convex) cone $C \subseteq E$ with nonempty interior (denoted by ${\rm int}_\tau\, C$; furthermore, ${\rm bd}_\tau\, C$ denotes the boundary,  ${\rm cl}_\tau\, C$ denotes the closure of $C$). We say that the cones $\Omega^1$ and $\Omega^2$ are \\
\begin{itemize}
    \item[(i)] {\bf (weakly) separated by (the boundary of) the cone $C$} if 
    \begin{equation}
    \label{eq:weak_separation}
     \Omega^1 \subseteq {\rm cl}_\tau\, C \quad \mbox{and} \quad \Omega^2 \subseteq E \setminus {\rm int}_\tau\, C.   
    \end{equation}
    \item[(ii)] {\bf properly separated by (the boundary of) $C$} if \eqref{eq:weak_separation} is valid and
    $$ (\Omega^1 \cup \Omega^2) \setminus {\rm bd}_\tau\, C \neq \emptyset.$$
    \item[(iii)] {\bf strictly separated by (the boundary of)  $C$} if 
    $$
    \Omega^1 \setminus \{0\} \subseteq {\rm int}_\tau\, C \quad \mbox{and} \quad \Omega^2 \setminus \{0\} \subseteq E \setminus {\rm cl}_\tau\, C.
    $$
\end{itemize}
Notice that (iii) $\Longrightarrow$ (ii) $\Longrightarrow$ (i).
In particular, in our {\bf (Weak, Proper, Strict) Cone Separation Theorems} we like to consider a cone $C$ that can be written in term of a zero lower-level set of a function $\varphi: E \to \mathbb{R}$ that is (lower semi-continuous, sublinear, convex) positive homogeneous. More precisely, we like to ensure the cone representation properties 
$$
{\rm cl}_\tau\, C = \{x \in E \mid \varphi(x) \leq 0 \} \quad \mbox{and} \quad {\rm int}_\tau\, C = \{x \in E \mid \varphi(x) < 0 \}.
$$
In our paper, the function $\varphi$ will be given by the function $\varphi_{x^*, \alpha}$ defined in \eqref{f16}.
Notice that the above cone separation approach also covers the linear cone separation case where the cone $C$ is given by a closed halfspace, i.e.,
$$
C = {\rm cl}_\tau\, C = \{x \in E \mid x^*(x) \leq 0 \} \quad \mbox{and} \quad {\rm int}_\tau\, C = \{x \in E \mid x^*(x) < 0 \}.
$$
In contrast to linear cone separation where the separating object is a hyperplane ${\rm bd}_\tau\, C = \{x \in E \mid x^*(x) = 0 \}$, in nonlinear cone separation we will have a separating object given by a conical surface ${\rm bd}_\tau\, C = \{x \in E \mid \varphi(x) = 0 \}$.

 Cone separation theorems are studied by several authors in the literature, among others by Adan and Novo \cite[Th. 2.1]{AdanNovo2004}, Göpfert {\rm et al.} \cite[Th. 2.3.6]{GoeRiaTamZal2003}, Henig \cite{Henig}, Jahn \cite[Th. 3.22]{Jahn2011}, \cite[Sec. 3.7]{Jahn2023}, Kasimbeyli \cite[Th. 4.3]{Kasimbeyli2010}, Khazayel {\rm et al.} \cite[Sec. 2.3]{Khazayel2021a}, Nieuwenhuis \cite{Nieuwenhuis}, and Novo and Z\u{a}linescu \cite[Cor. 2.3]{NovoZali2021}.
Such cone separation theorems are known to be important for some fields of optimization (for instance, for deriving scalarization results for nonconvex vector optimization problems; see, e.g., {Bo{\c{t}}}, Grad and Wanka \cite{BotGradWanka}, Gerth and Weidner \cite{GerthWeidner}, Eichfelder and Kasimbeyli \cite{EichfelderKasimbeyli2014}, Göpfert {\rm et al.} \cite{GoeRiaTamZal2003}, Grad \cite{Grad2015}, Kasimbeyli \cite{Kasimbeyli2010, Kasimbeyli2013}, Kasimbeyli \cite{NKasimbeyli15, NKasimbeyli19}, Kasimbeyli {\rm et al.} \cite{Kasimbeyli2019}, and  Tammer and Weidner \cite{TammerWeidner2020}). In particular, to gain new results in the nonconvex setting (since some well-known cones are nonconvex cones) is of special interest. Clearly, also in a next step one may derive from cone separation theorems also (weak, proper and strict) separation theorems for nonconvex sets as well as duality assertions (see, e.g., Kasimbeyli and Karimi \cite{KasimbeyliKarimi2019, KasimbeyliKarimi21}).

\medskip

The paper is structured as follows. In the second section, we present preliminaries in real (topological) linear spaces. We will deal with certain topologies defined on real linear spaces; algebraic and topological notions for sets; convex sets, norms and seminorms; cones and their bases.
In the third and fourth sections, we introduce a new class of nonlinear separating functions based on a combination of a linear function and a seminorm. Generalized augmented dual cones and their properties are analyzed, and some relationships to Bishops-Phelps cones are pointed out. We are able to extend the framework used in the separation approach by Kasimbeyli \cite{Kasimbeyli2010}. 
The fifth section contains the main results of the paper, namely nonlinear cone separation results in real linear spaces, in real topological linear spaces and in real locally convex spaces, respectively. 
We end the paper with some concluding remarks in the sixth section.

\section{Preliminaries}

\subsection{Basics in Topological Linear Spaces}	
Throughout the paper, let $E \neq \{0\}$ be a real linear space, and let $E'$ be its algebraic dual space, which is given by
$$
E' = \{x': E \to \mathbb{R} \mid x' \mbox{ is linear}\}.
$$
Having a real topological linear space $(E, \tau)$ with topology $\tau$, we also consider the topological dual space of $E$, which is given by
$$
(E, \tau)^* = \{x^*: E \to \mathbb{R} \mid x^* \mbox{ is linear and } \tau \mbox{-continuous}\}.
$$
For simplicity we will write $E^*$ instead of $(E, \tau)^*$ when the underlying topology $\tau$ is clear in the context. 

\begin{remark}[Convex core topology] \label{rem:cctopology}
    It is well-known that any linear space $E$ can be endowed with
    the strongest locally convex topology $\tau_c$, that is generated by the family of all the
    semi-norms defined on $E$ (see Khan, Tammer and Z\u{a}linescu \cite[Sec. 6.3]{Khanetal2015}). In the literature, the topology $\tau_c$ is known as the convex core topology. According to \cite[Prop. 6.3.1]{Khanetal2015}, the topological dual space of $E$, namely 
    $(E, \tau_c)^*$, is exactly the algebraic dual space $E'$. 
    In recent works (see, e.g., Günther, Khazayel and Tammer \cite{Khazayel2021b, Khazayel2022}, Khazayel {\rm et al.} \cite{Khazayel2021a}, Novo and Z\u{a}linescu \cite{NovoZali2021}), the convex core topology $\tau_c$ is used to derive properties for algebraic interiority notions (such as core and intrinsic core).
\end{remark}

Throughout the paper, $\mathbb{R}_+$ denotes the set of nonnegative real numbers, while $\mathbb{P} := \mathbb{R}_{++}$ denotes the set of positive real numbers. 
For any two points $x, \overline{x} \in E$, the closed, the open, the half-open line segments are defined by
\begin{align*}
[x,\overline{x}] & := \{ (1-\lambda) x + \lambda \overline{x} \mid \lambda \in [0,1]\}, & 
(x,\overline{x}) & := \{ (1-\lambda) x + \lambda \overline{x} \mid \lambda \in (0,1)\},\\
[x,\overline{x}) & := \{ (1-\lambda) x + \lambda \overline{x} \mid \lambda \in [0,1)\}, & 
(x,\overline{x}] & := \{ (1-\lambda) x + \lambda \overline{x} \mid \lambda \in (0,1]\}.
\end{align*}
Consider any set $\Omega \subseteq  E$ in the linear space $E$. The smallest affine subspace of $E$ containing $\Omega$ is denoted by ${\rm aff}\,\Omega$. As usual, the algebraic interior (or the core) of $\Omega$ is defined by
	$$
	{\rm cor}\, \Omega := \{x \in \Omega \mid \forall\, v \in E\; \exists\, \varepsilon > 0: \; x + [0, \varepsilon] \cdot v \subseteq \Omega\},
	$$
while the relative algebraic interior (the intrinsic core) of $\Omega$ is given by
	$$
	{\rm icor}\, \Omega := \{x \in \Omega \mid \forall\, v \in {\rm aff}(\Omega - \Omega)\; \exists\, \varepsilon > 0: \; x + [0, \varepsilon] \cdot v \subseteq \Omega\}.
	$$
Notice, for any nonempty  
set $\Omega \subseteq E$, we have ${\rm  cor}\, \Omega = {\rm  icor}\, \Omega$ if ${\rm aff}\,\Omega = E$, and ${\rm  cor}\, \Omega = \emptyset$ otherwise. If ${\rm  icor}\, \Omega \neq \emptyset$, then
$
{\rm cor}\, \Omega \neq \emptyset \iff {\rm aff}\,\Omega = E.
$
The algebraic closure of $\Omega$ consists of all linearly accessible points of $\Omega$ and is denoted by
$$
{\rm acl}\, \Omega := \{x \in E \mid \exists\, \overline{x} \in \Omega:\; [\overline{x}, x) \subseteq \Omega\}.
$$

Given a real linear topological space $(E, \tau)$ and a set $\Omega \subseteq E$, we denote by ${\rm cl}_{\tau}\,\Omega$, ${\rm bd}_{\tau}\,\Omega$, ${\rm int}_{\tau}\,\Omega$ and ${\rm rint}_{\tau}\,\Omega$ the closure, the boundary, the interior and the relative interior of $\Omega$ w.r.t. the topology $\tau$. It is known that
$\Omega  \subseteq {\rm acl}\, \Omega \subseteq {\rm cl}_{\tau}\, \Omega = ({\rm int}_{\tau}\, \Omega) \cup {\rm bd}_{\tau}\, \Omega$ and
$\Omega  \supseteq {\rm cor}\, \Omega \supseteq {\rm int}_{\tau}\, \Omega$ as well as $\Omega \supseteq {\rm icor}\, \Omega  \supseteq {\rm rint}_{\tau}\, \Omega$.

Recall that a set $\Omega \subseteq E$ is called 
\begin{itemize}
\item $\tau$-closed if ${\rm cl}_{\tau}\, \Omega = \Omega$;
\item $\tau$-compact if every family of open sets (w.r.t. $\tau$) whose union includes 
$\Omega$ contains a finite number of sets whose union includes $\Omega$;
\item $\tau$-solid if ${\rm int}_{\tau}\, \Omega \neq \emptyset$;
\item algebraically closed if ${\rm acl}\, \Omega = \Omega$;
\item (algebraically) solid if ${\rm cor}\, \Omega \neq \emptyset$;
\item relatively (algebraically) solid if ${\rm icor}\, \Omega \neq \emptyset$.
\end{itemize}

It is well-known that $\tau$-closedness implies algebraic closedness. Furthermore, 
in a locally convex space $E$, $\tau$-closedness implies $\tau_c$-closedness (where $\tau_c$ is the convex core topology).

\subsection{Convex Sets, Norms and Seminorms} \label{subsec:convex_sets_norms_snorms}

As usual, a set $\Omega \subseteq E$ is said to be convex if $(x,\bar x) \subseteq \Omega$ for all $x, \bar x \in \Omega$. The smallest convex set of $E$ containing $\Omega$ (i.e., the convex hull of $\Omega$) is denoted by ${\rm conv}(\Omega)$. Notice that $\Omega$ is convex if and only if ${\rm conv}(\Omega) = \Omega$.
Consider the topological linear space $(E, \tau)$. 
If $\Omega$ is convex and ${\rm int}_\tau\, \Omega \neq \emptyset$, then ${\rm cor}\, \Omega = {\rm int}_{\tau}\, \Omega$.
According to Cuong {\rm et al.} \cite[Th. 2.9]{MordukhovichEtAl2022}, if $E$ is locally convex and $\Omega$ is convex with ${\rm rint}_\tau\, \Omega \neq \emptyset$, then ${\rm icor}\, \Omega = {\rm rint}_\tau\, \Omega.$

\begin{remark}[Convex core topology]
    It is known that, for any convex set $\Omega \subseteq E$, we have  ${\rm cor}\, \Omega = {\rm int}_{\tau_c}\, \Omega$ and ${\rm icor}\, \Omega = {\rm rint}_{\tau_c}\, \Omega$, where $\tau_c$ is the convex core topology.
	Moreover, for any relatively solid, convex set $\Omega \subseteq E$, we have 
	${\rm icor}\, \Omega = {\rm icor}( {\rm cl}_{\tau_c}\, \Omega)$ and
	${\rm acl}\, \Omega = {\rm cl}_{\tau_c}\, \Omega = {\rm cl}_{\tau_c}({\rm icor}\, \Omega)$.
\end{remark}

Special convex sets in $E$ are given by the unit balls of seminorms. 
Recall that a function $\psi: E \to \mathbb{R}$ is called a seminorm if $\psi$ is sublinear and symmetric (hence $\psi(0) = 0$). 
The unit ball of $\psi$ is then given by the set $\{x \in E \mid \psi(x) \leq 1\}$.
Moreover, recall that a function $\psi: E \to \mathbb{R}$ is called a norm if $\psi$ is a seminorm and the implication $\psi(x) = 0 \Longrightarrow x = 0$ holds true.

In the following examples, we like to point out how one can construct a (semi)norm in a linear space $E$ (see also Tammer and Weidner \cite[Sec. 2.1]{TammerWeidner2020}).

\begin{example} \label{ex:seminorm1} Assume that $E$ is a real topological linear space and $x^* \in E^*$. Then, the function $\psi_{x^*}: E \to \mathbb{R}$, defined by 
$
\psi_{x^*}(x) := |x^*(x)|
$
for all $x \in E$,
defines a seminorm on $E$. If the dimension of $E$ is greater than $1$, then $x^*(x) = 0$ has nontrivial solutions, hence $\psi$ is not a norm.
\end{example}

\begin{example} \label{ex:seminorm1b} Given a nonempty, convex, absorbing, balanced set $B \subseteq E$ in the linear space $E$, one can define a seminorm $\psi: E \to \mathbb{R}$ by using the so-called Minkowski functional, which is defined by
$
\psi(x) := \inf\{\lambda > 0 \mid x \in \lambda \cdot B\}
$
for all $x \in E$.
\end{example}

\begin{example} \label{ex:seminorm2}
Assume that $E$ is a linear space and $\psi : E \to \mathbb{R}$ is sublinear. Then
each of the following functions is a seminorm on $E$:

$\psi_{\max}: E \to \mathbb{R}$ defined by
$
\psi_{\max}(x) := \max\{\psi(x), \psi(-x)\} 
$
for all $x \in E$;

$\psi_{\sum}: E \to \mathbb{R}$ defined by
$
\psi_{\sum}(x) := \psi(x) + \psi(-x) 
$
for all $x \in E$.
\end{example}

\begin{remark} 
In Lemma \ref{lem:semnorm_max_properties} we will state some more properties of $\psi_{\max}$.
\end{remark}

We will denote by the tripel $(E, \mathcal{F}, \tau)$ a real locally convex space $E$ with underlying topology $\tau$ that is generated by a family of seminorms $\mathcal{F}$. As mentioned in Remark \ref{rem:cctopology},  $(E, \mathcal{F}, \tau_c)$ with $\mathcal{F}$ consisting of all seminorms defined on $E$, and the convex core topology $\tau_c$, is a real locally convex space.

\begin{example} \label{ex:seminorm3}
We suppose that $(E, \mathcal{F}, \tau)$ is a separated locally convex space with a topology $\tau$ that is generated by a family $\mathcal{F} := \{s^i \mid i \in I\}$ of seminorms $s^i: E \to \mathbb{R}$, $i \in I$, with $\sup\{s^i(x) \mid i \in I\} < +\infty$ for all $x \in E$ (for instance, if $|I| < \infty$). Then, $\psi_{\mathcal{F}}: E \to \mathbb{R}$, defined by 
$
\psi_{\mathcal{F}}(x) := \sup\{s^i(x) \mid i \in I\}
$
for all $x \in E$,
is a norm on $E$. Notice, for any $y \in E \setminus \{0\}$ there is $i \in I$ such that $s^i(y) > 0$ (see Swartz \cite[pp. 169-170]{Swartz}). 
\end{example}

\subsection{Cones and their Bases} \label{subsec:cones}

Recall that a cone $K \subseteq E$ (i.e., $0 \in K = \mathbb{R}_+ \cdot K$) is convex if $K$ is convex (or equivalently $K + K = K$); nontrivial if $\{0\} \neq K \neq E$;
pointed if $\ell(K) : = K \cap (-K) = \{0\}$. Notice that $\ell(K) \subseteq K \subseteq {\rm aff}\,K$, and $K$ is a linear subspace of $E$ if and only if $K = \ell(K)$ and $K$ is convex.

For any cone $K \subseteq E$, we define its (algebraic) dual cone by
$$
K^+ := \{y' \in E' \mid \forall\, k \in K:\; y'(k) \geq 0\}.
$$
Moreover, the following two sets will be of special interest,
\begin{align*}
K^\# & := \{y' \in E' \mid \forall\, k \in K \setminus \{0\}:\; y'(k) > 0\},\\
K^\& & := \{y' \in E' \mid \forall\, k \in K \setminus \ell(K):\; y'(k) > 0\}.
\end{align*}
Obviously, we have $K^\# \subseteq K^+$ and $K^\# \subseteq K^\&$. If $K \subseteq {\rm acl}(K \setminus \ell(K))$, then $K^\# \subseteq K^\& \subseteq K^+$.

In a real topological linear space $(E, \tau)$, we define the (topological) dual cone of a cone $K \subseteq E$ by
$$
K^+_\tau := K^+ \cap E^*.
$$
Moreover, we put
$$
K^\#_\tau  := K^\# \cap E^* \quad \mbox{and} \quad 
K^\&_\tau := K^\& \cap E^*.
$$
Clearly, we have $K_\tau^\# \subseteq K_\tau^+$ and $K_\tau^\# \subseteq K_\tau^\&$. If $K \subseteq {\rm cl}_\tau(K \setminus \ell(K))$, then $K_\tau^\# \subseteq K_\tau^\& \subseteq K_\tau^+$. 
All sets $K^+, K^\#, K^\&$ and $K^+_\tau, K^\#_\tau, K^\&_\tau$ are convex for any (not necessarily convex) cone $K \subseteq E$.
Notice that $K^+_{\tau_c} = K^+$, $K^{\#}_{\tau_c} = K^{\#}$ and $K^{\&}_{\tau_c} = K^{\&}$.
Since
$
\ell(K^+) \cap E^* = \ell(K_\tau^+)
$
we have
$K_\tau^+\setminus \ell(K_\tau^+) = (K^+\setminus \ell(K^+)) \cap E^*.$

\begin{lemma} \label{lem:K+AndicorK}
Consider a real linear space $E$ and a nontrivial cone $K \subseteq E$. Then, 
\begin{itemize}
	\item[$1^\circ$] If $K$ is relatively solid, then 
    \begin{align*}
    K^+ \setminus \ell(K^+) & = \{y' \in K^+ \setminus \{0\} \mid \exists\, k \in {\rm icor}\, K:\; y'(k) > 0\}\\
    & \subseteq \{y' \in E' \mid \forall\, k \in {\rm icor}\, K:\; y'(k) > 0\},
    \end{align*}
    and if further $K \subseteq {\rm acl}({\rm icor}\, K)$, then the last inclusion is an equality.
    \item[$2^\circ$] If $K$ is solid, then $K^+$ is pointed and
    \begin{align*}
      K^+ \setminus \{0\}   & = \{y' \in K^+ \setminus \{0\} \mid \exists\, k \in {\rm cor}\, K:\; y'(k) > 0\}\\
      & \subseteq \{y' \in E' \mid \forall\, k \in {\rm cor}\, K:\; y'(k) > 0\},
    \end{align*}
    and if  further  $K \subseteq {\rm acl}({\rm cor}\, K)$, then the last inclusion is an equality.
    \item[$3^\circ$] Assume that $(E, \tau)$ is a real topological linear space. If $K$ is relatively solid, then
    \begin{align*}
     K_\tau^+\setminus \ell(K_\tau^+) & = \{y^* \in K_\tau^+ \setminus \{0\} \mid \exists\, k \in {\rm icor}\, K:\; y^*(k) > 0\}\\
     & \subseteq \{y^* \in E^* \mid \forall\, k \in {\rm icor}\, K:\; y^*(k) > 0\},   
    \end{align*}
    and if further $K \subseteq {\rm cl}_\tau({\rm icor}\, K)$, then the last inclusion is an equality.
\end{itemize}
\end{lemma}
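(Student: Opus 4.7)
The plan is to extract all three statements from a single technical observation: if $y' \in K^+$ and there exists $\tilde{k} \in {\rm icor}\, K$ with $y'(\tilde{k}) = 0$, then $y'(k) = 0$ for every $k \in K$, i.e., $y' \in \ell(K^+)$. To prove this sublemma, for arbitrary $k \in K$ I set $v := -k = 0 - k \in K - K \subseteq {\rm aff}(K - K)$; the definition of ${\rm icor}\, K$ yields $\varepsilon > 0$ with $\tilde{k} - tk \in K$ for all $t \in [0, \varepsilon]$, and evaluating $y' \in K^+$ at such a point gives $0 \leq y'(\tilde{k}) - t\, y'(k) = -t\, y'(k)$, so $y'(k) \leq 0$; combined with $y'(k) \geq 0$ this forces $y'(k) = 0$. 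The argument needs no convexity of $K$.

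From the sublemma, Part $1^\circ$ is essentially formal. For the equality $K^+ \setminus \ell(K^+) = \{y' \in K^+ \setminus \{0\} \mid \exists k \in {\rm icor}\, K:\; y'(k) > 0\}$, the direction $\supseteq$ is immediate, and for $\subseteq$ any $y' \in K^+ \setminus \ell(K^+)$ satisfies, by the contrapositive of the sublemma, $y'(\tilde{k}) \neq 0$ for every $\tilde{k} \in {\rm icor}\, K$ (nonempty by hypothesis), hence $y'(\tilde{k}) > 0$. The inclusion into $\{y' \in E' \mid \forall k \in {\rm icor}\, K:\; y'(k) > 0\}$ is another direct application of the contrapositive: positivity of $y' \in K^+$ at one point of ${\rm icor}\, K$ forces positivity at every point. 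The equality under $K \subseteq {\rm acl}({\rm icor}\, K)$ reduces to verifying that every $y' \in E'$ with $y'|_{{\rm icor}\, K} > 0$ lies in $K^+$: given $k \in K$, pick $\bar{k} \in {\rm icor}\, K$ with $[\bar{k}, k) \subseteq {\rm icor}\, K$, so $(1-t)\, y'(\bar{k}) + t\, y'(k) > 0$ for all $t \in [0, 1)$, and the limit $t \to 1^{-}$ yields $y'(k) \geq 0$.

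Parts $2^\circ$ and $3^\circ$ are then specializations. Part $2^\circ$: solidness of $K$ forces ${\rm span}\, K = E$ (pick $x \in {\rm cor}\, K$; for every $v \in E$, $x + \varepsilon v \in K$ for some $\varepsilon > 0$, so $v \in {\rm span}(K - x) \subseteq {\rm span}\, K$), whence $\ell(K^+) = \{0\}$, $K^+$ is pointed, and ${\rm icor}\, K = {\rm cor}\, K$; the assertion then reduces to Part $1^\circ$. Part $3^\circ$ follows from Part $1^\circ$ by intersecting with $E^*$, using the identity $K_\tau^+ \setminus \ell(K_\tau^+) = (K^+ \setminus \ell(K^+)) \cap E^*$ already recorded in the excerpt; the final equality under $K \subseteq {\rm cl}_\tau({\rm icor}\, K)$ mirrors the ${\rm acl}$-argument of Part $1^\circ$, with $\tau$-continuity of $y^* \in E^*$ applied along a net in ${\rm icor}\, K$ $\tau$-converging to $k \in K$ replacing the segment-limit computation.

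The only real content is the sublemma; the main pitfall to resist is the temptation to invoke convexity of $K$ in order to establish $[\tilde{k}, k_0] \subseteq {\rm icor}\, K$ for some $k_0$ with $y'(k_0) > 0$. Exploiting instead the direction $-k \in {\rm aff}(K - K)$ sidesteps convexity entirely and keeps the statement valid for not-necessarily-convex cones.
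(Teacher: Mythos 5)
Your proof is correct and follows essentially the same route as the paper: the core of the argument is the observation that a functional in $K^+$ vanishing at one point of ${\rm icor}\,K$ must vanish on all of $K$ (your sublemma is the paper's computation with the direction $\bar k$, streamlined to the single direction $-k$ since $y'(k)\ge 0$ is already known), after which $1^\circ$ is formal, $2^\circ$ reduces to $1^\circ$ via ${\rm aff}\,K=E$, and $3^\circ$ follows by intersecting with $E^*$ and replacing the segment limit by continuity along a net. No gaps.
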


\begin{proof}
\begin{itemize}
	\item[$1^\circ$]
	First, let us check that
	\begin{align*}
	   \ell(K^+) & = \{y' \in E' \mid \forall\, k \in K:\; y'(k) = 0\} \\
	   & = \{y' \in K^+ \mid \forall\, k \in {\rm icor}\, K:\; y'(k) = 0\} \\
	   & = \{y' \in K^+ \mid \exists\, k \in {\rm icor}\, K:\; y'(k) = 0\}.
	\end{align*}
	The first equality and further inclusions of type ``$\subseteq$'' are obvious.  Take some $y' \in K^+$ such that $y'(k) = 0$ for some $k \in {\rm icor}\, K$. On the contrary, assume that there is $\bar k \in K$ with $y'(\bar k) \neq 0$. Due to $k \in {\rm icor}\, K$ and $\bar k, -\bar k \in {\rm aff}\, K = {\rm aff}(K - K)$ there is $\varepsilon > 0$ such that $k + [-\varepsilon, \varepsilon] \bar k \subseteq K$. Then, we get $y'(k + \delta \bar k ) = \delta y'(\bar k) < 0$ for an adequate $\delta \in [-\varepsilon, \varepsilon]$, a contradiction to $y' \in K^+$.  	Thus, we conclude 
	\begin{align*}
	  K^+ \setminus \ell(K^+) 
	   & = \{y' \in K^+ \setminus \{0\} \mid \exists\, k \in K:\; y'(k) > 0\} \\
	   & = \{y' \in K^+ \setminus \{0\} \mid \exists\, k \in {\rm icor}\, K:\; y'(k) > 0\} \\
	   & = \{y' \in K^+ \setminus \{0\}  \mid \forall\, k \in {\rm icor}\, K:\; y'(k) > 0\}\\
	   & = \{y' \in E'  \mid \forall\, k \in {\rm icor}\, K:\; y'(k) > 0\} \cap K^+.
	\end{align*}

	Assume that $K \subseteq {\rm acl}({\rm icor}\, K)$. Then, 
	$
	\{y' \in E' \mid \forall k \in {\rm icor}\, K:\; y'(k) > 0\} \subseteq K^+.
	$
	Indeed, for $y' \in E'$ with $y'(k) > 0$ for all $k \in {\rm icor}\, K \neq \emptyset$ one has $y'(k) \geq 0$ for all $k \in {\rm acl}({\rm icor}\, K) \supseteq K$, i.e., $y' \in K^+$.
	We conclude the validity of the assertion $1^\circ$.
	
    \item[$2^\circ$] Let us show that the solidness of $K$ implies the pointedness of $K^+$. Assume that ${\rm cor}\, K \neq \emptyset$. 
    On the contrary, assume that $\ell(K^+) \neq \{0\}$, i.e., there is $x' \neq 0$ such that $x'(x) = 0$ for all $x \in K$. Then, by $x' \neq 0$ there is $y \in E \setminus K$ with $x'(y) \neq 0$. Pick now $k \in {\rm cor}\, K$. Then, there is $\varepsilon > 0$ such that $k + \varepsilon y \in K$. Thus, we arrive at the contradiction
    $0 = x'(k + \varepsilon y) = x'(k) + \varepsilon x'(y) = \varepsilon x'(y) \neq 0.$
    
    Now, it is easy to see that assertion $2^\circ$ is a direct consequence of assertion  $1^\circ$.
    \item[$3^\circ$]
    The first part of the assertions follows by assertion $1^\circ$ and the fact that $K_\tau^+\setminus \ell(K_\tau^+) = (K^+\setminus \ell(K^+)) \cap E^*$. The proof of the second part is similar to the proof in assertion $1^\circ$ (use $K \subseteq {\rm cl}_\tau({\rm icor}\, K)$ in the topological setting).
\end{itemize}
\end{proof}

 \begin{remark}
 Notice,  Lemma \ref{lem:K+AndicorK} ($1^\circ$) generalizes a result by Khazayel {\rm et al.} \cite[Th. 4.8, Rem. 5.8]{Khazayel2021a} since for a relatively solid, convex cone $K \subseteq E$ one has $K \subseteq {\rm acl}\, K = {\rm acl}({\rm icor}\, K)$ (see  \cite[Lem. 2.5]{Khazayel2021a}).
 \end{remark}

Now, we define a general base concept for cones in linear spaces.

\begin{definition} \label{def:baseKnowngeneral}
	A set $B \subseteq K \setminus \{0\}$ is called a {base} for the cone $K \subseteq E$, if
	$B$ is a nonempty set,  and every $x \in K \setminus \{0\}$ has a unique representation of the form 
	$
	x = \lambda b \mbox{ for some } \lambda > 0 \mbox{ and some } b \in B.
	$
\end{definition}

\begin{remark}
If $B$ is base in the sense of Definition \ref{def:baseKnowngeneral} for $K$, then $K = \mathbb{R}_+ \cdot B$. Notice, a convex base $B$ in the sense of Definition \ref{def:baseKnowngeneral} is used by Jahn \cite[Def. 1.10]{Jahn2011} as a base concept for convex cones.
\end{remark}

In our paper, we will work with the following examples of (not necessarily convex) bases for cones.

\begin{example} \label{def:normbase}
	Assume that $||\cdot||: E \to \mathbb{R}$ is a norm, and $K \neq \{0\}$. A set $B \subseteq K$ given by
	$
	B := \{x \in K \mid ||x|| = 1\}
	$
	is a base in the sense of Definition \ref{def:baseKnowngeneral} for $K$, and is called
	{norm-base} for $K$.
\end{example}

\begin{example} \label{def:seminormbase}
	Assume that $\psi: E \to \mathbb{R}$ is a seminorm. Consider the cone $\tilde{K} := \{x \in K \mid \psi(x) = 0\} = K \cap K_\psi,$
	where
	$
	K_\psi := \{x \in E \mid \psi(x) = 0\}.
	$
	Notice that $K_\psi$ is a convex cone.
	A nonempty set $B \subseteq K$ given by
	$
	B := \{x \in K \mid \psi(x) = 1\}
	$
	is a base in the sense of Definition \ref{def:baseKnowngeneral} for the cone $(K \setminus \tilde{K}) \cup \{0\} = (K \setminus K_\psi) \cup \{0\}$. In general, we have 	
	$
	K = (\mathbb{R}_+ \cdot B) \cup \tilde{K}.
	$ 
	In such a situation we call $B$ a {seminorm-base} for $K$.
\end{example}
	
\begin{example} \label{def:normlikebase}	
	Assume that $\psi: E \to \mathbb{R}$ is a seminorm but not necessarily a norm, and  $K \neq \{0\}$. 	If $\psi$ is positive on $K \setminus \{0\}$ (i.e., $\psi$ behaves on $K$ like a norm), then $\tilde{K} = \{0\}$ and $K = \mathbb{R}_+ \cdot   B$, and so
	$
	B = \{x \in K \mid \psi(x) = 1\}
	$
	is a base in the sense of Definition \ref{def:baseKnowngeneral} for $K$. Then, $B$ is called
	{normlike-base} for $K$. 
\end{example}

\begin{lemma} \label{lem:K=convB}
Assume that a cone $K  \subseteq E$ is generated by some nonempty set $B \subseteq E$, i.e., $K = \mathbb{R}_+ \cdot B$. Then, the following assertions hold:
\begin{itemize}
	\item[$1^\circ$] If $K$ is convex, then $K = \mathbb{R}_+ \cdot {\rm conv}(B)$. 
	\item[$2^\circ$] If ${\rm conv}(B)$ is solid, then ${\rm cor}\, K \subseteq \mathbb{P} \cdot {\rm cor}({\rm conv}(B))$.
	\item[$3^\circ$] If $K$ is convex, then ${\rm cor}\, K \supseteq \mathbb{P} \cdot {\rm cor}({\rm conv}(B))$.
	\item[$4^\circ$] $K = \mathbb{R}_+ \cdot (K \cap B)$.
	\item[$5^\circ$] If $K \neq E$, then ${\rm cor}\, K = \mathbb{P} \cdot (({\rm cor}\, K ) \cap B)$.
	\item[$6^\circ$] $K \setminus \ell(K) = \mathbb{P} \cdot ((K \setminus \ell(K)) \cap B) = \mathbb{P} \cdot ((E \setminus (- K)) \cap B)$.
	\item[$7^\circ$] $K \setminus \{0\} = \mathbb{P} \cdot (B \setminus\{0\}) = \mathbb{P} \cdot ((K \setminus\{0\}) \cap B)$, and if $0 \notin B$, then $K \setminus \{0\} = \mathbb{P} \cdot B$.
\end{itemize}	
\end{lemma}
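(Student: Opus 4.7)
The strategy is to exploit the two identities $K = \mathbb{R}_+ \cdot K$ and $K = \mathbb{R}_+ \cdot B$, which together give $B \subseteq K$ and allow ``reverse-scaling'': from any representation $x = \lambda b$ with $\lambda > 0$ one immediately gets $b = x/\lambda \in K$ again. Most of the assertions reduce to this bookkeeping; the real work lies in assertion $2^\circ$.

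Assertion $1^\circ$ follows by observing that $B \subseteq {\rm conv}(B)$ gives ``$\subseteq$'', while $B \subseteq K$ together with convexity of $K$ yields ${\rm conv}(B) \subseteq K$ and hence $\mathbb{R}_+ \cdot {\rm conv}(B) \subseteq K$. Assertion $4^\circ$ (and, with obvious modifications, $6^\circ$ and $7^\circ$) uses the reverse-scaling principle: from $x = \lambda b \in K$ with $\lambda > 0$ one concludes $b \in K$, hence $b \in K \cap B$ (respectively $b \in (K \setminus \ell(K)) \cap B$ or $b \in (K \setminus \{0\}) \cap B = B \setminus \{0\}$, using $B \subseteq K$); the alternative form $(E \setminus (-K)) \cap B$ appearing in $6^\circ$ coincides with $(K \setminus \ell(K)) \cap B$ because $B \subseteq K$ and $\ell(K) = K \cap (-K)$. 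For $5^\circ$: the set ${\rm cor}\, K$ is stable under multiplication by elements of $\mathbb{P}$ (direct from the core definition and $\mu K = K$ for $\mu > 0$), and $K \neq E$ precludes $0 \in {\rm cor}\, K$; hence every $x \in {\rm cor}\, K$ lies in $K \setminus \{0\}$ and decomposes as $x = \lambda b$ with $\lambda > 0$, so that $b = x/\lambda \in {\rm cor}\, K \cap B$. Assertion $3^\circ$ is a scaling argument: given $c \in {\rm cor}({\rm conv}(B))$, $\mu > 0$, and $v \in E$, pick $\varepsilon' > 0$ with $c + [0, \varepsilon'](v/\mu) \subseteq {\rm conv}(B)$, multiply by $\mu$, and invoke $\mathbb{R}_+ \cdot {\rm conv}(B) = K$ (from $1^\circ$, since $K$ is convex).

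The crux is $2^\circ$. Write $C := {\rm conv}(B)$ and $D := \mathbb{R}_+ \cdot C$; then $D$ is a convex cone with $K \subseteq D$, so ${\rm cor}\, K \subseteq {\rm cor}\, D$, and it suffices to show ${\rm cor}\, D \subseteq \mathbb{P} \cdot {\rm cor}\, C$. For $x \in {\rm cor}\, D \setminus \{0\}$, fix any $c_0 \in {\rm cor}\, C$ and apply the core property of $x$ in the direction $-c_0$ to obtain $\delta > 0$ with $x - \delta c_0 \in D$, say $x - \delta c_0 = \mu' c'$ for some $\mu' \geq 0$, $c' \in C$. If $\mu' = 0$, then $x = \delta c_0 \in \mathbb{P} \cdot {\rm cor}\, C$; otherwise
\[
x \;=\; \mu' c' + \delta c_0 \;=\; (\mu' + \delta)\Bigl[\tfrac{\delta}{\mu' + \delta}\, c_0 + \tfrac{\mu'}{\mu' + \delta}\, c'\Bigr]
\]
displays $x$ as a positive multiple of a point on the open segment $(c_0, c')$, which lies in ${\rm cor}\, C$ by the standard line-segment principle (any convex combination $(1-t)c_0 + t c'$ with $c_0 \in {\rm cor}\, C$, $c' \in C$, $t \in [0,1)$ belongs to ${\rm cor}\, C$). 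The case $x = 0$ is treated separately: $0 \in {\rm cor}\, K$ forces $K = E$, hence $\mathbb{R}_+ \cdot B = E$, and then for any $b \in B$ the element $-b$ admits a representation $-b = \lambda b_1$ with $\lambda \geq 0$, $b_1 \in B$, which after rearrangement produces either $b = 0$ or a nontrivial convex combination of $b, b_1$ equal to zero; in either case $0 \in C$. A final short verification (for any $v \in E$ write $v = t c$ with $c \in C$, $t \geq 0$, and use $[0, c] \subseteq C$) shows $0 \in {\rm cor}\, C$, so $x = 0 = 1 \cdot 0 \in \mathbb{P} \cdot {\rm cor}\, C$. The main obstacle throughout the proof is precisely this case analysis in $2^\circ$ (the subcases $\mu' = 0$ vs.\ $\mu' > 0$ and $x = 0$ vs.\ $x \neq 0$) together with a clean invocation of the line-segment principle for convex sets in a purely linear setting; all other items are essentially definition-chasing.
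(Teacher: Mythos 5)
Your proof is correct, and items $1^\circ$, $3^\circ$--$7^\circ$ coincide with the paper's own definition-chasing arguments. The interesting divergence is in $2^\circ$: the paper argues by contradiction via a Hahn--Banach type separation theorem, assuming $(\mathbb{P}\cdot\{\bar x\})\cap{\rm cor}({\rm conv}(B))=\emptyset$ for some $\bar x\in{\rm cor}\,K$, separating the convex cones $\mathbb{R}_+\cdot{\rm cor}({\rm conv}(B))$ and $\mathbb{R}_+\cdot\{\bar x\}$ by a linear functional, deducing $x'\in K^+\setminus\{0\}$ from $x'\geq 0$ on $B$, and then invoking Lemma~\ref{lem:K+AndicorK} ($2^\circ$) to force $x'(\bar x)>0$, contradicting the separation inequality. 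You instead give a direct, elementary argument: pass to the convex cone $D=\mathbb{R}_+\cdot{\rm conv}(B)\supseteq K$, use the core property of $x$ in the direction $-c_0$ for a fixed $c_0\in{\rm cor}({\rm conv}(B))$, and read off $x$ as a positive multiple of a point of $[c_0,c')$, which lies in ${\rm cor}({\rm conv}(B))$ by the line-segment principle; the degenerate cases ($\mu'=0$, $x=0$) are handled cleanly. Your route is more self-contained --- it needs neither the separation theorem of Khazayel {\rm et al.} nor Lemma~\ref{lem:K+AndicorK} --- and it actually proves the slightly stronger inclusion ${\rm cor}(\mathbb{R}_+\cdot C)\setminus\{0\}\subseteq\mathbb{P}\cdot{\rm cor}\,C$ for any solid convex set $C$; the paper's route has the advantage of reusing machinery already developed for the dual-cone results, which is thematically natural in a paper about separation. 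Both are valid; yours is arguably the cleaner proof of this particular lemma.
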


\begin{proof}
\begin{itemize}
	\item[$1^\circ$] This equality follows easily by the fact that $K$ is a convex cone.
	\item[$2^\circ$] Let ${\rm conv}(B)$ be solid. Take some $\bar x \in {\rm cor}\, K$. Assume by the contrary that $\left(\mathbb{P} \cdot \{\bar x\}\right) \cap {\rm cor}({\rm conv}(B)) = \emptyset$.
	Clearly, then $\left(\mathbb{P} \cdot \{\bar x\}\right) \cap \Omega^1 = \emptyset$ for $\Omega^1 := \mathbb{R}_+ \cdot ({\rm cor}({\rm conv}(B)))$. Thus, for the solid, convex cone $\Omega^1$ and the relatively solid, convex cone $\Omega^2 := \mathbb{R}_+ \cdot \{\bar x\}$ (with ${\rm icor}\, \Omega^2 = \mathbb{P} \cdot \{\bar x\}$) we have $\left({\rm icor}\, \Omega^2\right) \cap ({\rm cor}\, \Omega^1) = \emptyset$.  By the separation theorem in \cite[Cor. 2.28]{Khazayel2021a} there is $x' \in E' \setminus \{0\}$ such that
	$
	x'(\omega^2) \leq 0 \leq x'(\omega^1)$ for all $\omega^2 \in {\rm icor}\, \Omega^2$ and $\omega^1 \in {\rm cor}\,\Omega^1$.
    This also implies
	\begin{equation}
	\label{eq:xbarleq0}   
	x'(\bar x) \leq 0 \leq x'(\omega^1)\quad \mbox{for all } \omega^1 \in {\rm acl}({\rm cor}\,\Omega^1).
    \end{equation}
	  Since ${\rm acl}({\rm cor}\,\Omega^1) =  {\rm acl}\, \Omega^1 \supseteq {\rm acl}({\rm cor}\,({\rm conv}(B))) = {\rm acl}({\rm conv}(B))  \supseteq B$,
	it follows from \eqref{eq:xbarleq0} that $x'(b) \geq 0$ for all $b \in B$. Using the assumption that $K = \mathbb{R}_+ \cdot B$ we infer  $x'(k) \geq 0$ for all $k \in K$, i.e.,
	$x' \in K^+ \setminus \{0\}$. By Lemma \ref{lem:K+AndicorK} ($2^\circ$) it follows that $x'(x) > 0$ for all $x \in {\rm cor}\, K$. In particular, $x'(\bar x) > 0$ for the given $\bar x \in {\rm cor}\, K$, a contradiction to \eqref{eq:xbarleq0}.
	\item[$3^\circ$] First, notice that ${\rm conv}(B) \subseteq K$, hence ${\rm cor}({\rm conv}(B)) \subseteq {\rm cor}\, K$. Due to $\mathbb{P} \cdot {\rm cor}\, K = {\rm cor}\, K$ we get the desired inclusion.
	\item[$4^\circ$] Follows by the fact that $K \cap B = B$.
	\item[$5^\circ$] The inclusion ``$\supseteq$'' is obvious taking into account that $\mathbb{P} \cdot {\rm cor}\, K = {\rm cor}\, K$.
	Let us show the reverse inclusion ``$\subseteq$''. Take some $x \in {\rm cor}\, K$. Then, due to $K = \mathbb{R}_+ \cdot B$ there is
	$b \in B$ and $t \geq 0$ such that $x = t b$. Because $K \neq E$ (i.e., $0 \notin {\rm cor}\, K$) we have $t > 0$. Clearly, $\frac{1}{t} x \in B$ and due to $\mathbb{P} \cdot {\rm cor}\, K = {\rm cor}\, K$ we also get $\frac{1}{t} x \in {\rm cor}\, K$. Thus, we get 
	$
	x \in t \cdot (({\rm cor}\, K ) \cap B) \subseteq \mathbb{P} \cdot (({\rm cor}\, K ) \cap B).
	$
	\item[$6^\circ$] Since $B \subseteq K$ we get $E \setminus (- K) \cap B = (K \setminus \ell(K))  \cap B \subseteq K \setminus \ell(K)$. Further it is known that $\mathbb{P} \cdot (K \setminus \ell(K)) = K \setminus \ell(K)$. Thus, the proof is similar to $5^\circ$.
	\item[$7^\circ$] The proof is similar to $5^\circ$ taking into account that $\mathbb{P} \cdot (K \setminus \{0\}) = K \setminus \{0\}$.
\end{itemize}	
\end{proof}

The next lemma provides some more details on the construction of a (semi)norm and gives some ideas for the construction of bases for nontrivial cones.

\begin{lemma} \label{lem:semnorm_max_properties}
	Consider a function $\psi: E \to \mathbb{R}$, a function $\psi_{\max}: E \to \mathbb{R}$ defined by
	$
	\psi_{\max}(y) := \max\{\psi(y), \psi(-y)\}
	$
	for all $y \in E$, and assume that $K, A \subseteq E$ are nontrivial cones.
	Then, the following assertions hold:
	\begin{itemize}
		\item[$1^\circ$] If $\psi$ is positive homogeneous and subadditive (i.e., $\psi$ is sublinear), then $\psi_{\max}$ is a seminorm.
		\item[$2^\circ$] Assume that $\psi$ satisfies $-K = \{x \in E \mid \psi(x) \leq 0\}$. Then,  $\ell(K) = \{y \in E \mid \psi_{\max}(y) \leq 0\}.$
		Moreover, we have:
		
		$\psi_{\max}$ is nonnegative on $E$ $\iff$ $\ell(K) = \{y \in E \mid \psi_{\max}(y) = 0\}$.
		
		$\psi_{\max}$ is positive on $E \setminus \{0\}$ $\iff$  $K$ is pointed. 
		
		\item[$3^\circ$] Assume that $\psi$ satisfies $-{\rm icor}\, K = \{x \in E \mid \psi(x) < 0\}$, and  $0 \notin {\rm icor}\, K$ (e.g., if $K$ is convex and $K \neq \ell(K)$). Define $P_0 := ({\rm icor}\, K) \cup \{0\}$. Then, $\ell(P_0) \setminus \{0\} = \{y \in E \mid \psi_{\max}(y) < 0\}.$ 
		Furthermore, we have:
		
		$\psi_{\max}$ is nonnegative on $E$ $\iff$  $P_0$ is pointed.  
		
		\item[$4^\circ$] Assume that $\psi$ is sublinear and satisfies $-K = \{x \in E \mid \psi(x) \leq 0\}$. Then, $\psi_{\max}$ is a seminorm. Moreover, we have:
		
		$\ell(K) \cap A = \{0\} \iff$ $B_A = \{y \in A \mid \psi_{\max}(y) = 1\}$ is a normlike-base for $A$.\
		
		$K \neq \ell(K)\iff$ $B_K = \{y \in K \mid \psi_{\max}(y) = 1\}$ is a seminorm-base for $K$.
		
		Furthermore, the following assertions are equivalent:
		(i) $K$ is pointed; (ii) $\psi_{\max}$ is a norm; (iii) $B_K$ is a norm-base for $K$; (iv) $B_K$ is a normlike-base for $K$.
	\end{itemize}
\end{lemma}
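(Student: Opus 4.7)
My plan is to work through the four assertions in order, with the observation that once $1^\circ$ is in place and the key identification $\{y\in E \mid \psi_{\max}(y)\le 0\} = \ell(K)$ is established, parts $2^\circ$--$4^\circ$ reduce to translating set equalities via the definition of $\psi_{\max}$.

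For $1^\circ$, I would check the seminorm axioms directly from the definition. Symmetry is immediate since swapping $y$ and $-y$ inside the max does nothing. For homogeneity, I would split on the sign of $\lambda$: positive homogeneity of $\psi$ gives $\psi_{\max}(\lambda y)=\lambda\,\psi_{\max}(y)$ for $\lambda\ge 0$, and for $\lambda<0$ one again swaps the two arguments of the max to obtain $|\lambda|\,\psi_{\max}(y)$. Subadditivity follows from the standard maximum-of-sums estimate: $\max\{\psi(x+y),\psi(-x-y)\}\le\max\{\psi(x)+\psi(y),\psi(-x)+\psi(-y)\}\le\psi_{\max}(x)+\psi_{\max}(y)$.

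For $2^\circ$, the key point is that $\psi_{\max}(y)\le 0$ iff both $\psi(y)\le 0$ and $\psi(-y)\le 0$, which by hypothesis means $-y\in -K$ and $y\in -K$, i.e., $y\in K\cap(-K)=\ell(K)$. From this identity both refinements follow: the equivalence involving nonnegativity of $\psi_{\max}$ comes from replacing $\le 0$ by $=0$, and the equivalence ``$\psi_{\max}>0$ on $E\setminus\{0\}$ $\iff$ $K$ pointed'' is just the statement that $\ell(K)=\{0\}$. For $3^\circ$ I would argue analogously with strict inequalities: $\psi_{\max}(y)<0$ iff $y,-y\in\mathrm{icor}\,K$; combined with $0\notin\mathrm{icor}\,K$ this forces $y\neq 0$ and $y,-y\in P_0$, i.e., $y\in\ell(P_0)\setminus\{0\}$. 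The nonnegativity equivalence then reads ``no $y$ satisfies $\psi_{\max}(y)<0$'' $\iff$ $\ell(P_0)\setminus\{0\}=\emptyset$ $\iff$ $P_0$ is pointed.

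For $4^\circ$, $\psi_{\max}$ is a seminorm by $1^\circ$, and the characterisation from $2^\circ$ gives $\psi_{\max}(y)=0 \iff y\in\ell(K)$ on all of $E$. For the normlike-base claim I would show both directions: if $\ell(K)\cap A=\{0\}$, then $\psi_{\max}>0$ on $A\setminus\{0\}$, so every $y\in A\setminus\{0\}$ has the unique representation $y=\psi_{\max}(y)\cdot(y/\psi_{\max}(y))$ with $y/\psi_{\max}(y)\in B_A$, matching Example~\ref{def:normlikebase}; conversely, if $B_A$ is a normlike-base then $\psi_{\max}$ is positive on $A\setminus\{0\}$, so $A\setminus\{0\}$ is disjoint from $\ell(K)$. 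The seminorm-base equivalence works the same way, but using that $\tilde K = K\cap K_{\psi_{\max}} = K\cap\ell(K)=\ell(K)$, and noting $B_K\neq\emptyset$ precisely when some $y\in K$ has $\psi_{\max}(y)>0$, i.e., when $K\neq\ell(K)$. For the four-way equivalence I would close the cycle: (i)$\Leftrightarrow$(ii) from the last part of $2^\circ$; (ii)$\Leftrightarrow$(iii) from Example~\ref{def:normbase}; (i)$\Rightarrow$(iv) by applying the normlike-base equivalence with $A:=K$; and (iv)$\Rightarrow$(i) from the observation that positivity of $\psi_{\max}$ on $K\setminus\{0\}$ forces $K\cap\ell(K)=\{0\}$.

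The only delicate point I anticipate is keeping the three base notions (norm-base, seminorm-base, normlike-base from Examples \ref{def:normbase}--\ref{def:normlikebase}) cleanly separated in the argument for $4^\circ$, since they are distinguished only by the positivity domain of the underlying (semi)norm; everything else is essentially bookkeeping driven by the identification $\{\psi_{\max}\le 0\}=\ell(K)$.
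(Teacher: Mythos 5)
Your proposal is correct and follows essentially the same route as the paper: everything in $2^\circ$--$4^\circ$ is driven by the identity $\{y \in E \mid \psi_{\max}(y) \le 0\} = (-K) \cap K = \ell(K)$ (and its strict analogue for ${\rm icor}\, K$), exactly as in the paper's proof. The only cosmetic differences are that you verify the seminorm axioms in $1^\circ$ directly where the paper cites a known lemma, and you close the four-way equivalence in $4^\circ$ by a slightly different arrangement of implications than the paper's cycle (i)$\Rightarrow$(ii)$\Rightarrow$(iii)$\Rightarrow$(iv)$\Rightarrow$(i).
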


\begin{proof}
	\begin{itemize}
		\item[$1^\circ$] This assertion is well-known, see \cite[Lem 2.1.29]{TammerWeidner2020}.
		\item[$2^\circ$] If $-K = \{x \in E \mid \psi(x) \leq 0\}$, then $K = \{x \in E \mid \psi(-x) \leq 0\}$. Thus, we get
		\begin{align*}
		\{y \in E \mid \psi_{\max}(y) \leq 0\}
		& = \{y \in E \mid \psi(y) \leq 0, \psi(-y) \leq 0\}\\
		& = \{y \in E \mid \psi(y) \leq 0\} \cap \{y \in E \mid \psi(-y) \leq 0\}\\
		& = (-K)\cap K
		 = \ell(K).
		\end{align*}
		Clearly, $\psi_{\max}$ is nonnegative on $E$ $\iff$ $\{y \in E \mid \psi_{\max}(y) = 0\} = \ell(K)$, while 
		$K$ is pointed $\iff$ $\{y \in E \mid \psi_{\max}(y) > 0\} = E \setminus \{0\}$. 
		
		\item[$3^\circ$] 
		Since $-{\rm icor}\, K = \{x \in E \mid \psi(x) < 0\}$ and $0 \notin {\rm icor}\, K$ we derive
		\begin{align*}
		\{y \in E \mid \psi_{\max}(y) < 0\}
		& = \{y \in E \mid \psi(y) < 0, \psi(-y) < 0\}\\
		& = \{y \in E \mid \psi(y) < 0\} \cap \{y \in E \mid \psi(-y) < 0\}\\
		& = (-{\rm icor}\,K)\cap ({\rm icor}\,K) = \ell(P_0) \setminus \{0\}.
		\end{align*}
		Thus,  $P_0$ is pointed $ \iff \{y \in E \mid \psi_{\max}(y) < 0\} = \emptyset$ (i.e., $\psi_{\max}$ is nonnegative).

		\item[$4^\circ$] By $1^\circ$, $\psi_{\max}$ is a seminorm. 
		Since
		$$
		\{y \in A \mid \psi_{\max}(y) = 0\} = \{y \in E \mid \psi_{\max}(y) = 0\} \cap A \overset{2^\circ}{=} \ell(K) \cap A,
		$$
		we have $\ell(K) \cap A = \{0\} \iff$ $B_A$ is a normlike-base for $A$. 
		Moreover,
		$$
		K \setminus \ell(K) \overset{2^\circ}{=} K \setminus \{y \in E \mid \psi_{\max}(y) = 0\} = \{y \in K \mid \psi_{\max}(y) > 0\},
		$$ 
		hence $K \setminus \ell(K) \neq \emptyset \iff B_K = \{y \in K \mid \psi_{\max}(y) = 1\} \neq \emptyset$ (by the cone property of $K$, and by the positive homogeneity of $\psi$).
		Thus, $K \neq \ell(K)\iff$ $B_K$ is a seminorm-base for $K$. 
		
		Let us prove the remaining equivalences. (i) $\Longrightarrow$ (ii) follows by $1^\circ$ and $2^\circ$. (ii) $\Longrightarrow$ (iii) $\Longrightarrow$ (iv) follow by the definitions of (norm-; normlike-)bases. It remains to show that (iv) $\Longrightarrow$ (i). If $K$ is not pointed, then $\{y \in E \mid \psi_{\max}(y) = 0\} \overset{2^\circ}{=} \ell(K) \neq \{0\}$, and so $B_K$ is not a normlike-base for $K$. 
	\end{itemize}
\end{proof}

\begin{example}Let $K$ be a solid, convex cone in $E$.
Well known separating functions $\psi$ (respectively, scalarization functions in the context of vector optimization) are (under certain assumptions) sublinear and satisfy cone representation properties $-K = \{x \in E \mid \psi(x) \leq 0\}$ and $-{\rm cor}\, K = \{x \in E \mid \psi(x) < 0\}$,
for instance the functions by Gerstewitz \cite{Gerstewitz1983},  Hiriart-Urruty \cite{HiriartUrruty}, and Bishop and Phelps \cite{BP1962} / Kasimbeyli \cite{Kasimbeyli2010} (see also Bouza, Quintana and Tammer \cite{BouzaQuintanaTammer},  Göpfert {\rm et al.} \cite[Sec. 2.3]{GoeRiaTamZal2003}, Ha \cite[Sec. 3.1 and 3.2]{Ha2022}, Jahn \cite{Jahn2023}, Tammer and Weidner \cite{TammerWeidner2020}, Zaffaroni \cite[Prop. 3.2]{Zaffaroni2003}).
\end{example}

In the following, we like to present a first theorem where we derive a representation for the algebraic interior of the topological dual cone $K^{+}_\tau$.

\begin{theorem} \label{th:cor_dual_cone_compact}
	Assume that $(E, \mathcal{F}, \tau)$ is a real separated locally convex space, $K \subseteq E$ is a nontrivial cone, $\psi$ be a seminorm, and  $B_{K}$ is a $\tau$-compact normlike-base of $K$. Then,
   	$
   	{\rm cor}\, K^{+}_\tau  = \{x^* \in E^* \mid \forall\, x \in B_K:\;  x^*(x) > 0\} = K^{\#}_\tau.
   	$
\end{theorem}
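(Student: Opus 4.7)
The plan is to split the chain of equalities into three pieces. First I would handle the easy right-hand equality $\{x^* \in E^* \mid \forall\, x \in B_K: x^*(x) > 0\} = K^\#_\tau$: since $\psi(0) = 0 \neq 1$, the normlike-base $B_K$ omits the origin, so by Lemma~\ref{lem:K=convB} ($7^\circ$) we have $K \setminus \{0\} = \mathbb{P} \cdot B_K$, and positivity of a linear functional on $B_K$ is therefore equivalent to positivity on all of $K \setminus \{0\}$, which is by definition membership in $K^\#_\tau$.

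Next, for the inclusion $\mathrm{cor}\, K^+_\tau \subseteq K^\#_\tau$, I would argue by contradiction. Suppose $x^* \in \mathrm{cor}\, K^+_\tau$ but $x^*(x_0) = 0$ for some $x_0 \in K \setminus \{0\}$. Since $(E,\tau)$ is separated and locally convex and $x_0 \neq 0$, a standard Hahn-Banach separation supplies $y^* \in E^*$ with $y^*(x_0) < 0$. Then $(x^* + \varepsilon y^*)(x_0) = \varepsilon\, y^*(x_0) < 0$ for every $\varepsilon > 0$, contradicting the fact that $x^* + [0,\varepsilon_0] \cdot y^* \subseteq K^+_\tau$ for some $\varepsilon_0 > 0$ coming from the definition of the core.

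For the reverse inclusion $K^\#_\tau \subseteq \mathrm{cor}\, K^+_\tau$, the compactness of $B_K$ is essential. Given $x^* \in K^\#_\tau$ and any $y^* \in E^*$, the $\tau$-continuity of both functionals together with the $\tau$-compactness of $B_K$ ensure that $m := \min_{b \in B_K} x^*(b)$ and $M := \max_{b \in B_K} |y^*(b)|$ are attained and finite. Since $x^*(b) > 0$ for every $b \in B_K$, we obtain $m > 0$. Choosing $\varepsilon_0 := m/M$ if $M > 0$ and any positive number if $M = 0$, the estimate $(x^* + t y^*)(b) \geq m - \varepsilon_0 M \geq 0$ holds for all $t \in [0,\varepsilon_0]$ and all $b \in B_K$. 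Because $K = \mathbb{R}_+ \cdot B_K$ (Lemma~\ref{lem:K=convB} ($7^\circ$)), this nonnegativity propagates to every element of $K$, so $x^* + [0,\varepsilon_0] \cdot y^* \subseteq K^+_\tau$, proving $x^* \in \mathrm{cor}\, K^+_\tau$.

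The main obstacle I expect is upgrading the pointwise strict positivity $x^*(b) > 0$ for each $b \in B_K$ to a \emph{uniform} strict positive lower bound $m > 0$ that can be balanced against $\varepsilon_0 M$; this is exactly where the $\tau$-compactness hypothesis on $B_K$ is used. Without it the infimum of $x^*$ over $B_K$ could be zero, and the interior-point property would fail.
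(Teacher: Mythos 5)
Your proposal is correct and follows essentially the same route as the paper: the inclusion $K^\#_\tau \subseteq {\rm cor}\,K^+_\tau$ via the Weierstraß theorem on the $\tau$-compact base $B_K$ and the balancing $\varepsilon_0 = m/M$, and the reverse inclusion by contradiction using the fact that $E^*$ separates points in the separated locally convex space. Your explicit treatment of the identification $\{x^*\in E^*\mid \forall x\in B_K:\ x^*(x)>0\}=K^\#_\tau$ via Lemma \ref{lem:K=convB} ($7^\circ$) and of the degenerate case $M=0$ are minor refinements of details the paper leaves implicit.
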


\begin{proof} First we prove that ${\rm cor}\, K^{+}_\tau \supseteq K_\tau^{\#}$. 
	Take some $x^* \in K_\tau^{\#}$ (i.e., $x^*(x) > 0$ for all $x \in B_K$) and an arbitrary $y^* \in E^*$.
	We are going to show that 
	\begin{equation}
	    \label{eq:corK+CorProperty1}
	    x^* + [0, \varepsilon] \cdot y^* \subseteq K^{+}_\tau \quad \mbox{for some }\varepsilon > 0.
	\end{equation}
	Since $B_K$ is $\tau$-compact and $x^*$ and $y^*$ are $\tau$-continuous, there are $\delta, C > 0$ such that $x^*(x) \geq \delta > 0$ and $|y^*(x)| \leq C$ for all $x \in B_K$ (by a general version of the well-known Weierstraß theorem, see Jahn \cite[Th. 3.26]{Jahn2011}). Define $\varepsilon := \frac{\delta}{C} \, (> 0)$. Then, for any $x \in B_K$ and $\bar \varepsilon \in [0, \varepsilon]$, we get
	$
	(x^* + \bar \varepsilon  y^*)(x) = x^*(x) + \bar \varepsilon  y^*(x) \geq \delta + \bar \varepsilon  y^*(x)  \geq \delta - \bar \varepsilon  C \geq 0. 
	$
	This shows that \eqref{eq:corK+CorProperty1} is valid, i.e., $x^* \in {\rm cor}\, K^{+}_\tau$.
	
	Now, we are going to prove ${\rm cor}\, K^{+}_\tau \subseteq K_\tau^{\#}$. Basically we adapt the ideas by Jahn \cite[Lem. 1.25]{Jahn2011} that he used for proving the algebraic counterpart ${\rm cor}\, K^+ \subseteq K^\#$. 
	Take some $x^* \in {\rm cor}\, K^{+}_\tau$. On the contrary suppose that $x^* \notin K_\tau^{\#}$, hence there is $k \in \mathbb{P} \cdot B_K = K \setminus \{0\}$ with $x^*(k) \leq 0$. Since the topological dual space $E^*$ separates elements in $E$, there is $y^* \in E^*$ with $y^*(k) < y^*(0) = 0$. Then, we conclude that
	$
	(x^* + \varepsilon y^*)(k) = x^*(k) + \varepsilon y^*(k) < 0$ for all $ \varepsilon > 0$,
	a contradiction to $x^* \in{\rm cor}\, K^+_\tau$.	
\end{proof}

\begin{remark} \label{rem:corK+=intK+BanachSpace}
Assume that $(E, \mathcal{F}, \tau)$ is a real separated locally convex space, and $K \subseteq E$ is a nontrivial cone.
Since the dual cone $K^{+}_\tau$ is a convex set in $E^*$, we know that ${\rm int}\, K^{+}_\tau \neq \emptyset$ implies ${\rm int}\, K^{+}_\tau = {\rm cor}\, K^{+}_\tau$. 
Here, ``${\rm int}$'' denotes the interior w.r.t the weak$^*$ topology in $E^*$ (or the norm topology in $E^*$ in a setting of normed spaces).
Moreover, since $K^{+}_\tau$ is closed and convex, we have ${\rm int}\, K^{+}_\tau = {\rm cor}\, K^{+}_\tau$ in a real Banach space $E^*$ (e.g., if $E$ is a real normed space).
\end{remark}
 
\begin{remark} \label{rem:Krein-Rutman}
    Theorem \ref{th:cor_dual_cone_compact} provides sufficient conditions for the validity of the equality ${\rm cor}\, K^{+}_\tau = K^\#_\tau$. Notice that the nontrivial cone $K$ needs not to be convex.
    In the case that $K$ is a convex cone, there are further results known in the literature concerning the nonemptyness of $K_\tau^\#$ (respectively, $K_\tau^\&$) and relationships between $K_\tau^\#$ and generalized interiors of $K_\tau^+$ (see, e.g., Bot, Grad and Wanka \cite[Prop. 2.1.1]{BotGradWanka},  Jahn \cite[Th. 3.38, Lem. 3.21]{Jahn2011}, Khan, Tammer and Z\u{a}linescu \cite[Th. 2.4.7]{Khanetal2015},  Khazayel {\rm et al.} \cite[Th. 4.1]{Khazayel2021a}). 
\end{remark}

\section{Nonlinear Separating Functions and Augmented Dual Cones}

Assume that $(E, \tau)$ is a real topological linear space,  $K \subseteq E$ is a nontrivial cone, and $\psi: E \to \mathbb{R}$ is a seminorm. For any $(x', \alpha) \in E' \times \mathbb{R}_+$, we consider the separating function $\varphi_{x', \alpha} : E \to \mathbb{R}$ introduced in \eqref{f16} by
\begin{equation}\label{f31}
\varphi_{x', \alpha}(y)   = x'(y) + \alpha \psi(y) \quad \mbox{for all } y \in E.
\end{equation}

\begin{remark} \label{rem_properties_sep_fcn}
The separating function $\varphi_{x', \alpha}$ in \eqref{f31} (which is 
a combination of a seminorm $\psi$ with a linear function $x' \in E'$) can be seen as a generalization of the function \eqref{f12} associated to so-called Bishop-Phelps cones (which is 
a combination of a norm $||\cdot||$ with a linear continuous function $x^* \in E^*$). For more details, we refer the reader to the works by Bishop and Phelps \cite{BP1962}, Jahn \cite[Ex. 2.1]{Jahn2022} \cite[Rem. 2.1]{Jahn2023}, Kasimbeyli \cite{Kasimbeyli2010}, and to Section \ref{sec:bp_cones} of our paper. 
\end{remark}

Following the definitions of so-called augmented dual cones by Kasimbeyli \cite{Kasimbeyli2010}, we define an (algebraic) augmented dual cone by
$$
K^{a+} := \{(x', \alpha) \in K^+ \times \mathbb{R}_+ \mid \forall\, y \in K:\; x'(y) - \alpha \psi(y) \geq 0\}
$$
as well the following further sets
\begin{align*}
K^{a\circ} & := \{(x', \alpha) \in (K^+ \setminus \ell(K^+)) \times \mathbb{R}_+ \mid \forall\, y \in {\rm icor}\, K:\;  x'(y) - \alpha \psi(y)  > 0\},\\
K^{a\#} & := \{(x', \alpha) \in K^\# \times \mathbb{R}_+ \mid \forall\, y \in K \setminus \{0\}:\;  x'(y) - \alpha \psi(y) > 0\},\\
K^{a\&} & := \{(x', \alpha) \in K^\& \times \mathbb{R}_+ \mid \forall\, y \in K \setminus \ell(K):\;  x'(y) - \alpha \psi(y) > 0\}.
\end{align*}

\begin{remark}  \label{rem:adCones_Bases}
In the definitions of $K^{a+}$, $K^{a\#}$ and $K^{a\&}$ one can also write $E'$ (or $K^+$) instead of $K^+$, $K^{\#}$ and $K^\&$, respectively. This is mainly due to the fact that for $(x', \alpha) \in E' \times \mathbb{R}_+$ we have $x'(y) \geq x'(y) - \alpha \psi(y)$ for all $y \in E$.
Furthermore, in the definition of $K^{a\circ}$ one can replace $K^+ \setminus \ell(K^+)$ also by $K^+$, since
$$
\{(x', \alpha) \in \ell(K^+) \times \mathbb{R}_+ \mid \forall\, y \in {\rm icor}\, K:\;  x'(y) - \alpha \psi(y)  > 0\} = \emptyset,
$$
and further by $E'$ if $K \subseteq {\rm acl}({\rm icor}\, K)$ (e.g., if $K$ is a relatively solid, convex cone).

It is a simple observation (with view on Lemma \ref{lem:K=convB}) that for a normlike-base $B_K$ of $K$ we have
\begin{align*}
K^{a+} & = \{(x', \alpha) \in K^+ \times \mathbb{R}_+ \mid \forall\, y \in B_K:\; x'(y) \geq  \alpha\},\\
K^{a\#} & = \{(x', \alpha) \in K^\# \times \mathbb{R}_+ \mid \forall\, y \in B_K:\;  x'(y) > \alpha\},\\
K^{a\&} & = \{(x', \alpha) \in K^\& \times \mathbb{R}_+ \mid \forall\, y \in B_K \cap K \setminus \ell(K):\;  x'(y) > \alpha\},
\end{align*}
and if $K$ is solid (hence $K^+$ is pointed), then
\begin{align*}
K^{a\circ} & = \{(x', \alpha) \in (K^+ \setminus \{0\}) \times \mathbb{R}_+ \mid \forall\, y \in  B_K \cap {\rm cor}\,K:\;  x'(y) > \alpha\}.
\end{align*}
\end{remark}

It is convenient, to introduce topological counterparts of the algebraic augmented dual cone $K^{a+}$ and the corresponding sets $K^{a\circ}$, $K^{a\#}$ and $K^{a\&}$ in the following way:
\begin{align*}
	K^{a+}_\tau & := K^{a+} \cap (E^* \times \mathbb{R}), & K^{a\#}_\tau & := K^{a\#} \cap (E^* \times \mathbb{R}),\\
    K^{a\circ}_\tau &:= K^{a\circ} \cap (E^* \times \mathbb{R}), &
	K^{a\&}_\tau & := K^{a\&} \cap (E^* \times \mathbb{R}).
\end{align*}

\begin{remark} Recall that the following properties are valid:
	\begin{itemize}
		\item[$1^\circ$] ${\rm int}_\tau\, K \subseteq {\rm cor}\,K  \subseteq {\rm icor}\,K$, and if $K$ is convex with ${\rm int}_\tau\, K \neq \emptyset$, then ${\rm icor}\,K = {\rm cor}\,K = {\rm int}_\tau\, K$.
		\item[$2^\circ$] If $E$ is locally convex, $K$ is convex, and ${\rm rint}_\tau\, K \neq \emptyset$, then ${\rm icor}\,K = {\rm rint}_\tau\, K$.
		\item[$3^\circ$] $K^{+} \cap E^* = K^{+}_\tau$, $(K^+ \setminus \ell(K^+)) \cap E^* = K^+_\tau \setminus \ell(K^+_\tau)$, $K^{\#} \cap E^* = K^{\#}_\tau$ and $K^{\&} \cap E^* = K^{\&}_\tau$.
	\end{itemize}
\end{remark}

\begin{remark} \label{rem_properties_sep_fcn_2}
We like to collect some useful properties of the separating function $\varphi_{x', \alpha}$ with  $(x', \alpha) \in E' \times \mathbb{R}_+$:
\begin{itemize}
    \item $\varphi_{x', \alpha}$ is well-defined and sublinear (hence convex).
    \item If $x' \in E^*$ and $\psi$ is $\tau$-continuous, then $\varphi_{x', \alpha}$ is $\tau$-continuous.
    \item The zero lower-level set of $\varphi_{x', \alpha}$, namely the set 
    $C_{x', \alpha}^{\leq} := \{x \in E \mid \varphi_{x', \alpha}(x) \leq 0\},$
    is a convex cone, and if $\varphi_{x', \alpha}$ is $\tau$-continuous, then $C_{x', \alpha}$ is $\tau$-closed.
    \item If the strict zero lower-level set of $\varphi_{x', \alpha}$, namely the set
    $C_{x', \alpha}^{<} := \{x \in E \mid \varphi_{x', \alpha}(x) < 0\},$
    is nonempty (e.g., if $(x', \alpha) \in K^{a\#}$), then 
    $
    {\rm cor}\, C_{x', \alpha}^{\leq} = C_{x', \alpha}^{<}.
    $
    \item If $C_{x', \alpha}^{<}$ is nonempty and $\varphi_{x', \alpha}$ is $\tau$-continuous, then 
    $
    {\rm int}_\tau\, C_{x', \alpha}^{\leq} = C_{x', \alpha}^{<}.
    $
\end{itemize}

As mentioned in the Introduction, we will focus in our upcoming Section \ref{sec:cone_separation_lcs} on the separation of two (not necessarily convex) cones by a (convex) cone $C \subseteq E$. More precisely, we like to find $(x', \alpha) \in E' \times \mathbb{R}_+$ such that $C = C_{x', \alpha}^{\leq}$ is a separating cone. 

\end{remark}

In the next lemma, we study relationships between the augmented dual cone $K_\tau^{a+}$ and the sets  $K_\tau^{a\#}$, $K_\tau^{a\&}$ and $K_\tau^{a\circ}$. 

\begin{lemma} 	\label{lem:Kaw_prop}
	Assume that $E$ is a real linear space, and $K$ is a nontrivial cone with normlike-base $B_K$. Then, the following assertions hold:
	\begin{itemize}
		\item[$1^\circ$] $K^{a\#} \subseteq K^{a\&} \cup K^{a+}$ and $K_\tau^{a\#} \subseteq K_\tau^{a\&} \cup K_\tau^{a+}$.
		\item[$2^\circ$] If ${\rm icor}\,K \subseteq K \setminus \ell(K)$ (e.g., if $K$ is convex with $K \neq \ell(K)$), then $K^{a\&} \subseteq K^{a\circ}$ and $K_\tau^{a\&} \subseteq K_\tau^{a\circ}$.
		\item[$3^\circ$] If $0 \notin {\rm icor}\,K$ (e.g., if $K$ is convex with $K \neq \ell(K)$), then $K^{a\#} \subseteq K^{a\circ}$ and $K_\tau^{a\#} \subseteq K_\tau^{a\circ}$.
		\item[$4^\circ$] Let $(E, \tau)$ be a real topological linear space, and $\psi$ be $\tau$-continuous. If $K \subseteq {\rm cl}_\tau({\rm icor}\,K)$ (e.g., if $K$ is $\tau$-solid and convex), then $K_\tau^{a\circ} \subseteq K_\tau^{a+}$. 
		\item[$5^\circ$] Let $(E, \tau)$ be a real topological linear space, and $\psi$ be $\tau$-continuous. If $K \subseteq {\rm cl}_\tau(K \setminus \ell(K))$ (e.g., if $K$ is $\tau$-solid and convex with $K \neq \ell(K)$), then $K_\tau^{a\&} \subseteq K_\tau^{a+}$.  
	\end{itemize}	
\end{lemma}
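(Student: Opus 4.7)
The five assertions are independent and each reduces to comparing the defining conditions of the augmented dual cones. For any $(x',\alpha)$ in the source set, I must check two things: \emph{(i)} the linear part $x'$ lies in the refinement of $K^+$ labelling the target set (one of $K^+$, $K^\#$, $K^\&$, $K^+\setminus\ell(K^+)$); and \emph{(ii)} the sign condition $x'(y)-\alpha\psi(y)>0$ or $\geq 0$ required for the target holds on its subset of $K$. Ingredient (i) is elementary, using $K^\#\subseteq K^+$, $K^\#\subseteq K^\&$ and $K^\#\cap\ell(K^+)=\emptyset$ whenever $K\neq\{0\}$; ingredient (ii) reduces either to a direct set inclusion (parts $1^\circ$, $2^\circ$, $3^\circ$) or to a closure/continuity limit (parts $4^\circ$, $5^\circ$).

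For $1^\circ$ I take $(x',\alpha)\in K^{a\#}$: from $x'\in K^\#\subseteq K^+$ and the fact that the strict inequality on $K\setminus\{0\}$ extends at $y=0$ as $0\geq 0$, one in fact obtains the stronger containment $K^{a\#}\subseteq K^{a+}$, which implies $1^\circ$ a fortiori. For $3^\circ$, $0\notin{\rm icor}\,K$ gives ${\rm icor}\,K\subseteq K\setminus\{0\}$, so the $K^{a\#}$-inequality restricts to ${\rm icor}\,K$; and any witness $k\in K\setminus\{0\}$ (which exists because $K$ is nontrivial) makes $x'(k)>0$, so $x'\in K^+\setminus\ell(K^+)$. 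In both items the topological counterparts are obtained by intersecting with $E^*\times\mathbb{R}$.

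For $2^\circ$ the inclusion ${\rm icor}\,K\subseteq K\setminus\ell(K)$ directly transports the $K^{a\&}$-sign condition to ${\rm icor}\,K$. The delicate step is to verify $x'\in K^+\setminus\ell(K^+)$ starting from $x'\in K^\&$. Under the "e.g." clause ($K$ convex with $K\neq\ell(K)$), $\ell(K)$ is a linear subspace, and for any $k\in\ell(K)$ and $\bar k\in K\setminus\ell(K)$ one checks that $\bar k+\mu k\in K\setminus\ell(K)$ for every $\mu\in\mathbb{R}$: $K$-membership comes from $\mu k\in\ell(K)\subseteq K$ together with $K+K\subseteq K$, while a short rearrangement using $-\bar k\notin K$ rules out membership in $\ell(K)$. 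Consequently $x'(\bar k)+\mu x'(k)>0$ for all real $\mu$, which forces $x'(k)=0$; hence $x'\equiv 0$ on $\ell(K)$ and $x'>0$ on $K\setminus\ell(K)$, giving $x'\in K^+\setminus\ell(K^+)$. I expect this functional-positivity argument to be the main algebraic obstacle; everything else in the lemma is bookkeeping.

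For $4^\circ$ and $5^\circ$ the pattern is a single approximation argument. Given $(x^*,\alpha)$ in the source set and $y\in K$, the closure hypothesis ($K\subseteq{\rm cl}_\tau({\rm icor}\,K)$ for $4^\circ$, $K\subseteq{\rm cl}_\tau(K\setminus\ell(K))$ for $5^\circ$) furnishes a net $(y_\delta)$ in ${\rm icor}\,K$ (resp.\ in $K\setminus\ell(K)$) converging to $y$ in $\tau$; by $\tau$-continuity of both $x^*$ and $\psi$, the strict inequality $x^*(y_\delta)-\alpha\psi(y_\delta)>0$ passes to $x^*(y)-\alpha\psi(y)\geq 0$ in the limit. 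In $4^\circ$ the functional already sits in $K^+$ by definition of $K_\tau^{a\circ}$; in $5^\circ$ the same limit combined with $\alpha\psi(y)\geq 0$ yields $x^*(y)\geq 0$, hence $x^*\in K^+$. The "e.g." clauses in both items follow from the standard facts ${\rm int}_\tau\,K={\rm cor}\,K={\rm icor}\,K$ for convex $\tau$-solid cones and $K\subseteq{\rm cl}_\tau({\rm icor}\,K)$ in that setting.
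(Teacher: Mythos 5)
Your treatment of $1^\circ$, $3^\circ$, $4^\circ$ and $5^\circ$ is correct and follows essentially the same route as the paper's proof: unwind the definitions for the algebraic inclusions, and use a closure/continuity limit for the topological ones. (For $1^\circ$ the paper in fact proves both $K^{a\#}\subseteq K^{a+}$ and $K^{a\#}\subseteq K^{a\&}$; your single inclusion $K^{a\#}\subseteq K^{a+}$ already suffices for the stated union. In $5^\circ$ your extra step deriving $x^*\in K^+$ from the limiting inequality together with $\alpha\psi\geq 0$ is a point the paper's proof passes over, since an element of $K^{a\&}_\tau$ only comes with $x^*\in K^\&$.)

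The one real divergence is $2^\circ$, and you have put your finger on exactly the right spot: the sign condition transfers trivially once ${\rm icor}\,K\subseteq K\setminus\ell(K)$, but upgrading the first component from $K^\&$ to $K^+\setminus\ell(K^+)$ is not automatic, because $K^\&\subseteq K^+$ fails in general (the paper itself records in Section 2.3 that this inclusion needs $K\subseteq{\rm acl}(K\setminus\ell(K))$). The paper's proof dismisses this step as ``obvious''; your argument settles it, correctly, but only under the parenthetical convexity clause. As written, your proof of $2^\circ$ therefore does not cover the stated hypothesis for nonconvex $K$ — and this is not merely presentational: for $K=(\mathbb{R}\times\{0\})\cup\{(s,t)\in\mathbb{R}^2\mid t\geq |s|\}$ with $\psi=\|\cdot\|_2$ one has $\ell(K)=\mathbb{R}\times\{0\}$ and ${\rm icor}\,K=\{(s,t)\mid t>|s|\}\subseteq K\setminus\ell(K)$, yet $x'=(1,2)$ with $\alpha=\tfrac12$ belongs to $K^{a\&}$ while $x'(-1,0)<0$ shows $x'\notin K^+$, so $K^{a\&}\not\subseteq K^{a\circ}$. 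Hence the general case cannot be closed as stated: one must either add a hypothesis such as $K\subseteq{\rm acl}(K\setminus\ell(K))$ (which gives $K^\&\subseteq K^+$, after which $x'\notin\ell(K^+)$ follows from any witness in $K\setminus\ell(K)$) or restrict $2^\circ$ to the convex situation you actually prove. Your instinct that this membership transfer is ``the main algebraic obstacle'' is right; what remains is to state explicitly under which hypothesis it can be overcome, rather than leaving the nonconvex case open.
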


\begin{proof} First, notice that in assertions $1^\circ$, $2^\circ$ and $3^\circ$ the topological statement is derived from the algebraic statement by an intersection with $E^* \times \mathbb{R}$.
	\begin{itemize}
	\item[$1^\circ$] Since $K \setminus \ell(K) \subseteq K \setminus \{0\}$ we get $K^{a\#} \subseteq K^{a\&}$ (taking into account the first part of Remark \ref{rem:adCones_Bases}). The inclusion $K^{a\#} \subseteq K^{a+}$ is obvious by noting that for $(x', \alpha) \in K^+ \times \mathbb{R}_+$ we have $x'(0) + \alpha \psi(0) = 0$. 
	\item[$2^\circ$] Obviously, $K^{a\&} \subseteq K^{a\circ}$ if ${\rm icor}\,K \subseteq K \setminus \ell(K)$.
	Notice, if $K$ is convex, then $K \neq \ell(K) \iff 0 \notin {\rm icor}\, K \iff {\rm icor}\, K \subseteq K \setminus \ell(K)$ (see Khazayel {\rm et al.} \cite[Lem. 2.9]{Khazayel2021a}).
	\item[$3^\circ$] If $0 \notin {\rm icor}\,K$, then ${\rm icor}\, K \subseteq K \setminus \{0\}$, hence $K^{a\#} \subseteq K^{a\circ}$.
	\item[$4^\circ$] Fix some $(x', \alpha) \in K^+ \times \mathbb{R}_+$.
	From $x'(y) - \alpha \psi(y)  > 0$ for all $y \in {\rm icor}\, K$ we get $x'(y) - \alpha \psi(y)  \geq 0$ for all $y \in {\rm cl}_\tau({\rm icor}\, K) \supseteq K$, i.e., $K_\tau^{a\circ} \subseteq K_\tau^{a+}$. 
	
	If $K$ is $\tau$-solid and convex, then ${\rm int}_\tau\, K = {\rm cor}\, K = {\rm icor}\, K$, hence 
	$K \subseteq {\rm cl}_\tau\, K = {\rm cl}_\tau({\rm int}_\tau\, K) =  {\rm cl}_\tau({\rm icor}\, K).$
	
	\item[$5^\circ$]
	
	Fix some $(x', \alpha) \in K^+ \times \mathbb{R}_+$.
	From $x'(y) - \alpha \psi(y)  > 0$ for all $y \in  K \setminus \ell(K)$ we get $x'(y) - \alpha \psi(y)  \geq 0$ for all $y \in {\rm cl}_\tau(K \setminus \ell(K)) \supseteq K$, i.e., $K_\tau^{a\&} \subseteq K_\tau^{a+}$.
	
	If $K$ is $\tau$-solid and convex with $K \neq \ell(K)$, then ${\rm int}_\tau\, K = {\rm cor}\, K = {\rm icor}\, K$, hence 
	$K \subseteq {\rm cl}_\tau\, K = {\rm cl}_\tau({\rm int}_\tau\, K) = {\rm cl}_\tau({\rm icor}\, K) \subseteq {\rm cl}_\tau(K \setminus \ell(K)).$
	
	\end{itemize}
\end{proof}

Next, we state some main properties of the algebraic augmented dual cone $K^{a+}$.

\begin{lemma} \label{lem:properties_augmented_dual_cones}
	Assume that $E$ is a real linear space, and $K$ is a nontrivial cone.
	Then, the following assertions hold:
    \begin{itemize}
    	\item[$1^\circ$] $K^+ \times \{0\} \subseteq K^{a+}$, $K^\#  \times \{0\} \subseteq K^{a\#}$, $K^\&  \times \{0\} \subseteq K^{a\&}$,  $(K^+ \setminus \ell(K^+)) \times \{0\} \subseteq K^{a\circ}$.
    	\item[$2^\circ$] {\bf Cone property:} $\mathbb{P} \cdot K^{a\theta} \subseteq K^{a\theta}$ for all $\theta \in \{+, \circ, \#, \&\}$,
    	and $0 \in K^{a+}$ (i.e., $K^{a+}$ is a cone). 
    	If ${\rm icor}\, K \neq \emptyset$ ($K \neq \{0\}$, respectively $K \neq \ell(K)$), then $0 \notin K^{a\circ}$ ($0 \notin K^{a\#}$, respectively $0 \notin K^{a\&}$).
    	
    	\item[$3^\circ$] {\bf Convexity:} $K^{a\theta}$ is convex (i.e., $K^{a\theta} + K^{a\theta} \subseteq K^{a\theta}$) for all $\theta \in \{+, \circ, \#, \&\}$.
    	
    	\item[$4^\circ$] {\bf  Pointedness:} $\ell(K^{a+}) = \ell(K^+) \times \{0\}$, hence $K^{a+}$ is pointed $\iff$ $K^+$ is pointed.
    	Moreover, if there is $y \in K$ with $\psi(y) > 0$ (e.g., if $K$ has a normlike-base), then 
    	$K^{a+} \setminus \ell(K^{a+}) = K^{a+} \cap (K^+ \setminus \ell(K^+) \times \mathbb{P})$.
    
    	\item[$5^\circ$] {\bf  Nontriviality:} If there is $y \in K$ with $\psi(y) > 0$, then $K^{a+}$ is nontrivial $\iff$ $K^+ \neq \{0\}$.
\end{itemize}
\end{lemma}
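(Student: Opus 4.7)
The plan is to verify each of the five assertions by a direct definition-chase, unwinding the inequalities $x'(y) - \alpha\psi(y) \geq 0$ (or $> 0$) that cut out the augmented dual cones and exploiting the linearity of this expression in $(x',\alpha)$. The only outside ingredient I expect to invoke is Lemma~\ref{lem:K+AndicorK}.

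For $1^\circ$, substituting $\alpha = 0$ makes the term $\alpha\psi(y)$ vanish, so the defining condition reduces to $x'(y) \geq 0$ (respectively $> 0$), which is precisely the defining property of $K^+$, $K^\#$, or $K^\&$. For the inclusion $(K^+\setminus\ell(K^+))\times\{0\} \subseteq K^{a\circ}$, Lemma~\ref{lem:K+AndicorK}~($1^\circ$) provides the strict positivity $x'(y) > 0$ for every $y \in \mathrm{icor}\,K$. For $2^\circ$, multiplication by any $\lambda > 0$ preserves both the cone-prefix membership (by linearity) and the inequality (both sides scale by $\lambda$); the inclusion $0 \in K^{a+}$ is trivial, and the three exclusions $0 \notin K^{a\theta}$ for $\theta \in \{\circ,\#,\&\}$ follow by testing $0(k) = 0$ against an element $k$ of the nonempty witness set ($\mathrm{icor}\,K$, $K\setminus\{0\}$, or $K\setminus\ell(K)$, respectively) against the required strict positivity. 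For $3^\circ$, adding two members $(x'_i,\alpha_i) \in K^{a\theta}$ coordinatewise keeps the sum in the (convex) cone prefix and adds the two valid defining inequalities to produce a third valid defining inequality; combined with $2^\circ$, this delivers convexity.

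The most substantive piece is $4^\circ$. I would first establish $\ell(K^{a+}) = \ell(K^+)\times\{0\}$ by double inclusion: ``$\supseteq$'' follows from $1^\circ$ applied to both $x'$ and $-x'$; for ``$\subseteq$'', the requirement $(x',\alpha),(-x',-\alpha) \in K^{a+} \subseteq E'\times\mathbb{R}_+$ forces $\alpha = 0$, and then $\pm x' \in K^+$ gives $x' \in \ell(K^+)$. The pointedness equivalence is an immediate corollary. For the sharper equality under the hypothesis that some $y_0 \in K$ satisfies $\psi(y_0) > 0$, the pivotal observation is that any $(x',\alpha) \in K^{a+}$ with $\alpha > 0$ satisfies $x'(y_0) \geq \alpha\psi(y_0) > 0$ and hence $x' \notin \ell(K^+)$; partitioning $K^{a+}\setminus\ell(K^{a+})$ according to whether $\alpha$ is positive or zero then yields the stated identity. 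The same observation drives $5^\circ$: if $K^+ = \{0\}$, then $K^{a+} \subseteq \{0\}\times\mathbb{R}_+$ and the inequality $-\alpha\psi(y_0) \geq 0$ forces $\alpha = 0$, so $K^{a+} = \{0\}$; conversely, if $K^+ \neq \{0\}$, then $1^\circ$ yields nonzero elements $(x',0) \in K^{a+}$.

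I do not anticipate a serious obstacle. Every step is routine, and the only real care required is bookkeeping: matching the correct strict/non-strict inequality and the correct witness set to the correct superscript $\theta \in \{+,\circ,\#,\&\}$, and invoking Lemma~\ref{lem:K+AndicorK} at exactly the point where the algebraic condition $x' \in K^+ \setminus \ell(K^+)$ must be converted into strict positivity on $\mathrm{icor}\,K$.
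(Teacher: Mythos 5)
Your proposal follows the paper's proof essentially step for step: $1^\circ$--$3^\circ$ and $5^\circ$ are the same definition-chase the authors carry out (including the appeal to Lemma~\ref{lem:K+AndicorK}~($1^\circ$) for the $K^{a\circ}$ inclusion), and your pivotal observation in $4^\circ$ --- that $(x',\alpha)\in K^{a+}$ with $\alpha>0$ forces $x'(y_0)\ge\alpha\psi(y_0)>0$ for a witness $y_0$ with $\psi(y_0)>0$, hence $x'\notin\ell(K^+)$ --- is exactly the displayed computation in the paper showing $K^{a+}\cap(\ell(K^+)\times\mathbb{P})=\emptyset$.

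The one step that does not close is the last sentence of your treatment of $4^\circ$: ``partitioning $K^{a+}\setminus\ell(K^{a+})$ according to whether $\alpha$ is positive or zero yields the stated identity.'' It does not. The $\alpha=0$ stratum of $K^{a+}\setminus\ell(K^{a+})$ is $(K^+\setminus\ell(K^+))\times\{0\}$, which by your own part $1^\circ$ lies in $K^{a+}$ and is disjoint from $\ell(K^{a+})=\ell(K^+)\times\{0\}$, yet it is also disjoint from $(K^+\setminus\ell(K^+))\times\mathbb{P}$ because $0\notin\mathbb{P}$. So whenever $K^+\ne\ell(K^+)$ the inclusion $K^{a+}\setminus\ell(K^{a+})\subseteq K^{a+}\cap\bigl((K^+\setminus\ell(K^+))\times\mathbb{P}\bigr)$ fails, and the asserted equality is false as literally written. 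What your computation genuinely proves is the one-sided statement $K^{a+}\cap(E'\times\mathbb{P})=K^{a+}\cap\bigl((K^+\setminus\ell(K^+))\times\mathbb{P}\bigr)$, i.e., the right-hand side of $4^\circ$ is precisely the set of elements of $K^{a+}\setminus\ell(K^{a+})$ whose augmentation parameter is strictly positive. To be fair, this is not a defect you introduced: the paper's own proof makes precisely the same jump from ``$K^{a+}\cap(\ell(K^+)\times\mathbb{P})=\emptyset$'' to the displayed equality, so the gap (really, a misstatement of the second part of $4^\circ$) is inherited from the source. Everything else in your proposal is correct and matches the authors' argument.
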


\begin{proof}

\begin{itemize}
	\item[$1^\circ$] The first three inclusions are obvious by taking a look on the definitions of $K^+$, $K^\#$ and $K^\&$. The last inclusion is a consequence of Lemma \ref{lem:K+AndicorK} ($1^\circ$).	
	\item[$2^\circ$] This assertion follows easily from the definitions of $K^{a\theta}$, $\theta \in \{+, \circ, \#, \&\}$.
	\item[$3^\circ$] Due to assertion $2^\circ$, in order to show that $K^{a\theta}$ is convex it is enough to prove the property $K^{a\theta} + K^{a\theta} \subseteq K^{a\theta}$ for all $\theta \in \{+, \circ, \#, \&\}$. The proof is straightforward taking into account the first part of Remark \ref{rem:adCones_Bases}.
    
	\item[$4^\circ$] It is easy to observe that $\ell(K^{a+}) = \ell(K^+) \times \{0\}$.
	As a direct consequence we get that $K^{a+}$ is pointed if and only if $K^+$ is pointed. 
	Since there is $y \in K$ with $\psi(y) > 0$, we derive 
	\begin{align*}
	& \{(x', \alpha) \in \ell(K^+) \times \mathbb{P}  \mid \forall\, y \in K:\; x'(y) - \alpha \psi(y) \geq 0\}\\
	& = \{(x', \alpha) \in \ell(K^+) \times \mathbb{P}  \mid \forall\, y \in K:\; \alpha \psi(y) \leq 0\}  = \emptyset,
	\end{align*}
	and so
	$$K^{a+} \setminus \ell(K^{a+}) = \{(x', \alpha) \in K^+ \setminus \ell(K^+) \times \mathbb{P} \mid \forall\, y \in K: x'(y) - \alpha \psi(y) \geq 0 \}.$$
	\item[$5^\circ$] Obviously, $K^{a+} \neq E' \times \mathbb{R}_+$. Then, the proof of $K^{a+} \neq \{(0,0)\} \iff K^+ \neq \{0\}$ (if there is $y \in K$ with $\psi(y) > 0$) is straightforward.
\end{itemize}
\end{proof}

The following topological counterpart (with properties of $K_\tau^{a+}$) to the previous lemma (with properties of $K^{a+}$) holds true. 

\begin{lemma} \label{lem:top_properties_augmented_dual_cones}
	Assume that $(E, \tau)$ is a real topological linear space, and $K$ is a nontrivial cone.
	Then, the following assertions hold:
	\begin{itemize}
	\item[$1^\circ$] $K_\tau^+ \times \{0\} \subseteq K_\tau^{a+}$, $K_\tau^\#  \times \{0\} \subseteq K_\tau^{a\#}$, $K_\tau^\&  \times \{0\} \subseteq K_\tau^{a\&}$, $(K_\tau^+ \setminus \ell(K_\tau^+)) \times \{0\} \subseteq K_\tau^{a\circ}$.
	\item[$2^\circ$] {\bf Cone property:} $\mathbb{P} \cdot K_\tau^{a\theta} = K_\tau^{a\theta}$ for all $\theta \in \{+, \circ, \#, \&\}$,
	and $0 \in K_\tau^{a+}$ (i.e., $K_\tau^{a+}$ is a cone). 
	If ${\rm icor}\, K \neq \emptyset$ ($K \neq \{0\}$, respectively $K \neq \ell(K)$), then $0 \notin K_\tau^{a\circ}$ ($0 \notin K_\tau^{a\#}$, respectively $0 \notin K_\tau^{a\&}$).
	
	\item[$3^\circ$] {\bf Convexity:} $K_\tau^{a\theta}$ is convex (i.e., $K_\tau^{a\theta} + K_\tau^{a\theta} \subseteq K_\tau^{a\theta}$) for all $\theta \in \{+, \circ, \#, \&\}$.
	
	\item[$4^\circ$] {\bf  Pointedness:} $\ell(K_\tau^{a+}) = \ell(K_\tau^+) \times \{0\}$, hence $K_\tau^{a+}$ is pointed $\iff$ $K_\tau^+$ is pointed.
	Moreover, if there is $y \in K$ with $\psi(y) > 0$ (e.g., if $K$ has a normlike-base), then
	$K_\tau^{a+} \setminus \ell(K_\tau^{a+}) = K_\tau^{a+} \cap (K_\tau^+ \setminus \ell(K_\tau^+) \times \mathbb{P})$.
		
	\item[$5^\circ$] {\bf  Nontriviality:}  If there is $y \in K$ with $\psi(y) > 0$, then $K^{a+}_\tau$ is nontrivial $\iff$ $K^+_\tau \neq \{0\}$.
\end{itemize}
\end{lemma}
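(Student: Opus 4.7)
The plan is to prove the topological statement in Lemma \ref{lem:top_properties_augmented_dual_cones} by reducing each item directly to the corresponding item in the algebraic Lemma \ref{lem:properties_augmented_dual_cones}. The common thread is the identity $K_\tau^{a\theta} = K^{a\theta} \cap (E^* \times \mathbb{R})$ for $\theta \in \{+, \circ, \#, \&\}$ together with the fact that $E^* \times \mathbb{R}$ is itself a linear subspace of $E' \times \mathbb{R}$ which is closed under positive scalar multiplication, addition, and negation. This means that any property of the algebraic set that is preserved under intersection with a linear subspace transfers for free to the topological set.

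For assertion $1^\circ$, I would use Lemma \ref{lem:properties_augmented_dual_cones}($1^\circ$) together with $K^\theta \cap E^* = K_\tau^\theta$ for $\theta \in \{+, \#, \&\}$ and $(K^+\setminus\ell(K^+))\cap E^* = K_\tau^+\setminus\ell(K_\tau^+)$ (this last identity is already recorded just before the lemma). For $2^\circ$, the inclusion $\mathbb{P} \cdot K_\tau^{a\theta} \subseteq K_\tau^{a\theta}$ is immediate from the algebraic cone property combined with $\mathbb{P}\cdot (E^* \times \mathbb{R}) \subseteq E^* \times \mathbb{R}$, and the reverse inclusion follows by writing $k = \lambda \cdot (\lambda^{-1} k)$ for arbitrary $\lambda > 0$; that $0 \in K_\tau^{a+}$ is clear, and the exclusions of $0$ are inherited from Lemma \ref{lem:properties_augmented_dual_cones}($2^\circ$) since $0 \in E^* \times \mathbb{R}$.

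For $3^\circ$, convexity is preserved under intersection with the linear subspace $E^* \times \mathbb{R}$, so $K_\tau^{a\theta}+K_\tau^{a\theta} \subseteq (K^{a\theta}+K^{a\theta})\cap(E^*\times\mathbb{R}) \subseteq K^{a\theta}\cap(E^*\times\mathbb{R}) = K_\tau^{a\theta}$. For $4^\circ$, I would compute
$$\ell(K_\tau^{a+}) = K_\tau^{a+} \cap (-K_\tau^{a+}) = \ell(K^{a+}) \cap (E^* \times \mathbb{R}) = (\ell(K^+) \times \{0\}) \cap (E^* \times \mathbb{R}) = \ell(K_\tau^+) \times \{0\},$$
from which pointedness equivalence follows immediately; the formula for $K_\tau^{a+}\setminus\ell(K_\tau^{a+})$ in the case $\psi(y)>0$ for some $y \in K$ then drops out by intersecting the algebraic identity in Lemma \ref{lem:properties_augmented_dual_cones}($4^\circ$) with $E^* \times \mathbb{R}$.

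For $5^\circ$, the direction $K_\tau^+ \neq \{0\} \Rightarrow K_\tau^{a+}$ nontrivial is immediate from $1^\circ$: pick $x^* \in K_\tau^+\setminus\{0\}$ and note $(x^*, 0) \in K_\tau^{a+}$. For the converse, given $(x^*, \alpha) \in K_\tau^{a+}\setminus\{(0,0)\}$, the only obstacle is ruling out $x^* = 0$; but if $x^* = 0$ then $\alpha > 0$ and the defining inequality forces $\psi(y) \leq 0$ for all $y \in K$, contradicting the assumption that some $y \in K$ has $\psi(y) > 0$. Hence $x^* \in K_\tau^+\setminus\{0\}$. There is no real obstacle here — the proof is essentially an exercise in tracking how intersection with the subspace $E^* \times \mathbb{R}$ interacts with the algebraic operations, with the only mildly delicate point being the $x^* = 0$ exclusion in $5^\circ$, which is handled by the seminorm-positivity hypothesis exactly as in the algebraic counterpart.
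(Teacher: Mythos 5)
Your proposal is correct and matches the paper's intent: the paper simply states that the proof is analogous to that of Lemma \ref{lem:properties_augmented_dual_cones}, and your systematic reduction via the identity $K_\tau^{a\theta} = K^{a\theta} \cap (E^* \times \mathbb{R})$ is a clean, explicit way of carrying out exactly that analogy. All the individual steps (including the set-difference computation in $4^\circ$ and the exclusion of $x^* = 0$ in $5^\circ$) check out.
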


\begin{proof}
The proof is analogous to the proof of Lemma \ref{lem:properties_augmented_dual_cones}.
\end{proof}

Concerning the algebraic interior of the topological augmented dual cone $K^{a+}_\tau$ we have the following result (see also Theorem \ref{th:cor_dual_cone_compact}):

\begin{theorem} \label{th:cor_augmented_dual_cone_compact}
Assume that $(E, \tau)$ is a real topological linear space, and $K \subseteq E$ is a nontrivial cone with $\tau$-compact normlike-base $B_{K}$. Then,
\begin{align*}
{\rm cor}\, K^{a+}_\tau & = \{(x^*, \alpha) \in K^+_\tau \times \mathbb{P} \mid \forall\, y \in B_K:\;  x^*(y) > \alpha\} = K^{a\#}_\tau \cap (E^*\times \mathbb{P}).
\end{align*}
\end{theorem}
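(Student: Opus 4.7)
The plan is to split the claim into the two equalities it asserts. The inner equality, identifying the right-hand set with $K^{a\#}_\tau \cap (E^*\times \mathbb{P})$, is essentially a reformulation via the normlike-base already noted in Remark \ref{rem:adCones_Bases}: since $B_K$ is a normlike-base, $\psi(b)=1$ on $B_K$ and $K\setminus\{0\}=\mathbb{P}\cdot B_K$. Given $(x^*,\alpha)\in K^+_\tau\times\mathbb{P}$ with $x^*(b)>\alpha$ for every $b\in B_K$, each $y = tb \in K\setminus\{0\}$ (with $t>0$, $b\in B_K$) satisfies $x^*(y)-\alpha\psi(y)=t(x^*(b)-\alpha)>0$ and in particular $x^*(y)>0$, so $x^*\in K^\#_\tau$ and the pair lies in $K^{a\#}_\tau\cap(E^*\times\mathbb{P})$. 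The converse inclusion is obtained by evaluating the defining inequality on $B_K$ itself.

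For the main identity, I would first prove $\supseteq$ by adapting the argument from Theorem \ref{th:cor_dual_cone_compact}. Fix $(x^*,\alpha)$ in the right-hand set and an arbitrary direction $(y^*,\beta)\in E^*\times\mathbb{R}$. Because $B_K$ is $\tau$-compact and $x^*,y^*$ are $\tau$-continuous, the Weierstrass-type result (Jahn \cite[Th.~3.26]{Jahn2011}) provides $\delta>0$ with $x^*(b)-\alpha\geq\delta$ on $B_K$ and a bound $C>0$ with $|y^*(b)-\beta|\leq C$ on $B_K$. Choosing $\varepsilon:=\min\{\delta/C,\,\alpha/|\beta|\}$ (dropping the second entry when $\beta\geq 0$, and using that $\alpha>0$ when $\beta<0$) one gets, for every $t\in[0,\varepsilon]$, both $\alpha+t\beta\geq 0$ and $(x^*(b)-\alpha)+t(y^*(b)-\beta)\geq\delta-tC\geq 0$ on $B_K$. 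Scaling by $s>0$ and using $K\setminus\{0\}=\mathbb{P}\cdot B_K$ transfers this to the defining inequality of $K^{a+}_\tau$ on all of $K\setminus\{0\}$, while the case $y=0$ is trivial; hence $(x^*+ty^*,\alpha+t\beta)\in K^{a+}_\tau$ and $(x^*,\alpha)\in{\rm cor}\,K^{a+}_\tau$.

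For the reverse inclusion $\subseteq$, take $(x^*,\alpha)\in{\rm cor}\,K^{a+}_\tau$. Testing the direction $(0,-1)$ immediately forces $\alpha>0$: otherwise $\alpha-t<0$ for some $t\in(0,\varepsilon]$, contradicting $K^{a+}_\tau\subseteq E^*\times\mathbb{R}_+$. Next I need to rule out the possibility $x^*(b_0)=\alpha$ for some $b_0\in B_K$ (the weak inequality $x^*(b_0)\geq\alpha$ is automatic from $(x^*,\alpha)\in K^{a+}_\tau$ and $\psi(b_0)=1$). Mirroring the second half of the proof of Theorem \ref{th:cor_dual_cone_compact}, I would pick $y^*\in E^*$ with $y^*(b_0)<0$, which exists provided the topological dual $E^*$ separates the nonzero point $b_0$ from $0$, and consider the direction $(y^*,0)$. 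Then $(x^*+ty^*)(b_0)-\alpha\psi(b_0)=ty^*(b_0)<0$ for every $t>0$, so no admissible $\varepsilon$ can exist, contradicting $(x^*,\alpha)\in{\rm cor}\,K^{a+}_\tau$.

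The main obstacle is the final separation step, which implicitly requires the same standing assumption used in Theorem \ref{th:cor_dual_cone_compact} that $E^*$ separates points of $E$ (as holds in any separated locally convex space). Once this is granted, the rest is a routine compactness-plus-continuity argument patterned directly on the proof of the dual-cone analogue.
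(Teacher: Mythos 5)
Your treatment of the inner equality and of the inclusion $\supseteq$ (the compactness-plus-Weierstrass perturbation argument) matches the paper's proof essentially line for line and is correct. The problem is in the inclusion ${\rm cor}\, K^{a+}_\tau \subseteq K^{a\#}_\tau \cap (E^*\times\mathbb{P})$. You correctly get $\alpha>0$ from the direction $(0,-1)$, but to upgrade $x^*(b_0)\geq\alpha$ to a strict inequality you reach for a functional $y^*\in E^*$ with $y^*(b_0)<0$, which forces you to assume that $E^*$ separates points of $E$. That assumption is present in Theorem \ref{th:cor_dual_cone_compact} (separated locally convex space) but \emph{not} in Theorem \ref{th:cor_augmented_dual_cone_compact}, which is stated for an arbitrary real topological linear space, where $E^*$ may even be trivial. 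You flag this as "the main obstacle" but leave it unresolved, so the proof as written does not establish the theorem under its stated hypotheses.

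The missing idea is that the augmented component $\alpha$ itself provides a perturbation direction that makes the separation step unnecessary. Since $(x^*,\alpha)\in{\rm cor}\,K^{a+}_\tau$ and $\alpha>0$, test the direction $(0,\alpha)$: there is $\varepsilon>0$ with $(x^*,\alpha)+\varepsilon(0,\alpha)=(x^*,(1+\varepsilon)\alpha)\in K^{a+}_\tau$, which on the normlike-base gives $x^*(y)\geq(1+\varepsilon)\alpha>\alpha$ for all $y\in B_K$ directly. This is exactly how the paper argues, and it works in any topological linear space; note that this direction is unavailable in the non-augmented setting of Theorem \ref{th:cor_dual_cone_compact}, which is precisely why that theorem needs the separatedness hypothesis while this one does not.
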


\begin{proof}

 The second equality is obvious. We are going to prove the first equality.
	
	Take some $(x^*, \alpha) \in {\rm cor}\, K^{a+}_\tau \subseteq K^+_\tau \times \mathbb{R}_+$. Clearly, we have $\alpha > 0$, otherwise there is $\varepsilon > 0$ such that $(x^*, -\varepsilon) = (x^*, \alpha) + \varepsilon (0, -1) \in K^{a+}_\tau$, a contradiction. There is $\varepsilon > 0$ such that
	$
	(x^*, \alpha) + \varepsilon (0, \alpha) = (x^*, (1+ \varepsilon)\alpha) \in K^{a+}_\tau.
	$
	Thus, 
	$
	x^*(y) \geq (1+ \varepsilon)\alpha > \alpha \quad \mbox{for all } y \in B_K.
	$ 
	We conclude that $(x^*, \alpha) \in K^{a\#}_\tau \cap (E^*\times \mathbb{P})$.
	
	Conversely, take $(x^*, \alpha) \in K^+_\tau \times \mathbb{P}$ such that $x^*(x) > \alpha$ for all $x \in B_K$. Take an arbitrary $(y^*, \beta) \in E^* \times \mathbb{R}$. We are going to prove that 
    \begin{equation}
      \label{eq:corKa+CorProperty}
    	(x^*, \alpha) + [0, \varepsilon] \cdot (y^*, \beta) \subseteq K_\tau^{a+} \quad \mbox{for some } \varepsilon > 0.
    \end{equation}
	Because $B_K$ is $\tau$-compact and $x^*$ is $\tau$-continuous there is $\delta >0 $ such that for any $x \in B_K$ we have $x^*(x) \geq \delta > \alpha$, respectively, $x^*(x) - \alpha \geq \delta - \alpha > 0$. 
	Since $B_K$ is $\tau$-compact and $y^*$ is $\tau$-continuous there is $C > 0$ such that $|y^*(x) - \beta| \leq C$ for all $x \in B_K$. Hence, for any $\bar \varepsilon \in [0, \frac{\delta - \alpha}{C}]$ and $x \in B_K$, we get 
	$$
	(x^* + \bar \varepsilon y^*)(x) - \alpha - \bar \varepsilon \beta 
	 \geq \delta - \alpha + \bar \varepsilon (y^*(x) - \beta) \geq \delta - \alpha - \bar \varepsilon C \geq 0.
	$$
    Moreover, if $\beta \geq 0$, then $\alpha + \bar \varepsilon \beta \geq 0$ for all $\bar \varepsilon \geq 0$; otherwise, if $\beta < 0$, then
	$\alpha + \bar \varepsilon \beta \geq 0$ for all $\bar \varepsilon \in [0, -\frac{\alpha}{\beta}]$. 
	Now, define $\varepsilon := \min \{\frac{\delta - \alpha}{C}, -\frac{\alpha}{\beta}\}$. 
	Clearly, 
	$(x^* + \bar \varepsilon y^*)(x) \geq \alpha + \bar \varepsilon \beta \geq 0$ for all $x \in B_K$ and all $\bar \varepsilon \in [0, \varepsilon]$, hence $x^* + [0, \varepsilon] \cdot y^* \subseteq K^+_\tau$.  We conclude that \eqref{eq:corKa+CorProperty} is valid, which shows that $(x^*, \alpha) \in {\rm cor}\, K^{a+}_\tau$.
\end{proof}

For a not necessarily convex cone $K$, the following result gives conditions for the existence of elements of the set $K^{a+} \cap (E' \times \mathbb{P})$ and $K^{a+}_\tau \cap (E^* \times \mathbb{P})$, respectively.

\begin{lemma} \label{lem:KaPlushasPosElement}
Assume that $E$ is a real linear space, and $K$ is a nontrivial cone with normlike-base $B_{K}$.  Then, the following assertions hold:
	\begin{itemize} 
    	\item[$1^\circ$]  If $c := {\rm inf}_{x \in B_{K}}\, x'(x) > 0$ for some $x' \in E'$, then $(x', c)\in  K^{a+} \cap (E' \times \mathbb{P})$.
		\item[$2^\circ$]  Let $(E, \tau)$ be a real topological linear space, and $B_{K}$ be  $\tau$-compact. Then, $(x^*, c)$ for $x^* \in K^{\#}_\tau$ and $c := {\rm inf}_{x \in B_{K}}\, x^*(x)$ is belonging to $K^{a+}_\tau \cap (E^* \times \mathbb{P})$.
	\end{itemize}	     
\end{lemma}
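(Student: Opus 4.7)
The plan is to exploit the defining property of the normlike-base: every $y \in K$ admits either $y = 0$ or a unique representation $y = \lambda b$ with $\lambda > 0$ and $b \in B_K$, and on $B_K$ one has $\psi(b) = 1$. This collapses the quantifier ``$\forall\, y \in K$'' appearing in the definition of $K^{a+}$ to a statement tested only on $B_K$, at which point the claim becomes an elementary rescaling.

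\textbf{Part $1^\circ$.} For any $y \in K \setminus \{0\}$ write $y = \lambda b$ with $\lambda > 0$, $b \in B_K$. Using linearity of $x'$, positive homogeneity of $\psi$, and $\psi(b) = 1$, I would compute
\[
x'(y) - c\,\psi(y) \;=\; \lambda\bigl(x'(b) - c\bigr) \;\geq\; 0,
\]
where the inequality uses $x'(b) \geq c = \inf_{x \in B_K} x'(x)$. For $y = 0$ the inequality is trivial. The same chain also shows $x'(y) \geq \lambda c \geq 0$ for every $y \in K$ (because $c > 0$), so $x' \in K^+$. Together with $c > 0 \in \mathbb{P}$ this gives $(x',c) \in K^+ \times \mathbb{P}$, and the displayed inequality certifies $(x',c) \in K^{a+}$.

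\textbf{Part $2^\circ$.} Here the only non-routine step is to establish $c > 0$; once that is done, Part $1^\circ$ applied with $x' := x^*$ gives $(x^*,c) \in K^{a+} \cap (E' \times \mathbb{P})$, and intersecting with $E^* \times \mathbb{R}$ yields the claim. To get $c > 0$ I would invoke the Weierstra\ss-type attainment theorem cited already in the excerpt (Jahn's Theorem 3.26, also used in the proof of Theorem \ref{th:cor_dual_cone_compact}): since $B_K$ is $\tau$-compact and $x^*$ is $\tau$-continuous, there exists $\bar x \in B_K$ with $c = x^*(\bar x)$. Because $B_K$ is a normlike-base one has $0 \notin B_K$, so $\bar x \in K \setminus \{0\}$; and $x^* \in K^{\#}_\tau$ then forces $c = x^*(\bar x) > 0$.

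\textbf{Main obstacle.} There is no real obstacle; the argument is a direct combination of the base representation (which is purely algebraic) and the compactness attainment (which supplies positivity of the infimum). The only point requiring a moment's care is to notice that the hypothesis ``$x' \in E'$ with $c > 0$'' in Part $1^\circ$ is not supplemented by an assumption that $x' \in K^+$; this latter membership must be deduced, but as shown above it follows for free from $x'(b) \geq c > 0$ on the base together with $K = \mathbb{R}_+ \cdot B_K$.
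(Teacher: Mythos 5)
Your proof is correct and follows essentially the same route as the paper: reduce the condition defining $K^{a+}$ to a test on the base via the representation $y=\lambda b$ (which the paper handles implicitly through Remark \ref{rem:adCones_Bases}), and in $2^\circ$ use $\tau$-compactness of $B_K$ to turn the infimum into an attained, hence positive, minimum before invoking $1^\circ$. Your explicit verification that $x'\in K^+$ is a point the paper's terse proof leaves to that same remark, but there is no substantive difference.
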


\begin{proof}
    \begin{itemize} 
        	\item[$1^\circ$] Take some $x' \in E'$ with $c := {\rm inf}_{x \in B_{K}}\, x'(x) > 0$. Clearly, $x'(x) \geq c > 0$ for all $x \in B_K$, i.e., $(x', c)\in  K^{a+} \cap (E' \times \mathbb{P})$.
        	\item[$2^\circ$] Assume that $x^* \in K_\tau^\#$, i.e., $x^*(y) > 0$ for all $y \in B_K$. If $B_K$ is $\tau$-compact, then $c = {\rm min}_{x \in B_{K}}\, x^*(x) > 0$.
        	By $1^\circ$ we then get $(x^*, c) \in K^{a+}_\tau \cap (E^* \times \mathbb{P})$.
     \end{itemize}   	
\end{proof}

\begin{lemma} \label{lem:KaSharphasPosElements}
Assume that $E$ is a real linear space, and $K$ is a nontrivial cone with normlike-base $B_{K}$.  Then, the following assertions hold:
	\begin{itemize} 
    	\item[$1^\circ$]  Let $(E, \tau)$ be a real topological linear space. If $(x^*, \alpha) \in  K^{a+}_\tau \cap (E^* \times \mathbb{P})$, then $(x^*, \alpha -\varepsilon) \in K^{a\#}_\tau$ for all $\varepsilon \in (0, \alpha]$.
		\item[$2^\circ$] If $(x', \alpha) \in  K^{a+} \cap (E' \times \mathbb{P})$, then $(x', \alpha -\varepsilon) \in K^{a\#}$ for all $\varepsilon \in (0, \alpha]$.
	\end{itemize}	
\end{lemma}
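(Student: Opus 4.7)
The plan is to reduce both assertions to the characterizations via the normlike-base given in Remark \ref{rem:adCones_Bases}, which translate membership in $K^{a+}$ and $K^{a\#}$ into inequalities on $B_K$ alone. Since the topological assertion $1^\circ$ follows from the algebraic assertion $2^\circ$ by intersecting with $E^*\times\mathbb{R}$ (using $K^+\cap E^*=K^+_\tau$ and $K^\#\cap E^*=K^\#_\tau$), I would focus on proving $2^\circ$.

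Fix $(x',\alpha)\in K^{a+}\cap(E'\times\mathbb{P})$ and $\varepsilon\in(0,\alpha]$. By Remark \ref{rem:adCones_Bases}, membership in $K^{a+}$ via the normlike-base is equivalent to $x'\in K^+$ together with $x'(b)\geq\alpha$ for every $b\in B_K$. From $\alpha>0$ I obtain $x'(b)\geq\alpha>0$ on $B_K$, and since Lemma \ref{lem:K=convB}($7^\circ$) yields $K\setminus\{0\}=\mathbb{P}\cdot B_K$ (note $0\notin B_K$ for a normlike-base), positive homogeneity of $x'$ gives $x'(y)>0$ for all $y\in K\setminus\{0\}$. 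Hence $x'\in K^\#$, which is the first requirement for $(x',\alpha-\varepsilon)\in K^{a\#}$.

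It remains to verify $x'(y)-(\alpha-\varepsilon)\psi(y)>0$ for all $y\in K\setminus\{0\}$, together with $\alpha-\varepsilon\in\mathbb{R}_+$ (which holds since $\varepsilon\leq\alpha$). Writing $y=\lambda b$ with $\lambda>0$ and $b\in B_K$ (so $\psi(y)=\lambda\psi(b)=\lambda$), I compute
\[
x'(y)-(\alpha-\varepsilon)\psi(y)=\lambda\bigl(x'(b)-\alpha+\varepsilon\bigr)\geq\lambda\varepsilon>0,
\]
using $x'(b)\geq\alpha$. Equivalently, via Remark \ref{rem:adCones_Bases}, this is simply $x'(b)\geq\alpha>\alpha-\varepsilon$ on $B_K$. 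This establishes $(x',\alpha-\varepsilon)\in K^{a\#}$, proving $2^\circ$.

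For $1^\circ$, if $(x^*,\alpha)\in K^{a+}_\tau\cap(E^*\times\mathbb{P})$, then in particular $(x^*,\alpha)\in K^{a+}\cap(E'\times\mathbb{P})$, so by $2^\circ$ we have $(x^*,\alpha-\varepsilon)\in K^{a\#}$; combined with $x^*\in E^*$ this gives $(x^*,\alpha-\varepsilon)\in K^{a\#}_\tau$. There is no substantial obstacle here: the argument is essentially a bookkeeping exercise once one uses the base-formulation of the augmented cones, and the only point requiring a moment of care is the strict positivity on $B_K$ (which is why $\alpha>0$ is needed and why the normlike-base property $\psi|_{B_K}=1$, combined with $K\setminus\{0\}=\mathbb{P}\cdot B_K$, is exactly what transfers $x'\in K^+$ to $x'\in K^\#$).
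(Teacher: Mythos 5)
Your proof is correct and uses essentially the same argument as the paper: the key step in both is the observation that a normlike-base reduces membership in $K^{a+}$ and $K^{a\#}$ to the inequalities $x'(b)\geq\alpha$ and $x'(b)>\alpha-\varepsilon$ on $B_K$, where $\alpha>0$ forces $x'\in K^{\#}$. The only (immaterial) difference is the direction of reduction between the two parts: the paper proves $1^\circ$ directly and obtains $2^\circ$ by specializing to the convex core topology $\tau_c$, whereas you prove $2^\circ$ directly and obtain $1^\circ$ by intersecting with $E^*\times\mathbb{R}$.
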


\begin{proof}
	\begin{itemize}
		\item[$1^\circ$] Take some $(x^*, \alpha) \in  K_\tau^{a+}$ with $\alpha > 0$, i.e., $x^*(x) \geq \alpha > 0$ for all $x \in B_K$. Clearly, for any $\varepsilon \in (0, \alpha]$, we have $x^*(x) > \alpha - \varepsilon \geq 0 $ for all $x \in B_K$, and so $(x^*, \alpha -\varepsilon) \in  K_\tau^{a\#}$.
		\item[$2^\circ$] Follows by $1^\circ$ (applied to the convex core topology $\tau := \tau_c$).
\end{itemize}	
\end{proof}

In the next lemma, the nonemptyness of certain subsets of $K_\tau^{a+}$ will be related to the condition $0 \notin {\rm cl}_\tau({\rm conv}(B_K))$. 
Later, in Theorem \ref{cor:0notinclSK}, we will be able to characterize the algebraic solidness of $K^{+}_\tau$ and $K^{a+}_\tau$, respectively.

\begin{lemma} \label{lem:0notinclSK}
Assume that $E$ is a real linear space, and $K$ is a nontrivial cone with normlike-base $B_{K}$. Define $S_K := {\rm conv}(B_K)$. Then, the following assertions hold:
	\begin{itemize}
		\item[$1^\circ$]  Let $(E, \tau)$ be a real topological linear space. 
		Then, \\$K_\tau^{a+} \cap (E^* \times \mathbb{P}) \neq \emptyset \Longrightarrow  0 \notin {\rm cl}_\tau\,S_K$. 
		\item[$2^\circ$]
		$K^{a+} \cap (E' \times \mathbb{P}) \neq \emptyset \Longrightarrow  0 \notin {\rm cl}_{\tau_c}\,S_K$, and if $S_K$ is relatively solid, then \\
		$K^{a+} \cap (E' \times \mathbb{P}) \neq \emptyset \Longrightarrow  0 \notin {\rm acl}\,S_K$.
		\item[$3^\circ$] Let $(E, \mathcal{F}, \tau)$ be a real locally convex space.	Then, \\$K_\tau^{a\#} \cap (E^* \times \mathbb{P})\neq \emptyset \iff K_\tau^{a+} \cap (E^* \times \mathbb{P}) \neq \emptyset \iff 0 \notin {\rm cl}_\tau\,S_K$.   
	\end{itemize}
\end{lemma}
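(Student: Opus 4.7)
The plan is to handle the three parts in sequence: part $1^\circ$ rests on a direct base computation, part $2^\circ$ is a specialization to the convex core topology, and part $3^\circ$ uses a Hahn--Banach type strict separation.

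For part $1^\circ$, I take any $(x^*,\alpha) \in K_\tau^{a+} \cap (E^* \times \mathbb{P})$ and invoke the base characterization in Remark~\ref{rem:adCones_Bases}, which gives $x^*(b) \geq \alpha$ for every $b \in B_K$. Linearity of $x^*$ extends this inequality to the convex hull $S_K = \mathrm{conv}(B_K)$, and $\tau$-continuity of $x^*$ extends it further to $\mathrm{cl}_\tau\, S_K$. Since $\alpha > 0$, the origin cannot satisfy $x^*(0) = 0 \geq \alpha$, so $0 \notin \mathrm{cl}_\tau\, S_K$.

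For part $2^\circ$, the first implication follows by applying $1^\circ$ with $\tau = \tau_c$, noting that $(E,\tau_c)^* = E'$ by Remark~\ref{rem:cctopology}. For the refinement under relative solidness of the (convex) set $S_K$, I plan to invoke the identity $\mathrm{acl}\,\Omega = \mathrm{cl}_{\tau_c}\,\Omega$ for any relatively solid convex $\Omega$, recalled at the start of Subsection~\ref{subsec:convex_sets_norms_snorms}: it yields at once that $0 \notin \mathrm{cl}_{\tau_c}\, S_K$ is equivalent to $0 \notin \mathrm{acl}\, S_K$, closing the argument.

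For part $3^\circ$, I would close the loop of three equivalent conditions as follows. The implication $K_\tau^{a\#} \cap (E^* \times \mathbb{P}) \neq \emptyset \Rightarrow K_\tau^{a+} \cap (E^* \times \mathbb{P}) \neq \emptyset$ is immediate from the inclusion $K^{a\#} \subseteq K^{a+}$ (cf.\ Lemma~\ref{lem:Kaw_prop}($1^\circ$)), and the implication $K_\tau^{a+} \cap (E^* \times \mathbb{P}) \neq \emptyset \Rightarrow 0 \notin \mathrm{cl}_\tau\, S_K$ is exactly part $1^\circ$. The nontrivial step is the converse $0 \notin \mathrm{cl}_\tau\, S_K \Rightarrow K_\tau^{a\#} \cap (E^* \times \mathbb{P}) \neq \emptyset$: since $(E,\mathcal{F},\tau)$ is locally convex, the closed convex set $\mathrm{cl}_\tau\, S_K$ and the compact convex singleton $\{0\}$ are disjoint, so a standard strict Hahn--Banach separation theorem yields $x^* \in E^*$ and $\alpha > 0$ with $x^*(y) \geq \alpha > 0 = x^*(0)$ for all $y \in \mathrm{cl}_\tau\, S_K \supseteq B_K$. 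Hence $(x^*,\alpha) \in K_\tau^{a+} \cap (E^* \times \mathbb{P})$, and Lemma~\ref{lem:KaSharphasPosElements}($1^\circ$) applied, e.g., with $\varepsilon = \alpha/2$ upgrades this to $(x^*, \alpha/2) \in K_\tau^{a\#} \cap (E^* \times \mathbb{P})$.

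The main obstacle I expect is this last upgrade: separation alone only furnishes an element of the weak augmented dual cone $K_\tau^{a+}$, and strictness of the inequalities needed for $K_\tau^{a\#}$ is recovered by shrinking $\alpha$ via Lemma~\ref{lem:KaSharphasPosElements}. Local convexity is essential at precisely this step, since it is what guarantees the strict form of Hahn--Banach between a closed convex set and a compact convex set.
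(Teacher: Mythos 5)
Your proposal is correct and follows essentially the same route as the paper: part $1^\circ$ by the same direct base computation, part $2^\circ$ by specializing to $\tau_c$ and using ${\rm acl}\,S_K = {\rm cl}_{\tau_c}\,S_K$ for relatively solid convex sets, and part $3^\circ$ by strict separation of $\{0\}$ from ${\rm cl}_\tau\,S_K$ combined with Lemma \ref{lem:KaSharphasPosElements}($1^\circ$). The only (immaterial) difference is the order in which you close the cycle of equivalences in $3^\circ$.
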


\begin{proof}
\begin{itemize}
	\item[$1^\circ$] Assume that $(x^*, \alpha) \in K_\tau^{a+}$ with $\alpha > 0$. Hence, for any $y \in B_K$, we have 
	$x^*(y) \geq \alpha > 0 = x^*(0)$. Further, we get $x^*(y) \geq \alpha > x^*(0)$ for all $y \in {\rm cl}_{\tau}\,S_K$. This shows that $0 \notin {\rm cl}_{\tau}\,S_K$.
	\item[$2^\circ$] Directly follows by assertion $1^\circ$ by applying it for the convex core topology $\tau := \tau_c$. Notice that ${\rm cl}_{\tau_c}\,S_K = {\rm acl}\,S_K$ if the convex set $S_K$ is assumed to be relatively solid.
	\item[$3^\circ$] Since $K_\tau^{a\#} \subseteq K_\tau^{a+}$ we get $K_\tau^{a\#} \cap (E^* \times \mathbb{P}) \neq \emptyset \Longrightarrow K_\tau^{a+} \cap (E^* \times \mathbb{P})\neq \emptyset$. By Lemma \ref{lem:KaSharphasPosElements} ($1^\circ$) we also get the reverse implication. 
	By assertion $1^\circ$ we further have $K_\tau^{a+} \cap (E^* \times \mathbb{P}) \neq \emptyset \Longrightarrow  0 \notin {\rm cl}_\tau\,S_K$. Now, assume that  $0 \notin {\rm cl}_\tau\,S_K$. Since $\{0\}$ and ${\rm cl}_\tau\,S_K$ are nonempty, convex sets, ${\rm cl}_\tau\,S_K$ is $\tau$-closed and $\{0\}$ is $\tau$-compact, by the separation result in Jahn \cite[Th. 3.20]{Jahn2011} there are $x^* \in E^* \setminus \{0\}$ and $\alpha \in \mathbb{R}$ such that 
	$
	0 = x^*(0) < \alpha \leq x^*(x)$ for all $x \in {\rm cl}_\tau\,S_K \supseteq B_K.
	$
	This means that $(x^*, \alpha) \in K_\tau^{a+} \cap (E^* \times \mathbb{P})$. 
\end{itemize}	
\end{proof}

Using our previous results, we can state conditions for the nonemptyness of $K_\tau^{\#}$, ${\rm cor}\, K^{+}_\tau$ and ${\rm cor}\, K^{a+}_\tau$ for a nontrivial (not necessarily convex and closed) cone $K$.

\begin{theorem} \label{cor:0notinclSK}
Assume that $E$ is a real linear space, and $K$ is a nontrivial cone with normlike-base $B_{K}$. Define $S_K := {\rm conv}(B_K)$. Then, the following assertions hold:
\begin{itemize}
    \item[$1^\circ$] If $0 \notin S_{K}$,
    then $K$ is pointed.
    \item[$2^\circ$] If $(E, \mathcal{F}, \tau)$ is a real locally convex space, then \\$0 \notin {\rm cl}_\tau\,S_K \Longrightarrow K_\tau^{\#} \neq \emptyset$. 
    \item[$3^\circ$] If $(E, \tau)$ is a real topological linear space, $B_K$ is $\tau$-compact, then \\$K_\tau^{\#} \neq \emptyset \Longrightarrow 0 \notin {\rm cl}_\tau\,S_K$.
    \item[$4^\circ$] If $(E, \mathcal{F}, \tau)$ is a real separated locally convex space, and $B_K$ is $\tau$-compact, then \\${\rm cor}\, K^{+}_\tau \neq \emptyset \iff {\rm cor}\, K^{a+}_\tau \neq \emptyset \iff 0 \notin {\rm cl}_\tau\,S_K$.
\end{itemize}
\end{theorem}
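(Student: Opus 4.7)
My plan is to treat the four assertions in order, leaning heavily on the machinery already built up (especially Lemma \ref{lem:0notinclSK} and Theorems \ref{th:cor_dual_cone_compact}, \ref{th:cor_augmented_dual_cone_compact}).

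For $1^\circ$, I argue by contraposition: suppose $K$ is not pointed, so there exists $x \in K\setminus\{0\}$ with $-x \in K\setminus\{0\}$. Because $B_K$ is a normlike-base (Definition \ref{def:baseKnowngeneral} applied via Example \ref{def:normlikebase}), both $x$ and $-x$ admit unique representations $x=\lambda_1 b_1$ and $-x=\lambda_2 b_2$ with $\lambda_i>0$ and $b_i \in B_K$. Adding these, $\lambda_1 b_1+\lambda_2 b_2=0$, and dividing by $\lambda_1+\lambda_2>0$ exhibits $0$ as a convex combination of $b_1,b_2 \in B_K$, so $0\in S_K$, contradicting the hypothesis.

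For $2^\circ$, the implication is essentially the nontrivial direction of Lemma \ref{lem:0notinclSK} $(3^\circ)$: from $0\notin{\rm cl}_\tau S_K$ we obtain $(x^*,\alpha)\in K_\tau^{a+}\cap(E^*\times\mathbb{P})$, so $x^*(y)\geq\alpha>0$ for every $y\in B_K$; since any nonzero $k\in K$ is a positive multiple of some $b\in B_K$ (by the normlike-base property, $0\notin B_K$, and Lemma \ref{lem:K=convB} $(7^\circ)$), this yields $x^*\in K_\tau^\#$. For $3^\circ$, the $\tau$-compactness of $B_K$ combined with $\tau$-continuity of any chosen $x^*\in K_\tau^\#$ allows the Weierstraß theorem to produce $c:=\min_{y\in B_K} x^*(y)>0$. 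The $\tau$-closed, convex half-space $H:=\{y\in E\mid x^*(y)\geq c\}$ then contains $B_K$, hence ${\rm conv}(B_K)=S_K$, hence ${\rm cl}_\tau S_K$; since $x^*(0)=0<c$, the origin is excluded from ${\rm cl}_\tau S_K$.

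For $4^\circ$, I chain together three equivalences under the standing hypotheses (separated locally convex $E$, $\tau$-compact $B_K$): first, Theorem \ref{th:cor_dual_cone_compact} gives ${\rm cor}\,K_\tau^+=K_\tau^\#$, so ${\rm cor}\,K_\tau^+\neq\emptyset\iff K_\tau^\#\neq\emptyset$; second, Theorem \ref{th:cor_augmented_dual_cone_compact} gives ${\rm cor}\,K_\tau^{a+}=K_\tau^{a\#}\cap(E^*\times\mathbb{P})$, and Lemma \ref{lem:0notinclSK} $(3^\circ)$ identifies the nonemptyness of the latter with $0\notin{\rm cl}_\tau S_K$; third, parts $2^\circ$ and $3^\circ$ of the present theorem together show $K_\tau^\#\neq\emptyset\iff 0\notin{\rm cl}_\tau S_K$. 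Combining these three equivalences closes the loop.

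The only real subtlety I anticipate is in $1^\circ$: one must remember that in a normlike-base, $0\notin B_K$ (otherwise the uniqueness of the representation $x=\lambda b$ for $x \in K\setminus\{0\}$ fails), which is what makes the convex combination in $S_K$ genuinely involve $B_K$. Everything else is a bookkeeping exercise in invoking the preceding lemmas.
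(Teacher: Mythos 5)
Your proposal is correct and follows essentially the same route as the paper: part $1^\circ$ by exhibiting $0$ as a convex combination of base points coming from a nonzero element of $\ell(K)$, parts $2^\circ$ and $3^\circ$ by (unpacking) Lemma \ref{lem:0notinclSK} and Lemma \ref{lem:KaPlushasPosElement} together with the Weierstraß argument on the compact base, and part $4^\circ$ by combining Theorems \ref{th:cor_dual_cone_compact} and \ref{th:cor_augmented_dual_cone_compact} with the earlier parts. The only cosmetic difference is in $1^\circ$, where you use uniqueness of base representations for $x$ and $-x$ while the paper uses the symmetry $\psi(-\bar x)=\psi(\bar x)$ to place $-\bar x$ in $B_K$; both are valid.
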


\begin{proof}
\begin{itemize}
    \item[$1^\circ$]  Assuming $K$ is not pointed, i.e., $0 \neq x \in \ell(K) = K \cap (-K)$, we find $0 \neq \bar x \in B_K \cap \ell(K)$, hence $-\bar x \in B_K \cap \ell(K)$. This shows that $0 \in {\rm conv}(B_{K}) = S_{K}$. 
    \item[$2^\circ$] 
    $K_\tau^{a\#} \neq \emptyset$ implies $K_\tau^{\#} \neq \emptyset$, hence $0 \notin {\rm cl}_\tau\,S_K$ implies $K_\tau^{\#} \neq \emptyset$ in view of Lemma \ref{lem:0notinclSK} ($3^\circ$). 
    \item[$3^\circ$] Combining Lemma \ref{lem:KaPlushasPosElement} ($2^\circ$) and Lemma \ref{lem:0notinclSK} ($1^\circ$), we get directly this result. 
     \item[$4^\circ$] By Theorems \ref{th:cor_dual_cone_compact} and  \ref{th:cor_augmented_dual_cone_compact}, we have ${\rm cor}\, K^{+}_\tau = K_\tau^{\#}$ and ${\rm cor}\, K^{a+}_\tau = K_\tau^{a\#} \cap (E^* \times \mathbb{P})$ if $B_K$ is $\tau$-compact. Furthermore, Lemma \ref{lem:0notinclSK} ($3^\circ$) shows that ${\rm cor}\, K^{a+}_\tau \neq \emptyset \iff 0 \notin {\rm cl}_\tau\,S_K$ while $2^\circ$ and $3^\circ$ yield ${\rm cor}\, K^{+}_\tau \neq \emptyset \iff 0 \notin {\rm cl}_\tau\,S_K$.
\end{itemize}
    
\end{proof}

\section{Bishop-Phelps Cones in Normed Spaces} \label{sec:bp_cones}

Let $(E, ||\cdot||)$ be a real normed space. For any $x^* \in E^*$, we consider the well-known formulation of a Bishop-Phelps cone \cite{BP1962} given by
$
C(x^*) := \{ x \in E \mid x^*(x) \geq ||x||\}.
$
Usually one assumes that $||x^*||_* \geq 1$ (see Ha and Jahn \cite[Rem. 2.1]{HaJahn2021}) where $||\cdot||_*$ is the dual norm of $||\cdot||$. The class of Bishop-Phelps cones owns a lot of useful properties and has interesting applications in vector optimization (see, e.g., Eichfelder \cite{Eichfelder2014}, Eichfelder and Kasimbeyli \cite{EichfelderKasimbeyli2014}, Ha \cite{Ha2022}, Ha and Jahn \cite{HaJahn2017, HaJahn2021}, Jahn \cite{Jahn2009}, \cite[p. 159-160]{Jahn2011},  Kasimbeyli \cite{Kasimbeyli2010}, Kasimbeyli and Kasimbeyli \cite{KasKas17}). Next, we like to recall some properties of Bishop-Phelps cones. 
Clearly, $C(x^*)$ is a closed, pointed, convex cone. Moreover, if $||x^*||_* > 1$, then $C(x^*)$ is nontrivial; if $||x^*||_* < 1$, then $C(x^*) = \{0\}$; if $||x^*||_* = 1$, then $C(x^*)$ coincides with
$C_=(x^*) := \{ x \in E \mid x^*(x) = ||x||\}$ (i.e., $C(x^*)$ is a so-called Bishop-Phelps cone given by an equation, see \cite{HaJahn2021}), and if further $E$ is a real reflexive Banach space, then $C(x^*)\, (= C_=(x^*))$ is nontrivial.
Furthermore, if $E$ is a real Banach space, and $C(x^*)$ is nontrivial, then $||x^*||_* = 1 \iff C(x^*) = C_=(x^*)$.
Consider the following subset of $C(x^*)$ given by
$
C_>(x^*) := \{ x \in E \mid x^*(x) > ||x||\}.
$
Clearly, $C(x^*) = C_>(x^*)\; \dot{\cup}\; C_=(x^*)$. It is known that $C_>(x^*) \subseteq {\rm int}\, C(x^*)$, and $C_>(x^*) \neq \emptyset \iff ||x^*||_* > 1$. In the case $||x^*||_* > 1$, we further have ${\rm int}\, C(x^*) = {\rm cor}\, C(x^*) = C_>(x^*) \neq \emptyset$. If $E$ is a real strictly convex Banach space and $||x^*||_* = 1$, then ${\rm int}\, C(x^*) = {\rm int}\, C_=(x^*) = C_>(x^*) = \emptyset$ (see \cite[Prop. 4.1]{HaJahn2021}). However, if $E$ is a not strictly convex Banach space and $||x^*||_* = 1$, then ${\rm int}\, C_=(x^*) \neq \emptyset = C_>(x^*)$ may happen (see \cite[Sec. 4]{HaJahn2021}).

An interesting observation (see also Jahn \cite{Jahn2022}, \cite{Jahn2023}) is that the zero lower-level set of the (separation) function $\varphi_{x^*, \alpha}$ (with $\alpha > 0$), which is defined in \eqref{f12} by
\begin{equation}
    \label{sep_Kasimebyli}
   \varphi_{x^*, \alpha}(x)  = x^*(x) + \alpha ||x|| \quad \mbox{for all } x \in E, 
\end{equation}
is actually a Bishop-Phelps cone $C(-\frac{x^*}{\alpha})$, i.e.,
$$C_{x^*, \alpha}^{\leq} = \{x \in E \mid \varphi_{x^*, \alpha}(x) = x^*(x) + \alpha ||x|| \leq 0\} = C_{\frac{x^*}{\alpha}, 1}^{\leq} = - C(\frac{x^*}{\alpha}) = C(-\frac{x^*}{\alpha}).$$
Thus, the properties of the zero lower-level set $C_{x^*, \alpha}^{\leq}$ follow directly from the properties of the Bishop-Phelps cone $C(-\frac{x^*}{\alpha})$. 
Following some ideas by Ha \cite[Sec. 3.2]{Ha2022} and Jahn \cite[Ex. 2.1]{Jahn2022}, \cite[Rem. 2.1]{Jahn2023} one could consider the (separation) function $\varphi_{x^*}$ associated to the Bishop-Phelps cone $C(x^*)$ defined by
$$
\varphi_{x^*}(x)  := x^*(x) + ||x|| \quad \mbox{for all } x \in E.
$$
Therefore, functions $\varphi_{x^*, \alpha}$ in \eqref{sep_Kasimebyli} with  $x^* \in E^*$ and $\alpha > 0$ (respectively, $\varphi_{x^*}$) are also known as Bishop-Phelps functionals. 

For a given nontrivial cone $K \subseteq E$, let us define the following sets:
\begin{align*}
K^{BP+}_\tau &:= \{x^* \in E^* \mid K \subseteq C(x^*)\}, &
K^{BP\#}_\tau &:= \{x^* \in E^* \mid K \setminus \{0\} \subseteq C_>(x^*)\},\\
K^{BP\circ}_\tau &:= \{x^* \in E^* \mid {\rm icor}\, K \subseteq C_>(x^*)\},
& K^{BP\&}_\tau &:= \{x^* \in E^* \mid K \setminus \ell(K) \subseteq C_>(x^*)\}.
\end{align*}
Notice that $x^* \in K^{BP+}_\tau$ implies  $\{0\} \neq K \subseteq C(x^*)$, and so $||x^*||_* \geq 1$.  Moreover, $C(x^*)$ is $\tau$-solid (and so solid) and $||x^*||_* > 1$ (since $C_>(x^*) \neq \emptyset$) if
$x^* \in K^{BP\#}_\tau$; or if $x^* \in K^{BP\circ}_\tau$ and ${\rm icor}\, K \neq \emptyset$; or if $x^* \in K^{BP\&}_\tau$ and $K \neq \ell(K)$. 

Assume that $K$ is a nontrivial, pointed cone in $E$. A Bishop-Phelps cone $C(x^*)$ with $x^* \in K^{BP\#}_\tau$ is a so-called dilating cone for $K$ since $C(x^*)$ is convex cone with $K \setminus \{0\} \subseteq C_>(x^*) = {\rm cor}\, C(x^*) = {\rm int}\, C(x^*).$
It is known that dilating cones play an important role in vector optimization (see, e.g., Durea \cite{Durea2023}, G\"unther, Khazayel and Tammer \cite{Khazayel2021b}, Guti\'{e}rrez, Huerga and Novo \cite{GutHuergaNovo2018}, Henig \cite{Henig2}, Huerga, Jadamba and Sama \cite{HuergaJadambaSama2019},  Huerga, Khan and Sama \cite{HuergaKhanSama2019}, Khan and Sama \cite{KhanSama2013}, and Khan, Tammer and Z\u{a}linescu \cite[Sec. 2.4]{Khanetal2015}).
The mentioned dilating cone property also means 
$-K \setminus \{0\} \subseteq C_{x^*, 1}^{<}  = {\rm cor}\, C_{x^*, 1}^{\leq} = {\rm int}\, C_{x^*, 1}^{\leq}.$

In Section \ref{sec:cone_separation_lcs}, we will derive strict cone separation theorems.
More precisely, we state conditions such that the cone $-K$ and a nontrivial cone $A \subseteq E$ can be strictly separated (by a convex cone of Bishop-Phelps type).  For the choice $x^* \in K^{BP\#}_\tau$ one already has 
$-K \setminus \{0\}  \subseteq {\rm int}\, C_{x^*, 1}^{\leq} = \{x \in E \mid \varphi_{x^*}(x) = x^*(x) + ||x|| < 0\}.$
For strict cone separation of $-K$ and $A$, we have further to ensure that
$A\setminus \{0\} \subseteq E \setminus C_{x^*, 1}^{\leq} = \{x \in E \mid \varphi_{x^*}(x) = x^*(x) + ||x|| > 0\}.$
Notice that the following assertions are equivalent:
\begin{itemize}
    \item $-K$ and $A$ are strictly separated by the zero lower-level set of $\varphi_{x^*, \alpha}$, namely the set $C_{x^*, \alpha}^{\leq}\; = C_{\frac{x^*}{\alpha}, 1}^{\leq}$, for some $(x^*, \alpha) \in K^{a\#}_\tau \cap (E^* \times \mathbb{P})$.
    \item $-K$ and $A$ are strictly separated by the zero lower-level set of $\varphi_{x^*}$, namely the set $C_{x^*, 1}^{\leq}$, for some $x^* \in K^{BP\#}_\tau$.
    \item $-K \setminus \{0\} \subseteq {\rm int}\, C(y^*)$ and $A \setminus \{0\} \subseteq E \setminus C(y^*)$ for some $y^* \in -K^{BP\#}_\tau$.
    \item $K \setminus \{0\} \subseteq {\rm int}\, C(x^*)$ and $-A \setminus \{0\} \subseteq E \setminus C(x^*)$ for some $x^* \in K^{BP\#}_\tau$.
\end{itemize}
In the context of (weak, proper, strict) separation of two cones by the zero lower-level set of $\varphi_{x^*, \alpha}$, respectively, $\varphi_{x^*}$ , similar relationships between $K^{a+}_\tau \cap (E^* \times \mathbb{P})$ and $K^{BP+}_\tau$, respectively, $K^{a\circ}_\tau \cap (E^* \times \mathbb{P})$ and $K^{BP\circ}_\tau$, respectively, $K^{a\&}_\tau \cap (E^* \times \mathbb{P})$ and  $K^{BP\&}_\tau$ hold true.

In our presented separation approach, the convex sets $S_{-K} := {\rm conv}(-B_{K}) = {\rm conv}(B_{-K})$ and $S_{A}^0 := {\rm conv}(B_{A} \cup \{0\})$ for norm-bases (or more general norm-like bases) $B_{K}$ and $B_{A}$ of $K$ and $A$, as well as the condition 
\begin{equation}
\label{eq:-clSAcapclSK=empty}
({\rm cl}\,S_{A}^0) \cap  ({\rm cl}\, S_{-K}) = \emptyset
\end{equation}
will be of special interest. Figure \ref{fig:cone_separation} shows an example in a real normed space ($E, ||\cdot||_2$) where $||\cdot||_2$ denotes the Euclidean norm, $-K$ and $A$ are nontrivial, closed, pointed, solid cones that satisfy the conditions $A \cap (-K \setminus \{0\}) = \emptyset$, ${\rm cl}\,S_{A}^0 = S_{A}^0$ and  $0 \notin {\rm cl}\, S_{-K} = S_{-K}$.  
The separation condition  \eqref{eq:-clSAcapclSK=empty} is only valid in the left image of Figure \ref{fig:cone_separation} where $-K$ is convex and $A$ is nonconvex. One can also see that $-K$ and $A$ are strictly separated by (the boundary of) the Bishop-Phelps cone $C(y^*)$. To be precise, we have in this left image,
\begin{align*}
-K & \subseteq C(y^*) \quad \mbox{ and }\quad A \subseteq \mathbb{R}^2 \setminus {\rm int}\, C(y^*),\\
-K & \setminus \{0\} \subseteq {\rm int}\, C(y^*)\quad \mbox{ and } \quad A \setminus \{0\} \subseteq \mathbb{R}^2 \setminus C(y^*)
\end{align*}
for some $y^* \in -K^\#_\tau$. Notice that $x^* = - y^* \in K^{BP\#}_\tau$ and $C(y^*) = C(-x^*) = C_{x^*, 1}^{\leq}$ as well as ${\rm int}\, C(y^*) = {\rm int}\, C_{x^*, 1}^{\leq}$.

\begin{figure}[h!]
\resizebox{1\hsize}{!}{
\input{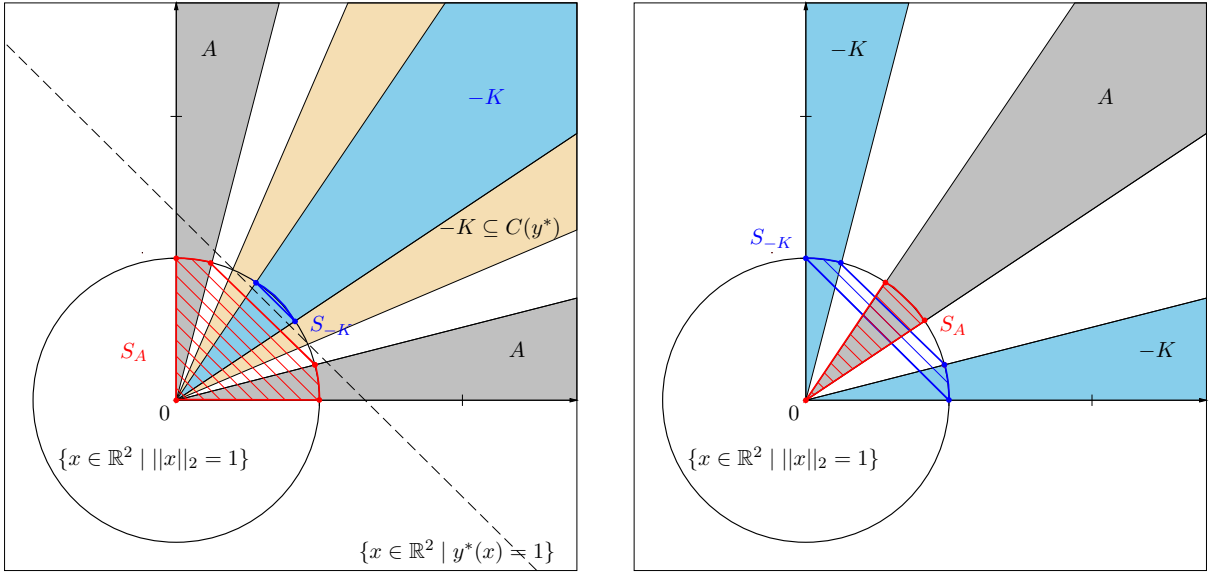}
\label{fig:cone_separation}
}
\caption{Cone Separation of two nontrivial, closed, pointed, solid cones $-K$ and $A$ that satisfy $A \cap (-K \setminus \{0\}) = \emptyset$, ${\rm cl}\,S_{A}^0 = S_{A}^0$ and $0 \notin {\rm cl}\, S_{-K} = S_{-K}$ in the real normed space $(\mathbb{R}^2, ||\cdot||_2)$: \\
(left image) $-K$ is convex, $A$ is nonconvex, \eqref{eq:-clSAcapclSK=empty} is valid;\\
 (right image) $-K$ is nonconvex, $A$ is convex, \eqref{eq:-clSAcapclSK=empty} is not valid.}
\end{figure}

For deriving our separation theorems in the next section, we follow basically and we extend the approach by Kasimbeyli \cite{Kasimbeyli2010} which is based on the separating function $\varphi_{x^*, \alpha}$ defined by \eqref{sep_Kasimebyli},
where $(x^*, \alpha)$ is taken from the augmented dual cone $K^{a+}_\tau$. Our generalized separating function given in \eqref{f31}, namely 
$\varphi_{x', \alpha}$ for some $(x', \alpha) \in K^{a+}$, will always involve a linear function $x' \in K^+$, a seminorm $\psi$ and augmentation parameter $\alpha \geq 0$. 

\section{Cone Separation Theorems in (Topological) Linear Spaces} \label{sec:cone_separation_lcs}

In this section, assume that $E$ is a real linear space, and $\psi: E \to \mathbb{R}$ is a seminorm. We like to present new nonlinear cone separation theorems in real linear spaces, real topological linear spaces, and real locally convex spaces, respectively. The main tool in the proof of our separation results (namely Theorems \ref{th:sep_main}, \ref{cor:sep_main_int}, \ref{th:sep_main_cor},  \ref{th:sep_main_int}, \ref{th:sep_main_top}) is the application of classical separation theorems (compare Jahn \cite[Th. 3.14, 3.16, 3.20]{Jahn2011}, and Mill\'{a}n and Roshchina \cite[Prop. 4.12]{Roshchina2021}).

\subsection{Weak Cone Separation Theorems}

In the first part of the section, we are going to study the case of weak cone separation.

\begin{theorem}\label{th:sep_main}
Assume that $E$ is a real linear space, and $K, A \subseteq E$ are nontrivial cones with normlike-bases $B_{K}$ and $B_{A}$. Define $S_{-K} := {\rm conv}(-B_{K})$ and $S_{A}^0 := {\rm conv}(B_{A} \cup \{0\})$.
If 
one of the following conditions hold:
\begin{itemize}
\item $S_{-K}$ is solid and $S_{A}^0 \cap {\rm cor}\,S_{-K} = \emptyset$, 
\item $S_{A}^0$ is solid and $({\rm cor}\,S_{A}^0) \cap S_{-K} = \emptyset$, 
\item $S_{-K}$ and $S_{A}^0$ are relatively solid and $({\rm icor}\,S_{A}^0) \cap ({\rm icor}\,S_{-K}) = \emptyset$, 
\end{itemize}
then there exists $(x', \alpha) \in K^{a+}$ with $x' \neq 0$  such that
\begin{equation}
\label{eq:sep_alg_<=}
x'(a) + \alpha \psi(a) \geq  0 \geq x'(k) + \alpha \psi(k)   \quad \mbox{for all } a \in A \mbox{ and }k \in -K.
\end{equation}
\end{theorem}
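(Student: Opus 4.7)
The plan is to reduce the claim to a single application of a classical (algebraic) Hahn--Banach separation theorem for the two convex sets $S_{-K}$ and $S_A^0$, and then to extract the augmented dual cone element $(x',\alpha)$ from the structure of their normlike-bases. Under each of the three hypotheses listed, the appropriate version of the cited separation theorems (Jahn \cite[Th.~3.14, 3.16]{Jahn2011} or Mill\'an--Roshchina \cite[Prop.~4.12]{Roshchina2021} in the relatively solid convex case) yields a nonzero $x' \in E'$ and a real $\beta$ with
\[
x'(s) \leq \beta \leq x'(t) \quad \text{for all } s \in S_{-K},\; t \in S_A^0.
\]
Because $0 \in S_A^0$, the right-hand inequality forces $\beta \leq 0$, so I set $\alpha := -\beta \geq 0$.

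Next, I extract the cone-separation inequality \eqref{eq:sep_alg_<=} from the two halves of the above inequality using the normlike-base property $\psi(b)=1$ for $b \in B_K \cup B_A$. Taking $s = -b$ with $b \in B_K \subseteq -S_{-K}$, one gets $x'(b) \geq \alpha$ for every $b \in B_K$. For arbitrary $k \in -K$, write $k = -\lambda b$ with $\lambda \geq 0$, $b \in B_K$ (using that $B_K$ is a normlike-base for $K$); then
\[
x'(k) + \alpha \psi(k) = \lambda\bigl(-x'(b) + \alpha\bigr) \leq 0,
\]
which also gives $x'\in K^+$ and, after rewriting as $x'(y) - \alpha\psi(y) \geq 0$ for $y = -k \in K$, shows $(x',\alpha)\in K^{a+}$. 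Symmetrically, $\beta \leq x'(c)$ for $c \in B_A$ gives $x'(c) \geq -\alpha$, and for $a = \mu c \in A$ with $\mu \geq 0$, $c \in B_A$,
\[
x'(a) + \alpha \psi(a) = \mu\bigl(x'(c) + \alpha\bigr) \geq 0,
\]
which is the remaining half of \eqref{eq:sep_alg_<=}.

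The main obstacle is organizational rather than technical: ensuring that in each of the three cases listed, the corresponding classical separation theorem can actually be invoked for the purely algebraic pair $S_{-K}, S_A^0$ and delivers a functional $x' \in E'$ (not just some extension from a subspace). The first two cases are covered by the standard algebraic separation for a solid convex set and a disjoint convex set (Jahn \cite[Th.~3.14]{Jahn2011}), while the third relatively solid case requires the stronger version of Mill\'an--Roshchina \cite[Prop.~4.12]{Roshchina2021} (equivalently, the separation result \cite[Cor.~2.28]{Khazayel2021a} already used elsewhere in the paper). Once the correct separation statement is in place, the remainder of the proof is the straightforward base-wise computation sketched above, and no topological assumption on $E$ is needed.
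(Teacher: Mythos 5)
Your proposal is correct and follows essentially the same route as the paper's proof: a single application of the classical algebraic separation theorem (Jahn Th.~3.14, or Mill\'an--Roshchina Prop.~4.12 in the relatively solid case) to the convex sets $S_{-K}$ and $S_A^0$, using $0 \in S_A^0$ to fix the sign of the separating level $\beta$, setting $\alpha := -\beta$, and then propagating the inequalities from the normlike-bases to the full cones by positive homogeneity of $x'$ and $\psi$. The base-wise computations and the verification that $(x',\alpha) \in K^{a+}$ match the paper's argument step for step.
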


\begin{proof}
By the linear separation result in Jahn \cite[Th. 3.14]{Jahn2011} (respectively, in Mill\'{a}n and Roshchina \cite[Prop. 4.12]{Roshchina2021} for the intrinsic case) there are $x' \in E' \setminus \{0\}$ and $\delta \in \mathbb{R}$ such that
\begin{equation}
\label{eq:linSepConSetsSAandSK}
x'(a) \geq \delta \geq x'(k)   \quad \mbox{for all } a \in S_{A}^0 \mbox{ and }k \in S_{-K}.
\end{equation}	
Since $0 \in S_{A}^0$ we get $\delta \leq 0$. Then, by \eqref{eq:linSepConSetsSAandSK} we have $x'(k) \leq 0$ for all $k \in S_{-K} \supseteq -B_K$, hence $x'(k) \geq 0$ for all $k \in K$, i.e., $x' \in K^+ \setminus \{0\}$. Define $\alpha := - \delta\; (\geq 0)$.
From the first inequality in \eqref{eq:linSepConSetsSAandSK} and for elements $a \in B_{A}$ (i.e., $\psi(a) = 1$), we obtain
$
x'(a) + \alpha \psi(a)  = x'(a) - \delta \geq 0$  for all $a \in B_{A}$.
Because $B_A$ is a base for $A$, and $x'$ and $\psi$ are positively homogeneous, it follows $
 x'(a) + \alpha \psi(a)  \geq 0$ for all $a \in A$.   
From the second inequality in \eqref{eq:linSepConSetsSAandSK} and for elements $k \in -B_{K}$ (i.e., $\psi(k) = 1$), we get
$
0 \geq x'(k) - \delta = x'(k) + \alpha \psi(k)$   for all $k \in -B_{K}$.
Since $-B_K = B_{-K}$ is a base for $-K$, and $x'$ and $\psi$ are positively homogeneous, it follows
$
0 \geq x'(k) + \alpha \psi(k)$ for all $k \in -K$.   
We conclude that $(x', \alpha) \in K^{a+}$ and \eqref{eq:sep_alg_<=} hold true.
\end{proof}

\begin{theorem}\label{cor:sep_main_int}
	Assume that $(E, \tau)$ is a real topological linear space, and $K, A \subseteq E$ are nontrivial cones with normlike-bases $B_{K}$ and $B_{A}$. Define $S_{-K} := {\rm conv}(-B_{K})$ and $S_{A}^0 := {\rm conv}(B_{A} \cup \{0\})$.
	If one of the following conditions is satisfied:
	\begin{itemize}
		\item $S_{-K}$ is $\tau$-solid and $S_{A}^0 \cap {\rm int}_\tau \,S_{-K} = \emptyset$, 
		\item $S_{A}^0$ is $\tau$-solid and $({\rm int}_\tau\,S_{A}^0) \cap S_{-K} = \emptyset$, 
	\end{itemize}
	then there exists $(x^*, \alpha) \in K_\tau^{a+}$ with $x^* \neq 0$  such that
	\begin{equation}
	\label{eq:sep_top_<=}
	x^*(a) + \alpha \psi(a) \geq  0 \geq x^*(k) + \alpha \psi(k)   \quad \mbox{for all } a \in A \mbox{ and }k \in -K.
	\end{equation}
\end{theorem}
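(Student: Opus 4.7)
The plan is to mirror the proof of Theorem~\ref{th:sep_main} step-by-step, replacing the purely algebraic linear separation theorem by its topological counterpart. The convex sets $S_{-K}$ and $S_A^0$ remain convex, and either hypothesis guarantees exactly that one of them is $\tau$-solid with interior disjoint from the other. This is the standing hypothesis of the Eidelheit-type topological separation theorem (cf. Jahn \cite[Th.~3.16]{Jahn2011}), so its application yields $x^* \in E^* \setminus \{0\}$ and $\delta \in \mathbb{R}$ with
$$x^*(a) \ge \delta \ge x^*(k) \quad \text{for all } a \in S_A^0 \text{ and } k \in S_{-K}.$$

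From here the remainder of the argument is essentially bookkeeping copied from the proof of Theorem~\ref{th:sep_main}. Since $0 \in S_A^0$, one gets $\delta \le 0$, so $\alpha := -\delta \ge 0$ is admissible. The inequality $x^*(k) \le 0$ for all $k \in -B_K \subseteq S_{-K}$ together with positive homogeneity of $x^*$ gives $x^*(y) \ge 0$ on all of $K$, i.e. $x^* \in K^+ \cap E^* = K^+_\tau$, and in particular $x^* \ne 0$.

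Using that $\psi \equiv 1$ on the normlike-bases $B_A$ and $-B_K = B_{-K}$, the two separation inequalities restricted to these bases read
$$x^*(a) + \alpha \psi(a) = x^*(a) - \delta \ge 0, \qquad x^*(k) + \alpha \psi(k) = x^*(k) - \delta \le 0,$$
and positive homogeneity of $x^*$ and $\psi$ extends them to every $a \in A$ and every $k \in -K$. The resulting inequality $x^*(y) - \alpha \psi(y) \ge 0$ for all $y \in K$ certifies $(x^*, \alpha) \in K^{a+}$, and together with $x^* \in E^*$ we conclude $(x^*, \alpha) \in K^{a+}_\tau$, which yields \eqref{eq:sep_top_<=}.

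There is no genuine obstacle here beyond choosing the correct classical topological separation result: the hypotheses have been tailored exactly so that Eidelheit's theorem applies directly, after which the passage from the separating $(x^*, \delta)$ to an element of $K^{a+}_\tau$ is the same positive-homogeneity argument used in the algebraic version.
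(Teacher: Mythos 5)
Your proposal is correct and is exactly the route the paper takes: its proof of this theorem consists of the single remark that one repeats the argument of Theorem~\ref{th:sep_main} with Jahn's topological separation result [Th.~3.16] in place of the algebraic one, which is precisely what you do. The bookkeeping (deriving $\delta \le 0$ from $0 \in S_A^0$, identifying $x^* \in K^+_\tau \setminus \{0\}$, and extending the base inequalities to $A$ and $-K$ by positive homogeneity to certify $(x^*,\alpha) \in K^{a+}_\tau$) matches the paper's argument step for step.
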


\begin{proof}
	The proof is similar to the proof of Theorem \ref{th:sep_main} by using the linear separation result in Jahn \cite[Th. 3.16]{Jahn2011}.
\end{proof}

\subsection{Proper Cone Separation Theorems}

In the second part of the section, we concentrate on the case of proper cone separation. 

\begin{theorem} \label{th:sep_main_cor}
	Assume that $E$ is a real linear space, $K, A \subseteq E$ are nontrivial cones with normlike-bases $B_{K}$ and $B_{A}$, and $K$ is solid. Define $S_{-K} := {\rm conv}(-B_{K})$ and $S_{A}^0 := {\rm conv}(B_{A} \cup \{0\})$. If $S_{-K}$ is solid and
	$
	S_{A}^0\cap {\rm cor}\,S_{-K} = \emptyset,
	$
	then there exists $(x', \alpha) \in K^{a\circ}$ such that
	\begin{align}
	x'(a) + \alpha \psi(a) & \geq  0 \geq x'(k) + \alpha \psi(k)   \quad \mbox{for all } a \in A \mbox{ and }k \in -K, \label{eq:separation_nonlinear_cor1}\\
	x'(a) + \alpha \psi(a) & \geq  0 > x'(k) + \alpha \psi(k)   \quad \mbox{for all } a \in A \mbox{ and }k \in {\rm cor}(-K).\label{eq:separation_nonlinear_cor2}
	\end{align}
	In particular, we have $A \cap {\rm cor}(-K) = \emptyset$ and $0 \notin {\rm cor}\,S_{-K}$.
\end{theorem}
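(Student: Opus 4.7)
The strategy is to re-run the proof of Theorem \ref{th:sep_main} while invoking the proper (core-strict) form of the algebraic linear separation theorem of Jahn \cite[Th. 3.14]{Jahn2011}. Applied to the convex sets $S_{-K}$ (solid by assumption) and $S_A^0$, whose intersection with ${\rm cor}\,S_{-K}$ is empty, this delivers $x' \in E' \setminus \{0\}$ and $\delta \in \mathbb{R}$ with $x'(k) \leq \delta \leq x'(a)$ for all $k \in S_{-K}$ and $a \in S_A^0$, together with the additional strict inequality $x'(k) < \delta$ for every $k \in {\rm cor}\,S_{-K}$. Since $0 \in S_A^0$ forces $\delta \leq 0$, setting $\alpha := -\delta \geq 0$ and recycling the normlike-base / positive-homogeneity argument from Theorem \ref{th:sep_main} produces $x' \in K^+ \setminus \{0\}$ together with the weak separation \eqref{eq:separation_nonlinear_cor1}.

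For the strict inequality \eqref{eq:separation_nonlinear_cor2}, I would take $k \in {\rm cor}(-K)$ and use Lemma \ref{lem:K=convB}($2^\circ$) applied to the cone $-K$ generated by $-B_K$ (with ${\rm conv}(-B_K) = S_{-K}$ solid) to write $k = tz$ with $t > 0$ and $z \in {\rm cor}\,S_{-K}$. The core-strict separation then gives $x'(z) < -\alpha$, while $z \in {\rm conv}(-B_K)$ combined with convexity of $\psi$ and $\psi \equiv 1$ on $-B_K$ produces the elementary bound $\psi(z) \leq 1$. Because $\alpha \geq 0$, this yields
\[
x'(z) + \alpha \psi(z) \leq x'(z) + \alpha < 0,
\]
and scaling by $t > 0$ establishes $x'(k) + \alpha \psi(k) < 0$.

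To conclude $(x', \alpha) \in K^{a\circ}$, I would use that solidness of $K$ implies ${\rm icor}\,K = {\rm cor}\,K$ and, by Lemma \ref{lem:K+AndicorK}($2^\circ$), that $K^+$ is pointed, so $\ell(K^+) = \{0\}$ and $x' \in K^+ \setminus \ell(K^+)$. For $y \in {\rm cor}\,K$ the point $-y$ lies in ${\rm cor}(-K)$, and rearranging \eqref{eq:separation_nonlinear_cor2} (using $\psi(-y) = \psi(y)$) gives $x'(y) - \alpha \psi(y) > 0$, which is exactly the defining condition of $K^{a\circ}$. The two auxiliary conclusions follow at once: $A \cap {\rm cor}(-K) = \emptyset$ because the signs in \eqref{eq:separation_nonlinear_cor1} and \eqref{eq:separation_nonlinear_cor2} contradict each other, and $0 \notin {\rm cor}\,S_{-K}$ because $0 \in S_A^0$ is disjoint from ${\rm cor}\,S_{-K}$ by hypothesis.

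The principal obstacle is preserving strict negativity in \eqref{eq:separation_nonlinear_cor2}: the strict core-inequality from Hahn-Banach delivers only $x'(z) < -\alpha$, and the nonnegative term $\alpha \psi(z)$ could in principle wipe out that strictness. The decisive observation is that $\psi \leq 1$ on $S_{-K}$ — a one-line consequence of the convexity of the seminorm $\psi$ combined with $\psi \equiv 1$ on the generating set $-B_K$ — which provides precisely the slack needed to absorb the augmentation term while preserving strict negativity.
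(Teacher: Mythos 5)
Your proposal is correct and follows essentially the same route as the paper: apply the algebraic separation theorem of Jahn [Th.\ 3.14] to $S_{-K}$ and $S_A^0$ with the core-strict inequality, set $\alpha := -\delta$, and use the bound $\psi \leq 1$ on ${\rm conv}(-B_K)$ together with Lemma \ref{lem:K=convB}($2^\circ$) (i.e., ${\rm cor}(-K) \subseteq \mathbb{P}\cdot{\rm cor}\,S_{-K}$) to preserve strict negativity after adding the augmentation term. Your explicit verification that $x' \in K^+\setminus\ell(K^+)$ via the pointedness of $K^+$ (Lemma \ref{lem:K+AndicorK}($2^\circ$)) and ${\rm icor}\,K={\rm cor}\,K$ is a slightly more careful justification of the membership $(x',\alpha)\in K^{a\circ}$ than the paper spells out, but it is the same argument.
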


\begin{proof}
	By the linear separation result in Jahn \cite[Th. 3.14]{Jahn2011} (applied for the two nonempty, convex sets $S_{-K}$ and $S_{A}^0$) there are $x' \in E' \setminus \{0\}$ and $\delta \in \mathbb{R}$ such that
	\begin{align}
	\label{eq:lineSepConSetSAandSK1}
	x'(a) & \geq \delta \geq x'(k)   \quad \mbox{for all } a \in S_{A}^0 \mbox{ and }k \in S_{-K},\\
	\label{eq:lineSepConSetSAandSK2}
	x'(a) & \geq \delta > x'(k)   \quad \mbox{for all } a \in S_{A}^0 \mbox{ and }k \in {\rm cor}\,S_{-K}.
	\end{align}	
	Since $0 \in S_{A}^0$ we get $\delta \leq 0$, hence $x' \in K^+ \setminus \{0\}$. 	Define $\alpha := - \delta\; (\geq 0)$.
	From the first inequality in \eqref{eq:lineSepConSetSAandSK1} and for $a \in B_{A}$ (i.e., $\psi(a) = 1$), we conclude
	$
	x'(a) + \alpha \psi(a) = x'(a) - \delta \geq 0$ for all $a \in B_{A}$,
	hence
	$
	x'(a) + \alpha \psi(a) \geq 0$ for all $a \in A$.
	Further, from the second inequality in \eqref{eq:lineSepConSetSAandSK2}, it follows 
	$
	0 > x'(k) - \delta =  x'(k) + \alpha$ for all $k \in {\rm cor}\,S_{-K}$.
	Since $\psi(k) \in [0,1]$ for $k \in {\rm cor}\,S_{-K} \subseteq {\rm conv}(-B_K)$, we get
	$
	0 > x'(k) + \alpha \psi(k)$ for all $k \in {\rm cor}\,S_{-K}$.
	Due to the fact that $x'$ and $\psi$ are positively homogeneous, we derive
	$
	0 > x'(k) + \alpha \psi(k)$  for all $k \in  \mathbb{P} \cdot {\rm cor}\,S_{-K}$.
	By Lemma \ref{lem:K=convB} ($2^\circ$) we have ${\rm cor}(-K) \subseteq \mathbb{P} \cdot {\rm cor}\,S_{-K}$. Thus, \eqref{eq:separation_nonlinear_cor2} is valid. Taking into account that ${\rm cor}(-K) = - {\rm cor}\, K$, we get 
	$
	x'(k) - \alpha \psi(k) > 0$ for all $k \in  {\rm cor}\, K$,
	i.e., $(x', \alpha) \in K^{a\circ}$ holds true. 
	Using the second inequality in \eqref{eq:lineSepConSetSAandSK1} and similar ideas as above, one gets 
	$
	0 \geq x'(k) + \alpha \psi(k)$ for all $k \in  \mathbb{R}_+ \cdot S_{-K} \supseteq \mathbb{R}_+ \cdot (-B_K) = -K$,
	which shows that \eqref{eq:separation_nonlinear_cor1} is valid.
\end{proof}

\begin{theorem} \label{th:sep_main_int}
	Assume that $(E, \tau)$ is a topological linear space, $K, A \subseteq E$ are nontrivial cones with normlike-bases $B_{K}$ and $B_{A}$, and $K$ is $\tau$-solid. Define $S_{-K} := {\rm conv}(-B_{K})$ and $S_{A}^0 := {\rm conv}(B_{A} \cup \{0\})$. If $S_{-K}$ is $\tau$-solid and
	$
	S_{A}^0 \cap {\rm int}_\tau \,S_{-K} = \emptyset,
	$
	then there exists $(x^*, \alpha) \in K_\tau^{a\circ}$ such that
	\begin{align*}
	x^*(a) + \alpha \psi(a) & \geq  0 \geq x^*(k) + \alpha \psi(k)   \quad \mbox{for all } a \in A \mbox{ and }k \in -K,\\
	x^*(a) + \alpha \psi(a) & \geq  0 > x^*(k) + \alpha \psi(k)   \quad \mbox{for all } a \in A \mbox{ and }k \in {\rm int}_\tau(-K).
	\end{align*}
	In particular, we have $A \cap {\rm int}_\tau(-K) = \emptyset$ and $0 \notin {\rm int}_\tau \,S_{-K}$.  
\end{theorem}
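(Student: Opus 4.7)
The plan is to mirror the proof of Theorem \ref{th:sep_main_cor} almost verbatim, replacing the algebraic separation result of Jahn \cite[Th.~3.14]{Jahn2011} by its topological counterpart \cite[Th.~3.16]{Jahn2011}. Concretely, I would apply the latter to the two nonempty convex sets $S_A^0$ and $S_{-K}$, using that $S_{-K}$ is $\tau$-solid together with the hypothesis $S_A^0 \cap {\rm int}_\tau\,S_{-K} = \emptyset$. This yields $x^* \in E^* \setminus \{0\}$ and $\delta \in \mathbb{R}$ with
\[
x^*(a) \geq \delta \geq x^*(k) \quad \text{for all } a \in S_A^0 \text{ and } k \in S_{-K},
\]
together with the strict form $\delta > x^*(k)$ whenever $k \in {\rm int}_\tau\,S_{-K}$. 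Since $0 \in S_A^0$ we get $\delta \leq 0$, so $x^* \in K_\tau^+ \setminus \{0\}$, and I set $\alpha := -\delta \geq 0$.

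Next I would evaluate these inequalities at base points: on $B_A$ and $-B_K$ the seminorm $\psi$ equals $1$, so the non-strict parts yield $x^*(a)+\alpha \geq 0$ on $B_A$ and $x^*(k)+\alpha \leq 0$ on $-B_K$; the positive homogeneity of $x^*$ and $\psi$ then extends these to $x^*(a)+\alpha\psi(a) \geq 0$ on all of $A$ and $x^*(k)+\alpha\psi(k) \leq 0$ on all of $-K$. For the strict line, note that on $S_{-K} = {\rm conv}(-B_K)$ one has $\psi \leq 1$ by sublinearity, hence for $k \in {\rm int}_\tau\,S_{-K}$ one obtains $x^*(k) + \alpha\psi(k) \leq x^*(k) + \alpha < 0$; homogeneity propagates this to $\mathbb{P} \cdot {\rm int}_\tau\,S_{-K}$. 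Because $S_{-K}$ is $\tau$-solid and convex, the preliminaries give ${\rm cor}\,S_{-K} = {\rm int}_\tau\,S_{-K}$, and Lemma \ref{lem:K=convB} ($2^\circ$) delivers ${\rm cor}(-K) \subseteq \mathbb{P} \cdot {\rm cor}\,S_{-K}$. Using ${\rm int}_\tau(-K) \subseteq {\rm cor}(-K)$, the strict inequality therefore holds on all of ${\rm int}_\tau(-K)$, which is the second displayed inequality of the theorem and also, upon replacing $k$ by $-y$ and using $\psi(-y)=\psi(y)$, the condition $x^*(y) - \alpha\psi(y) > 0$ for every $y \in {\rm cor}\,K$.

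The only subtle point -- and the step I expect to require a short extra remark -- is the passage from ${\rm cor}\,K$ to ${\rm icor}\,K$ needed to conclude $(x^*,\alpha) \in K_\tau^{a\circ}$. This is handled by the observation that $\tau$-solidness of $K$ gives ${\rm int}_\tau\,K \neq \emptyset$; since every neighbourhood of $0$ in a topological linear space is absorbing, ${\rm aff}\,K = E$, and hence ${\rm cor}\,K = {\rm icor}\,K$. The two supplementary assertions $A \cap {\rm int}_\tau(-K) = \emptyset$ and $0 \notin {\rm int}_\tau\,S_{-K}$ then drop out immediately from the strict inequalities (taking $a = k$, respectively $a = 0$, in the relevant line). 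I do not foresee any further obstacle beyond the careful bookkeeping of topological versus algebraic interiors just described.
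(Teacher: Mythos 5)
Your proposal is correct and follows essentially the same route as the paper, which proves this theorem by repeating the argument of Theorem \ref{th:sep_main_cor} with Jahn's topological separation theorem \cite[Th.~3.16]{Jahn2011} in place of the algebraic one, together with the observations ${\rm cor}\,S_{-K}={\rm int}_\tau\,S_{-K}$ and ${\rm int}_\tau(-K)\subseteq{\rm cor}(-K)\subseteq\mathbb{P}\cdot{\rm cor}\,S_{-K}$. Your explicit remark that $\tau$-solidness of $K$ forces ${\rm aff}\,K=E$ and hence ${\rm cor}\,K={\rm icor}\,K$ (needed for membership in $K_\tau^{a\circ}$) is a detail the paper leaves implicit, and it is handled correctly.
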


\begin{proof}
	The proof is similar to the proof of Theorem \ref{th:sep_main_cor} by using the  separation result in Jahn \cite[Th. 3.16]{Jahn2011}.
	Notice that ${\rm cor}\,S_{-K} = {\rm int}_\tau \,S_{-K}$ holds true for the $\tau$-solid, convex set $S_{-K}$ in $(E, \tau)$, hence ${\rm int}_\tau(-K) \subseteq {\rm cor}(-K) \subseteq \mathbb{P} \cdot {\rm cor}\,S_{-K} = \mathbb{P} \cdot {\rm int}_\tau \,S_{-K}$.
\end{proof}

Let us consider for some moment the convex case. 

\begin{theorem} 
	\label{th:iffCorSAandSK}
	Assume that $E$ is a real linear space, $K, A \subseteq E$ are nontrivial, convex cones with normlike-bases $B_{K}$ and $B_{A}$, and $K$ is solid.  Define $S_{-K} := {\rm conv}(-B_{K})$ and $S_{A}^0 := {\rm conv}(B_{A} \cup \{0\})$. 
	Suppose that $S_{-K}$ is solid.	
	Then, the following assertions are equivalent:
	\begin{itemize}
		\item[$1^\circ$] $S_{A}^0 \cap {\rm cor}\,S_{-K} = \emptyset$.		
		\item[$2^\circ$] $A \cap {\rm cor}(-K) = \emptyset$.
		\item[$3^\circ$] There exists $x' \in K^+ \setminus \{0\}$ such that
		$
		x'(a)  \geq  0 > x'(k)$ for all $a \in A$ and $k \in {\rm cor}(-K)$.
		\item[$4^\circ$] There exists $(x', \alpha) \in K^{a\circ}$ such that $x'(a) + \alpha \psi(a)  \geq  0 > x'(k) + \alpha \psi(k)$ for all $a \in A$ and $k \in {\rm cor}(-K)$.
	\end{itemize}
\end{theorem}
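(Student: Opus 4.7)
My plan is to close a cyclic chain $1^\circ \Rightarrow 2^\circ \Rightarrow 3^\circ \Rightarrow 4^\circ \Rightarrow 2^\circ$ and also verify $2^\circ \Rightarrow 1^\circ$. Several of these implications are short: $2^\circ \Rightarrow 1^\circ$ follows from the set-inclusions $S_A^0 \subseteq A$ (since $A$ is a convex cone containing $B_A \cup \{0\}$) and ${\rm cor}\, S_{-K} \subseteq {\rm cor}(-K)$ (monotonicity of ${\rm cor}$ under $S_{-K} \subseteq -K$); $4^\circ \Rightarrow 2^\circ$ holds because an element $a \in A \cap {\rm cor}(-K)$ would satisfy both $x'(a) + \alpha\psi(a) \geq 0$ and $x'(a) + \alpha\psi(a) < 0$; and $3^\circ \Rightarrow 4^\circ$ follows by taking $\alpha := 0$: since $K$ is solid, Lemma \ref{lem:K+AndicorK} ($2^\circ$) gives $K^+$ pointed, so $x' \in K^+ \setminus \{0\} = K^+ \setminus \ell(K^+)$, and $x'(y) > 0$ on ${\rm icor}\, K = {\rm cor}\, K$ is the same statement as $x'(k) < 0$ on ${\rm cor}(-K)$ from $3^\circ$.

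For $2^\circ \Rightarrow 3^\circ$ I would invoke the classical convex separation result (Jahn \cite[Th. 3.14]{Jahn2011}) for the two nonempty convex cones $A$ and $-K$ under the disjointness $A \cap {\rm cor}(-K) = \emptyset$ (with ${\rm cor}(-K) \neq \emptyset$). This gives $x' \in E' \setminus \{0\}$ and $\delta \in \mathbb{R}$ with $x'(a) \geq \delta \geq x'(k)$ (strictly on ${\rm cor}(-K)$); the fact that $0 \in A \cap (-K)$ forces $\delta = 0$, and then $x'(k) \leq 0$ on $-K$ means $x' \in K^+ \setminus \{0\}$, giving exactly $3^\circ$.

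The main obstacle is $1^\circ \Rightarrow 2^\circ$, and I expect this to require a nontrivial normalization via the seminorm $\psi$. I argue by contradiction: suppose $x \in A \cap {\rm cor}(-K)$. Since $K$ is nontrivial we have $-K \neq E$ hence $0 \notin {\rm cor}(-K)$, so $x \neq 0$, and the normlike-base property yields $x = \lambda b_A$ with $b_A \in B_A$, $\lambda > 0$. By Lemma \ref{lem:K=convB} ($2^\circ$, $3^\circ$) applied to the convex cone $-K = \mathbb{R}_+ \cdot (-B_K)$ (with $S_{-K} = {\rm conv}(-B_K)$ solid) we get ${\rm cor}(-K) = \mathbb{P} \cdot {\rm cor}\, S_{-K}$; since ${\rm cor}(-K)$ is invariant under positive scalars, $b_A \in {\rm cor}(-K)$, so $b_A = \nu t$ for some $\nu > 0$ and $t \in {\rm cor}\, S_{-K}$.

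The decisive step is a seminorm estimate on $\nu$. Since $b_A \in B_A$ and $B_A$ is the normlike-base associated to $\psi$, $\psi(b_A) = 1$. Writing $t \in S_{-K} = {\rm conv}(-B_K)$ as a finite convex combination $t = \sum_i \mu_i (-b_i^K)$ with $b_i^K \in B_K$, sublinearity and symmetry of $\psi$ give $\psi(t) \leq \sum_i \mu_i \psi(-b_i^K) = \sum_i \mu_i \cdot 1 = 1$. Hence $1 = \psi(b_A) = \nu \psi(t) \leq \nu$, so $\nu \geq 1$. Therefore $\tfrac{1}{\nu} \in (0, 1]$ and
\[
t = \tfrac{1}{\nu} b_A = \tfrac{1}{\nu} b_A + \bigl(1 - \tfrac{1}{\nu}\bigr) \cdot 0 \in {\rm conv}(\{0, b_A\}) \subseteq S_A^0,
\]
producing $t \in S_A^0 \cap {\rm cor}\, S_{-K}$, contradicting $1^\circ$. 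The rest is bookkeeping; the only delicate point is precisely this conversion of a positive scalar multiple into a genuine convex combination with $0$, which is why the presence of the basepoint $0$ in $S_A^0$ and the seminorm normalization of $B_A$ and $B_K$ are both essential.
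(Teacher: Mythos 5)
Your proof is correct, but it closes the equivalence along a genuinely different cycle than the paper. The paper gets $1^\circ \Rightarrow 4^\circ$ by invoking Theorem \ref{th:sep_main_cor} (i.e., by separating the convex hulls $S_{-K}$ and $S_A^0$ and doing the base normalization there), then closes $4^\circ \Rightarrow 2^\circ \Rightarrow 1^\circ$ and treats $2^\circ \iff 3^\circ$ separately via Jahn's Theorem 3.14 applied to $A$ and $-K$. You instead prove $1^\circ \Rightarrow 2^\circ$ head-on by contradiction: rescale $x \in A \cap {\rm cor}(-K)$ to $b_A \in B_A$, use Lemma \ref{lem:K=convB} ($2^\circ$) to write $b_A = \nu t$ with $t \in {\rm cor}\, S_{-K}$, and exploit $\psi(b_A)=1$ together with $\psi \le 1$ on ${\rm conv}(-B_K)$ to force $\nu \ge 1$, so that $t = \nu^{-1} b_A \in {\rm conv}(\{0,b_A\}) \subseteq S_A^0$; this is an elementary argument that needs no separation theorem at all (and, notably, no convexity of $A$, so it also reproduces the "in particular" clause of Theorem \ref{th:sep_main_cor} by bare hands). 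You then recover $3^\circ$ from $2^\circ$ by a single application of the classical separation theorem to $A$ and $-K$ themselves, and pass to $4^\circ$ with the degenerate choice $\alpha = 0$, which is legitimate since $(K^+\setminus\ell(K^+))\times\{0\} \subseteq K^{a\circ}$ and $K^+$ is pointed for solid $K$. The only trade-off worth noting: with $\alpha = 0$ your witness for $4^\circ$ is just a linear functional, whereas the paper's route through Theorem \ref{th:sep_main_cor} can deliver $\alpha = -\delta$ possibly positive and the stronger pair of inequalities \eqref{eq:separation_nonlinear_cor1}--\eqref{eq:separation_nonlinear_cor2}; since the theorem only asserts existence of some $(x',\alpha) \in K^{a\circ}$, your shortcut is perfectly adequate for the stated equivalence.
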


\begin{proof}
	Obviously, $4^\circ \Longrightarrow 2^\circ$ is valid. Moreover, $2^\circ \Longrightarrow 1^\circ$ follows by the fact that 
	$
	A \cap {\rm cor}(-K) = A \cap (\mathbb{P} \cdot {\rm cor}({\rm conv}(-B_K)) \supseteq S_{A}^0 \cap {\rm cor}\,S_{-K}
	$
	taking into account the convexity of $-K$ and $A$, and Lemma \ref{lem:K=convB} ($2^\circ$, $3^\circ$).	
	Theorem \ref{th:sep_main_cor} provides the implication $1^\circ \Longrightarrow 4^\circ$. The equivalence $2^\circ \iff 3^\circ$ is a direct consequence of the linear separation result in \cite[Th. 3.14]{Jahn2011} (under the convexity of $-K$ and $A$).
\end{proof}

\begin{remark}
	Theorem \ref{th:sep_main_cor} shows that the condition $S_{A}^0 \cap {\rm cor}\,S_{-K} = \emptyset$ implies $A \cap {\rm cor}(-K) = \emptyset$ for not necessarily convex cones $K, A \subseteq E$. Moreover, Theorem \ref{th:iffCorSAandSK} shows that the reverse implication is valid under convexity assumptions on $K$ and $A$.
	Without the convexity assumptions, the condition $A \cap {\rm cor}(-K) = \emptyset$ does not imply the condition $S_{A}^0 \cap {\rm cor}\,S_{-K} = \emptyset$, as the following example shows.
\end{remark}

\begin{example} \label{ex:counterex_AcapcorK}
	Consider the normed space $(E = \mathbb{R}^2, ||\cdot||_2)$, where $||\cdot||_2$ denotes the Euclidean norm. Define $x^{(\lambda)} := (1- \lambda, \lambda)$ for all $\lambda \in [0,1]$ and put 
	$\Omega^1 := \mathbb{R}_+ \cdot [x^{(0)}, x^{(\frac{1}{5})}]$, $\Omega^2 := \mathbb{R}_+ \cdot [x^{(\frac{2}{5})}, x^{(\frac{3}{5})}]$ and $\Omega^3 := \mathbb{R}_+ \cdot [x^{(\frac{4}{5})}, x^{(1)}]$.
	Moreover, consider two cones $K := -(\Omega^1 \cup \Omega^3)$ and $A := \Omega^2$. Notice that  $K$ and $A$ are nontrivial, pointed, solid cones, $A$ is convex but $K$ is nonconvex.
	Then, $S_{-K}$ is solid, $A \cap {\rm cor}(-K) \subseteq A \cap (-K \setminus \{0\}) = \emptyset$ but
	$
	S_{A}^0 \cap {\rm cor}\,S_{-K}\neq \emptyset.
	$
	The example is illustrated in the right image of Figure \ref{fig:cone_separation}. 
\end{example}

\begin{theorem} 
	Assume that $(E, \tau)$ is a real topological linear space, $K, A \subseteq E$ are nontrivial, convex cones with normlike-bases $B_{K}$ and $B_{A}$, and $K$ is $\tau$-solid. Define $S_{-K} := {\rm conv}(-B_{K})$ and $S_{A}^0 := {\rm conv}(B_{A} \cup \{0\})$. 
	Suppose that $S_{-K}$ is $\tau$-solid.	
	Then, the following assertions are equivalent:	
	\begin{itemize}
		\item[$1^\circ$] $S_{A}^0 \cap {\rm int}_\tau \,S_{-K} = \emptyset$.		
		\item[$2^\circ$] $A \cap {\rm int}_\tau(-K) = \emptyset$.
		\item[$3^\circ$] There exists $x^* \in K^+_\tau \setminus \{0\}$ such that $x^*(a)  \geq  0 > x^*(k)$ for all $a \in A$ and $k \in {\rm int}_\tau(-K)$.
		\item[$4^\circ$] There exists $(x^*, \alpha) \in K_\tau^{a\circ}$ such that $x^*(a) + \alpha \psi(a)  \geq  0 > x^*(k) + \alpha \psi(k)$ for all $a \in A$ and $k \in {\rm int}_\tau(-K)$.
	\end{itemize}
\end{theorem}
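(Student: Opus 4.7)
I would mirror the proof of the algebraic counterpart (Theorem \ref{th:iffCorSAandSK}) by establishing the chain $4^\circ \Rightarrow 2^\circ \Rightarrow 1^\circ \Rightarrow 4^\circ$ together with the equivalence $2^\circ \Leftrightarrow 3^\circ$, consistently replacing ${\rm cor}$ by ${\rm int}_\tau$ and invoking the topological versions of the supporting lemmas.

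The implication $4^\circ \Rightarrow 2^\circ$ is immediate: a hypothetical $\bar a \in A \cap {\rm int}_\tau(-K)$ inserted simultaneously as $a$ and $k$ in $4^\circ$ produces the contradiction $x^*(\bar a) + \alpha \psi(\bar a) \geq 0$ and $x^*(\bar a) + \alpha \psi(\bar a) < 0$. For $2^\circ \Rightarrow 1^\circ$, the key observation is the identity ${\rm int}_\tau(-K) = \mathbb{P} \cdot {\rm int}_\tau\,S_{-K}$. Indeed, since $-K$ and $S_{-K}$ are convex and $\tau$-solid, on each of them the topological interior coincides with the algebraic core; combining this with Lemma \ref{lem:K=convB} ($2^\circ$ and $3^\circ$) applied to the convex cone $-K = \mathbb{R}_+ \cdot (-B_K)$ yields ${\rm cor}(-K) = \mathbb{P} \cdot {\rm cor}\,S_{-K}$, and the identity follows. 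Using also $S_A^0 \subseteq \mathbb{R}_+ \cdot B_A \subseteq A$ (from convexity of $A$ via Lemma \ref{lem:K=convB} ($1^\circ$)), we get
$$
S_A^0 \cap {\rm int}_\tau\,S_{-K} \subseteq A \cap \bigl(\mathbb{P} \cdot {\rm int}_\tau\,S_{-K}\bigr) = A \cap {\rm int}_\tau(-K) = \emptyset.
$$
The implication $1^\circ \Rightarrow 4^\circ$ is then exactly the conclusion of Theorem \ref{th:sep_main_int}, whose hypotheses (nontrivial cones with normlike-bases, $K$ and $S_{-K}$ being $\tau$-solid, plus the disjointness) are all in force.

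For $2^\circ \Leftrightarrow 3^\circ$, the direction $3^\circ \Rightarrow 2^\circ$ is an immediate contradiction argument. For $2^\circ \Rightarrow 3^\circ$, apply the classical topological separation theorem (Jahn \cite[Th. 3.16]{Jahn2011}) to the convex sets $A$ and ${\rm int}_\tau(-K)$ (the latter nonempty by $\tau$-solidness of $K$): there exist $x^* \in E^* \setminus \{0\}$ and $\delta \in \mathbb{R}$ with $x^*(a) \geq \delta > x^*(k)$ for all $a \in A$ and $k \in {\rm int}_\tau(-K)$. Since $0 \in A$ we have $\delta \leq 0$; scaling $a$ by $t > 0$ forces $x^*(a) \geq 0$ on $A$. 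Similarly, for $k \in {\rm int}_\tau(-K)$ and $t > 0$, the cone property gives $tk \in {\rm int}_\tau(-K)$, hence $t\,x^*(k) < \delta \leq 0$ for all $t > 0$, which rules out $x^*(k) \geq 0$ and yields $x^*(k) < 0$. Finally, continuity of $x^*$ together with ${\rm cl}_\tau(-K) = {\rm cl}_\tau({\rm int}_\tau(-K))$ (valid for convex $\tau$-solid sets) gives $x^*(-m) \leq 0$ on $K$, so $x^* \in K^+_\tau \setminus \{0\}$.

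The only nontrivial point is the identity ${\rm int}_\tau(-K) = \mathbb{P} \cdot {\rm int}_\tau\,S_{-K}$ used in $2^\circ \Rightarrow 1^\circ$: everything else is either a direct translation of the algebraic case or a standard appeal to a separation theorem plus cone scaling. Once this identity is in hand via Lemma \ref{lem:K=convB} and the convex/$\tau$-solid interplay between core and topological interior, the remaining implications are routine.
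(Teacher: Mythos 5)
Your proposal is correct and follows essentially the same route as the paper: the paper's proof simply says it is analogous to the algebraic Theorem \ref{th:iffCorSAandSK}, using ${\rm int}_\tau(-K) = {\rm cor}(-K)$ and ${\rm int}_\tau\,S_{-K} = {\rm cor}\,S_{-K}$ for $\tau$-solid convex sets, which is precisely the chain $4^\circ \Rightarrow 2^\circ \Rightarrow 1^\circ \Rightarrow 4^\circ$ (via Lemma \ref{lem:K=convB} and Theorem \ref{th:sep_main_int}) plus $2^\circ \Leftrightarrow 3^\circ$ (via Jahn's topological separation theorem) that you carry out.
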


\begin{proof}
	The proof is similar to the proof of Theorem \ref{th:iffCorSAandSK}. Notice that ${\rm int}_\tau(-K) = {\rm cor}(-K)$ and ${\rm cor}\,S_{-K} = {\rm int}_\tau \,S_{-K}$ for $\tau$-solid, convex sets $-K$ and $S_{-K}$ in $(E, \tau)$.
\end{proof}

\subsection{Strict Cone Separation Theorems}

In the remaining part of the section, we are going to study the case of strict cone separation. 

\begin{theorem} \label{th:sep_main_top}
	Assume that $(E, \mathcal{F}, \tau)$ is a real locally convex space,  $K, A \subseteq E$ are nontrivial cones with normlike-bases $B_{K}$ and  $B_{A}$, and $K$ is pointed.
	Define  
	$S_{-K} := {\rm conv}(-B_{K})$ and $S_{A}^0 := {\rm conv}(B_{A} \cup \{0\})$. Suppose that one of the sets ${\rm cl}_\tau\, S_{-K}$ and ${\rm cl}_\tau\,S_{A}^0$ is $\tau$-compact.
	If 
	$
	({\rm cl}_\tau\,S_{A}^0) \cap ({\rm cl}_\tau\, S_{-K}) = \emptyset,
	$
	then there exists $(x^*, \alpha) \in K_\tau^{a\#} \cap (E^* \times \mathbb{P})$ such that
	\begin{align}
	x^*(a) + \alpha \psi(a) & \geq  0 \geq x^*(k) + \alpha \psi(k)   \quad \mbox{for all } a \in A \mbox{ and }k \in -K, \label{sep_non_top_1}\\
	x^*(a) + \alpha \psi(a) &  >  0 > x^*(k) + \alpha \psi(k)   \quad \mbox{for all } a \in A\setminus \{0\} \mbox{ and }k \in -K \setminus \{0\}. \label{sep_non_top_2}
	\end{align}
	In particular, we have $A \cap (-K \setminus \{0\}) = \emptyset$ and $0 \notin {\rm cl}_\tau\, S_{-K}$. 
\end{theorem}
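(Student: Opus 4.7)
The plan is to reduce the statement to a classical strict separation theorem for two disjoint closed convex sets in a locally convex space (with one of them compact). The sets $S_{-K}$ and $S_A^0$ are nonempty (since $B_K, B_A$ are nonempty by hypothesis) and convex by construction, so their $\tau$-closures are nonempty, $\tau$-closed, and convex. Since $({\rm cl}_\tau\, S_A^0) \cap ({\rm cl}_\tau\, S_{-K}) = \emptyset$ and one of them is $\tau$-compact, Jahn's strict separation theorem (Jahn \cite[Th. 3.20]{Jahn2011}) applies and yields $x^* \in E^* \setminus \{0\}$ together with some $\delta \in \mathbb{R}$ such that
\[
x^*(s) < \delta < x^*(a) \quad \mbox{for all } s \in {\rm cl}_\tau\, S_{-K} \mbox{ and } a \in {\rm cl}_\tau\, S_A^0.
\]

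Next I would exploit that $0 \in S_A^0 \subseteq {\rm cl}_\tau\, S_A^0$ to infer $\delta < x^*(0) = 0$, so defining $\alpha := -\delta$ gives $\alpha \in \mathbb{P}$. Since $-B_K \subseteq S_{-K}$, for every $b \in B_K$ I get $x^*(-b) < -\alpha$, i.e.\ $x^*(b) > \alpha = \alpha\,\psi(b)$ (because $\psi(b) = 1$ on the normlike-base). Using that $B_K$ is a normlike-base for $K$, every $y \in K \setminus \{0\}$ has the form $y = \lambda b$ with $\lambda > 0$ and $b \in B_K$, and the positive homogeneity of $x^*$ and $\psi$ yields $x^*(y) - \alpha\,\psi(y) = \lambda(x^*(b) - \alpha) > 0$. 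This establishes $(x^*, \alpha) \in K_\tau^{a\#} \cap (E^* \times \mathbb{P})$ (note that $x^* \in K^\#$ follows automatically since $x^*(y) > \alpha\,\psi(y) \geq 0$ on $K \setminus \{0\}$). The analogous argument for $a \in B_A$ (using $x^*(a) > \delta = -\alpha$, hence $x^*(a) + \alpha\,\psi(a) > 0$) extends by positive homogeneity to $x^*(a) + \alpha\,\psi(a) > 0$ for all $a \in A \setminus \{0\}$.

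Combining both, the strict inequalities \eqref{sep_non_top_2} hold: for $k \in -K \setminus \{0\}$, write $k = -y$ with $y \in K \setminus \{0\}$, so $x^*(k) + \alpha\,\psi(k) = -(x^*(y) - \alpha\,\psi(y)) < 0$. The weak inequalities \eqref{sep_non_top_1} then follow by including $0$ on both sides (where the expression vanishes). The ``In particular'' part is immediate: $A \cap (-K \setminus \{0\}) = \emptyset$ since any common nonzero point would yield both $x^*(\cdot) + \alpha\,\psi(\cdot) > 0$ and $< 0$, while $0 \notin {\rm cl}_\tau\, S_{-K}$ follows directly from the hypothesis together with $0 \in S_A^0$.

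The only substantive step is the invocation of the strict separation theorem, and the main care needed is in verifying that its hypotheses are fulfilled (nonemptyness, convexity, $\tau$-closedness, disjointness, $\tau$-compactness of one set); once $x^*$ and $\delta$ are produced, the remaining arithmetic is straightforward book-keeping using the defining property $\psi(b) = 1$ on the normlike-bases and the positive homogeneity of both $x^*$ and $\psi$, in the same spirit as the proofs of Theorems \ref{th:sep_main} and \ref{th:sep_main_cor}.
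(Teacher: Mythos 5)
Your proposal is correct and follows essentially the same route as the paper: apply Jahn's strict separation theorem to the disjoint closed convex sets ${\rm cl}_\tau\,S_{A}^0$ and ${\rm cl}_\tau\,S_{-K}$ (one of them compact), use $0 \in S_A^0$ to force the separating level $\delta$ to be negative, set $\alpha := -\delta > 0$, and transfer the inequalities from the normlike-bases to the whole cones by positive homogeneity of $x^*$ and $\psi$. The only cosmetic difference is that the paper records two thresholds $\beta > \gamma$ and then chooses $\delta$ strictly between them, whereas you take a single strict separating value directly; the verification that $(x^*,\alpha) \in K_\tau^{a\#} \cap (E^* \times \mathbb{P})$ and the ``in particular'' conclusions are handled the same way.
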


\begin{proof}
    First, notice that ${\rm cl}_\tau\,S_{-K}$ and ${\rm cl}_\tau\,S_{A}^0$ are nonempty, $\tau$-closed, convex sets.  
	By the strict linear separation of convex sets in locally convex spaces (see Jahn \cite[Th. 3.20]{Jahn2011}) there are $x^* \in E^* \setminus \{0\}$ and $\gamma, \beta \in \mathbb{R}$ such that
	\begin{equation}
	\label{eq:linSepConSetSAandCLSK}
	x^*(a) \geq \beta > \gamma \geq x^*(k)   \quad \mbox{for all } a \in {\rm cl}_\tau\,S_{A}^0 \mbox{ and }k \in {\rm cl}_\tau\,S_{-K}.
	\end{equation}	
	Since $0 \in S_{A}^0$ we get $\beta \leq 0$. Then, by \eqref{eq:linSepConSetSAandCLSK} we have $x^*(k) < 0$ for all $k \in {\rm cl}_\tau\,S_{-K}  \supseteq -B_K$, hence $x^*(k) > 0$ for all $k \in K \setminus \{0\}$, i.e., $x^* \in K^\#_\tau$. Take some $\delta \in (\gamma, \beta) \subseteq (-\infty, 0)$. 
	Define $\alpha := - \delta\; (> 0)$.
	From the first inequality in \eqref{eq:linSepConSetSAandCLSK}, the fact that $\delta < \beta$, and for $a \in B_A$ (i.e., $\psi(a) = 1$), we conclude
	$
	x^*(a) + \alpha \psi(a) = x^*(a) - \delta  >  0$   for all $a \in B_{A}$.
	Because $B_A$ is a base for $A$, and $x^*$ and $\psi$ are positively homogeneous, it follows $x^*(a) + \alpha \psi(a) \geq 0$  for all $a \in A$, and $x^*(a) + \alpha \psi(a)  > 0$ for all $a \in A \setminus \{0\}$.
	From the third inequality in \eqref{eq:linSepConSetSAandCLSK}, the fact that $\delta > \gamma$, and for $k \in -B_{K}$ (i.e., $\psi(k) = 1$), we get
	$
	0 > x^*(k) - \delta = x^*(k) + \alpha \psi(k)$ for all $k \in -B_{K}$.
	Since $B_K$ is a base for $K$, and $x^*$ and $\psi$ are positively homogeneous, we get $0 \geq x^*(k) + \alpha \psi(k)$ for all $k \in -K$, and $0 > x^*(k) + \alpha \psi(k)$ for all $k \in -K \setminus \{0\}$.
	We conclude that both conditions \eqref{sep_non_top_1} and \eqref{sep_non_top_2} are valid. It is easy to see that we also derive $(x^*, \alpha) \in K^{a\#}_\tau \cap (E^* \times \mathbb{P})$.
\end{proof}

\begin{remark} Notice, due to $-B_K = B_{-K}$ we have $S_{-K} = {\rm conv}(-B_{K}) = {\rm conv}(B_{-K})$. 
Since, for any set $\Omega \subseteq E$ the equality $-{\rm cl}({\rm conv}(\Omega)) = {\rm cl}({\rm conv}(-\Omega))$ holds true (see Swartz \cite[Ch. 11, Ex. 6]{Swartz}), it is easy to check that 
$({\rm cl}\,S_{A}^0) \cap ({\rm cl}\, S_{-K}) = \emptyset$ if and only if 
$({\rm cl}\,S_{A}^0) \cap (-{\rm cl}\, S_K) = \emptyset$, 
where $S_{K} := {\rm conv}(B_{K})$. Thus, $({\rm cl}\,S_{A}^0) \cap ({\rm cl}\, S_{-K}) = \emptyset$ implies also $0 \notin {\rm cl}\, S_K$. In view of Theorem \ref{cor:0notinclSK} ($1^\circ$),
it is justified simply to assume the pointedness of the nontrivial cone $K$ in Theorem \ref{th:sep_main_top} and also in upcoming cone separation results.
\end{remark}

\begin{theorem}
	Assume that $E$ is a real linear space, $K, A \subseteq E$ are nontrivial cones with  normlike-bases $B_{K}$ and  $B_{A}$, and $K$ is pointed.  Define the sets 
	$S_{-K} := {\rm conv}(-B_{K})$ and $S_{A}^0 := {\rm conv}(B_{A} \cup \{0\})$.	
	If there exists $(x', \alpha) \in K^{a\#}$ such that
	\begin{align}\label{eq:sep_Ksetminus0_algebraic}
	x'(a) + \alpha \psi(a) &  \geq  0 > x'(k) + \alpha \psi(k)   \quad \mbox{for all } a \in A \mbox{ and }k \in -K \setminus \{0\},
	\end{align}
	then $S_{A}^0 \cap S_{-K} = \emptyset$.
\end{theorem}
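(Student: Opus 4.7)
The plan is to proceed by contradiction: assume there exists some $z \in S_A^0 \cap S_{-K}$, and derive a contradiction with the hypothesized separation inequality. The core idea is to evaluate the linear functional $x'$ on $z$ using two different convex representations of $z$ and then sandwich $x'(z)$ between two incompatible bounds coming from the base normalization.

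First, since $z \in S_A^0 = {\rm conv}(B_A \cup \{0\})$, I would write $z = \sum_{i=1}^m \lambda_i b_i$ with $b_i \in B_A \cup \{0\}$, $\lambda_i \geq 0$, and $\sum_{i=1}^m \lambda_i = 1$; each $b_i$ lies in $A$ (since $A$ is a cone, $0 \in A$). Similarly, since $z \in S_{-K} = {\rm conv}(-B_K)$, write $z = \sum_{j=1}^n \mu_j c_j$ with $c_j \in -B_K$, $\mu_j \geq 0$, and $\sum_{j=1}^n \mu_j = 1$. Because $K$ is pointed and $B_K$ is a normlike-base, one has $0 \notin B_K$, hence every $c_j$ actually belongs to $-K \setminus \{0\}$.

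Next, apply the hypothesized separation inequalities to the pieces of these representations. Since $b_i \in A$, one has $x'(b_i) + \alpha \psi(b_i) \geq 0$, and as $\psi(b_i) \in \{0,1\}$ this yields $x'(b_i) \geq -\alpha \psi(b_i) \geq -\alpha$ for every $i$. Since $c_j \in -K \setminus \{0\}$ and $\psi(c_j) = 1$ (using symmetry of the seminorm on $-B_K = B_{-K}$), the strict inequality gives $x'(c_j) < -\alpha$ for every $j$. Taking convex combinations and using linearity of $x'$ yields simultaneously
$$x'(z) \;=\; \sum_{i=1}^m \lambda_i\, x'(b_i) \;\geq\; -\alpha \sum_{i=1}^m \lambda_i \;=\; -\alpha$$
and
$$x'(z) \;=\; \sum_{j=1}^n \mu_j\, x'(c_j) \;<\; -\alpha \sum_{j=1}^n \mu_j \;=\; -\alpha,$$
a contradiction; consequently $S_A^0 \cap S_{-K} = \emptyset$.

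The argument is essentially routine given the previous setup, and I do not expect any real obstacle: sublinearity of $\psi$ plays no role here, only linearity of $x'$ combined with the base normalizations $\psi(b_i) \in \{0,1\}$ and $\psi(c_j) = 1$ are needed to lift the pointwise separation inequalities to the convex hulls. The only subtlety worth noting is that the cases $\alpha = 0$ and $\alpha > 0$ are handled uniformly by the same chain of inequalities, with strict inequality on the $S_{-K}$ side always ensured by the strict separation assumption together with $\sum_j \mu_j = 1 > 0$.
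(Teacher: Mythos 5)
Your proof is correct and is essentially the paper's argument: the paper restricts the separation inequalities to $B_A \cup \{0\}$ and $-B_K$, uses the base normalization $\psi = 1$ to get $x'(a) \geq -\alpha > x'(k)$, and then invokes the convexity of the (open) half-spaces $\{x'(\cdot) \geq -\alpha\}$ and $\{x'(\cdot) < -\alpha\}$ to pass to $S_A^0$ and $S_{-K}$ — which is precisely your explicit convex-combination computation. No gaps.
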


\begin{proof} Assume that there exists $(x', \alpha) \in K^{a\#}$ such that \eqref{eq:sep_Ksetminus0_algebraic}. Then, we also have
	$
	x'(a) + \alpha \psi(a)  \geq  0 > x'(k) + \alpha \psi(k)$ for all $a \in B_A \cup \{0\}$ and $k \in -B_K$,
	hence $x'(a) \geq  -\alpha > x'(k)$  for all $a \in B_A \cup \{0\}$ and $k \in -B_K$. 
	By the convexity of (open) half spaces,
	$x'(a)   \geq -\alpha > x'(k)$ for all $a \in S_{A}^0$ and $k \in S_{-K}$, 
	i.e., $S_{A}^0 \cap S_{-K} = \emptyset$.
\end{proof}

\begin{theorem} \label{th:sep_main_top_back}
	Assume that $(E, \tau)$ is a real topological linear space, $K, A \subseteq E$ are nontrivial cones with  normlike-bases $B_{K}$ and  $B_{A}$, and $K$ is pointed. Define 
	$S_{-K} := {\rm conv}(-B_{K})$ and $S_{A}^0 := {\rm conv}(B_{A} \cup \{0\})$. Assume that there exists $(x^*, \alpha) \in K_\tau^{a\#}$ with
	\begin{align}\label{eq:sep_Ksetminus0_topologic}
	x^*(a) + \alpha \psi(a) &  \geq  0 > x^*(k) + \alpha \psi(k)   \quad \mbox{for all } a \in A \mbox{ and }k \in -K \setminus \{0\}.
	\end{align}
	Suppose further that one of the following conditions is fulfilled:	
	\begin{itemize}
		\item[$1^\circ$] $B_{K}$ is $\tau$-compact, 
		\item[$2^\circ$] ${\rm cl}_\tau\, S_{-K}$ is $\tau$-compact, $0 \notin {\rm cl}_\tau\, S_{-K}$ (e.g, if $\alpha > 0$ or $B_{K}$ is $\tau$-compact), $\psi$ is $\tau$-continuous, and $K$ is $\tau$-closed and convex.
	\end{itemize}
	Then,  $({\rm cl}_\tau\,S_{A}^0) \cap ({\rm cl}_\tau\, S_{-K}) = \emptyset$.
\end{theorem}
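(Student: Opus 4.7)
The strategy is to prove the disjointness by contradiction, extracting from the hypotheses strict pointwise bounds and upgrading them to uniform bounds on $\tau$-closures via the given compactness assumption. Since $\psi \equiv 1$ on both normlike-bases, $(x^*, \alpha) \in K_\tau^{a\#}$ gives $x^*(c) > \alpha$ for every $c \in B_K$ (equivalently $x^*(-c) < -\alpha$ on $-B_K$), while \eqref{eq:sep_Ksetminus0_topologic} gives $x^*(b) \geq -\alpha$ on $B_A$. Linearity of $x^*$ together with $x^*(0) = 0 \geq -\alpha$ lifts these to $x^*(a) \geq -\alpha$ for all $a \in S_A^0$ and $x^*(k) < -\alpha$ strictly for all $k \in S_{-K}$, and $\tau$-continuity of $x^*$ extends them to ${\rm cl}_\tau S_A^0$ (unchanged) and to ${\rm cl}_\tau S_{-K}$ (becoming only non-strict in the limit).

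For Case $1^\circ$, I would sharpen the pointwise inequality on $B_K$ to a uniform strict bound $\delta := \min_{c \in B_K} x^*(c) > \alpha$ by applying the Weierstrass extremum theorem (Jahn \cite[Th. 3.26]{Jahn2011}) to the $\tau$-continuous $x^*$ on the $\tau$-compact $B_K$. Linearity of $x^*$ transfers this to $x^*(k) \leq -\delta$ on $S_{-K}$, and $\tau$-continuity extends it to ${\rm cl}_\tau S_{-K}$. Since $-\delta < -\alpha$, any putative common point $y \in ({\rm cl}_\tau S_A^0) \cap ({\rm cl}_\tau S_{-K})$ would satisfy $-\alpha \leq x^*(y) \leq -\delta$, a contradiction.

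For Case $2^\circ$, $B_K$ is no longer assumed $\tau$-compact and the argument must go through the full function $\varphi_{x^*, \alpha}$, which is $\tau$-continuous by the $\tau$-continuity of $\psi$. The $\tau$-closedness and convexity of $K$ yield $S_{-K} \subseteq -K$, hence ${\rm cl}_\tau S_{-K} \subseteq -K$; combined with $0 \notin {\rm cl}_\tau S_{-K}$ this places ${\rm cl}_\tau S_{-K}$ inside $-K \setminus \{0\}$, on which \eqref{eq:sep_Ksetminus0_topologic} forces $\varphi_{x^*, \alpha} < 0$. Weierstrass on the $\tau$-compact ${\rm cl}_\tau S_{-K}$ then yields a uniform strict bound $\max_{k \in {\rm cl}_\tau S_{-K}} \varphi_{x^*, \alpha}(k) = -\varepsilon < 0$. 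The main obstacle, and the heart of Case $2^\circ$, is to produce a matching strict lower bound for $\varphi_{x^*, \alpha}$ on ${\rm cl}_\tau S_A^0$ incompatible with $-\varepsilon$: since $A$ need not be convex, $S_A^0 \not\subseteq A$ and the naive bound $\varphi_{x^*, \alpha} \geq 0$ fails on $S_A^0$. To overcome this, I would exploit, for any putative common point $y \in {\rm cl}_\tau S_{-K} \subseteq -K \setminus \{0\}$, the factorization $y = \psi(y) \cdot (y/\psi(y))$ with $y/\psi(y) \in -B_K$ (valid because the normlike-base property gives $\psi > 0$ on $K \setminus \{0\}$, hence on $-K \setminus \{0\}$ by symmetry of $\psi$), combining the strict pointwise bound $x^*(w) < -\alpha$ on $-B_K$, the uniform positive lower bound $\psi \geq \eta > 0$ on the compact ${\rm cl}_\tau S_{-K}$ (via a second application of Weierstrass), and the closure-side bound $x^*(y) \geq -\alpha$ from ${\rm cl}_\tau S_A^0$, to assemble a strict contradiction. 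In particular, the case $\alpha = 0$ follows directly: then $\varphi_{x^*, \alpha}(y) < 0$ gives $x^*(y) < 0$ while $x^*(y) \geq 0$ from the $A$-side is immediate, so only the case $\alpha > 0$ needs the full factorization argument.
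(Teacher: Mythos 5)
Your Case $1^\circ$ is correct and is essentially the paper's argument: Weierstrass on the $\tau$-compact $B_K$ gives the uniform bound $\delta=\min_{B_K}x^*>\alpha$, linearity transfers $x^*\leq-\delta$ to ${\rm conv}(-B_K)$, continuity to its closure, and this is incompatible with $x^*\geq-\alpha$ on ${\rm cl}_\tau S_A^0$. (You are also right that the preliminary non-strict bound $x^*\leq-\alpha$ on ${\rm cl}_\tau S_{-K}$ alone would not suffice; the uniform strict improvement is genuinely needed.)

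Case $2^\circ$, however, has a gap in the final ``assembly.'' The ingredients you propose to combine at a putative common point $y\in{\rm cl}_\tau S_{-K}\cap{\rm cl}_\tau S_A^0$ --- the pointwise bound $x^*(w)<-\alpha$ on $-B_K$, the factorization $y=\psi(y)\,w$ with $w=y/\psi(y)\in-B_K$, the bound $\psi(y)\geq\eta>0$, and $x^*(y)\geq-\alpha$ --- do not yield a contradiction when $\alpha>0$. The factorization gives only $x^*(y)=\psi(y)\,x^*(w)<-\alpha\psi(y)$, and since $\psi\leq 1$ on $S_{-K}$ (convexity of $\psi$ plus $\psi\equiv1$ on $-B_K$) one has $-\alpha\psi(y)\geq-\alpha$, so the chain $-\alpha\leq x^*(y)<-\alpha\psi(y)$ merely forces $\psi(y)<1$; adding $\psi(y)\geq\eta$ or even the uniform bound $\varphi_{x^*,\alpha}(y)\leq-\varepsilon$ only yields $\eta\leq\psi(y)\leq1-\varepsilon/\alpha$, which is consistent. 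You also misplace the obstacle: no strict bound on the $A$-side is needed, and the factorization through $-B_K$ is a detour that loses information (multiplying a negative bound by $\psi(y)\leq1$ weakens it). The step you are missing is already within reach of what you have established: since $-B_K\subseteq S_{-K}\subseteq{\rm cl}_\tau S_{-K}$ and $\psi\equiv1$ on $-B_K$, your uniform bound $\varphi_{x^*,\alpha}\leq-\varepsilon$ on ${\rm cl}_\tau S_{-K}$ restricts to $x^*(w)\leq-\alpha-\varepsilon$ for all $w\in-B_K$; the closed half-space $\{x\mid x^*(x)\leq-\alpha-\varepsilon\}$ is convex and $\tau$-closed, hence contains ${\rm cl}_\tau({\rm conv}(-B_K))={\rm cl}_\tau S_{-K}$, and this is disjoint from $\{x\mid x^*(x)\geq-\alpha\}\supseteq{\rm cl}_\tau S_A^0$. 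This is exactly how the paper closes Case $2^\circ$ (with $\gamma=\beta-\alpha$ in its notation).
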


\begin{proof} Firstly, assume that $1^\circ$ is valid. 
	By our assumptions, there exists $(x^*, \alpha) \in K_\tau^{a\#}$ such that \eqref{eq:sep_Ksetminus0_topologic} holds true. This means in particular that 
	$x^*(a) + \alpha \psi(a)   \geq  0 > x^*(k) + \alpha \psi(k)$ for all $a \in B_A \cup \{0\}$ and all $k \in -B_{K} \subseteq -K \setminus \{0\}$.	
	Due to the fact that $B_A$ and $B_K$ are normlike-bases (i.e., $\psi(a) = \psi(k) = 1$ for all $a \in B_A, k \in -B_{K}$), we get 
	$
	x^*(a) \geq  -\alpha > x^*(k)$ for all $a \in B_A \cup \{0\}$  and all $k \in -B_{K}$.	
	Since $x^*$ is $\tau$-continuous and $B_{K}$ is $\tau$-compact, there is $\beta < 0$ such that
	$
	x^*(a) \geq  -\alpha > \beta \geq x^*(k)$  for all $a \in B_A \cup \{0\}$ and all $k \in -B_{K}$ 
	by a general version of the Weierstraß theorem (see \cite[Th. 3.26]{Jahn2011}). 
	By the convexity of closed half spaces and the $\tau$-continuity of $x^*$, we get $
	x^*(a)   \geq  -\alpha > \beta \geq x^*(k)$ for all $a \in {\rm cl}_\tau\, S_{A}^0$ and all $k \in {\rm cl}_\tau\, S_{-K}$.
	As a direct consequence we conclude that
	$({\rm cl}_\tau\, S_{A}^0) \cap ({\rm cl}_\tau\, S_{-K}) = \emptyset$.

	Secondly, assume that $2^\circ$ is valid. Since $K$ is convex and $\tau$-closed, we have ${\rm cl}_\tau\, S_{-K} = -{\rm cl}_\tau({\rm conv}(B_{K}))\subseteq -{\rm cl}_\tau\, K = -K $. Moreover, by $0 \notin {\rm cl}_\tau\, S_{K}$, we conclude that ${\rm cl}_\tau\, S_{-K} \subseteq -K \setminus \{0\}$.
	By our assumptions, there exists $(x^*, \alpha) \in K_\tau^{a\#}$ such that \eqref{eq:sep_Ksetminus0_topologic} holds true. In particular, we obtain $
	x^*(a) + \alpha \psi(a) \geq  0 > x^*(-k) + \alpha \psi(k)$ for all $a \in B_A \cup \{0\}$ and all $k \in {\rm cl}_\tau\, S_{-K}$.	
	Since $k \mapsto x^*(k) + \alpha \psi(k)$ is $\tau$-continuous and $ {\rm cl}_\tau\, S_{-K}$ is $\tau$-compact, there is $\beta < 0$ such that
	$
	x^*(a) + \alpha \psi(a)  \geq  0 > \beta \geq x^*(k) + \alpha \psi(k)$  for all $a \in B_A \cup \{0\}$ and $k \in  {\rm cl}_\tau\, S_{-K} \supseteq -B_K$
	by the Weierstraß theorem. Define $\gamma := \beta - \alpha$. Due to the fact that $B_A$ and $B_K$ are normlike-bases (i.e., $\psi(a) = \psi(k) = 1$ for all $a \in B_A, k \in -B_{K}$), we get 
	$
	x^*(a)  \geq  -\alpha > \gamma \geq x^*(k)$ for all $a \in B_A \cup \{0\}$ and all $k \in -B_K$. 
	By the convexity of closed half spaces and the $\tau$-continuity of $x^*$, it follows
	$
	x^*(a)  \geq  -\alpha > \gamma \geq x^*(k)$   for all $a \in {\rm cl}_\tau\, S_{A}^0$ and all $k \in {\rm cl}_\tau\, S_{-K}$. 
	As desired, also in the second case we conclude
	$({\rm cl}_\tau\,S_{A}^0) \cap ({\rm cl}_\tau\, S_{-K}) = \emptyset$.
\end{proof}

Next, we state our main strict cone separation theorem. 

\begin{theorem} \label{th:sep_main_top_iff}
	Assume that $(E, \mathcal{F}, \tau)$ is a real locally convex space, $K, A \subseteq E$ are nontrivial cones with normlike-bases $B_{K}$ and $B_{A}$, and $K$ is pointed. Define
	$S_{-K} := {\rm conv}(-B_{K})$ and $S_{A}^0 := {\rm conv}(B_{A} \cup \{0\})$. Consider the two assertions:	
	\begin{itemize}
	    \item[$1^\circ$] 
	    $
	    ({\rm cl}_\tau\,S_{A}^0) \cap ({\rm cl}_\tau\, S_{-K}) = \emptyset.
	    $
	    \item[$2^\circ$]  There exists $(x^*, \alpha) \in K_\tau^{a\#} \cap (E^* \times \mathbb{P})$ such that \eqref{sep_non_top_1} and \eqref{sep_non_top_2} are valid.
	\end{itemize}	
	If one of the sets ${\rm cl}_\tau\, S_{-K}$ and ${\rm cl}_\tau\,S_{A}^0$ is $\tau$-compact, then $1^\circ \Longrightarrow 2^\circ$.\\
	If either $B_{K}$ is $\tau$-compact or ${\rm cl}_\tau\, S_{-K}$ is $\tau$-compact, $\psi$ is $\tau$-continuous (e.g., $\psi \in \mathcal{F}$),  and $K$ is $\tau$-closed and convex, then $2^\circ \Longrightarrow 1^\circ$.
\end{theorem}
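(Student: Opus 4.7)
The plan is to assemble this equivalence as a direct corollary of the two immediately preceding strict cone separation results, Theorem \ref{th:sep_main_top} and Theorem \ref{th:sep_main_top_back}. Both implications are essentially already proved; the theorem repackages them as a single statement under a locally convex framework.

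For the implication $1^\circ \Longrightarrow 2^\circ$, I would simply invoke Theorem \ref{th:sep_main_top}. The hypothesis that $(E, \mathcal{F}, \tau)$ is locally convex together with the nonemptiness, pointedness and the normlike-base structure on $K$ and $A$ exactly matches its setting, and the compactness of one of ${\rm cl}_\tau\, S_{-K}$ or ${\rm cl}_\tau\,S_{A}^0$ is assumed in this direction. Theorem \ref{th:sep_main_top} then produces some $(x^*, \alpha) \in K_\tau^{a\#} \cap (E^* \times \mathbb{P})$ satisfying \eqref{sep_non_top_1} and \eqref{sep_non_top_2}, which is precisely $2^\circ$.

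For the reverse implication $2^\circ \Longrightarrow 1^\circ$, I would apply Theorem \ref{th:sep_main_top_back}. The hypothesis $(x^*, \alpha) \in K_\tau^{a\#} \cap (E^* \times \mathbb{P})$ together with the combined inequalities \eqref{sep_non_top_1} and \eqref{sep_non_top_2} yields exactly the condition \eqref{eq:sep_Ksetminus0_topologic} required as input to Theorem \ref{th:sep_main_top_back} (namely $x^*(a) + \alpha \psi(a) \geq 0 > x^*(k) + \alpha \psi(k)$ for all $a \in A$ and $k \in -K \setminus \{0\}$). If $B_K$ is $\tau$-compact, case $1^\circ$ of Theorem \ref{th:sep_main_top_back} applies directly. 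If instead ${\rm cl}_\tau\, S_{-K}$ is $\tau$-compact, $\psi$ is $\tau$-continuous, and $K$ is $\tau$-closed and convex, then all of case $2^\circ$'s hypotheses of Theorem \ref{th:sep_main_top_back} are met, because the membership $(x^*, \alpha) \in E^* \times \mathbb{P}$ forces $\alpha > 0$, which as remarked in that theorem ensures the auxiliary condition $0 \notin {\rm cl}_\tau\, S_{-K}$.

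There is no real obstacle here: the argument is purely bookkeeping around the two preceding results, and the only point worth noting explicitly in the write-up is that $\alpha > 0$ (from $(x^*, \alpha) \in E^* \times \mathbb{P}$) is what lets us dispense with any extra assumption of the form $0 \notin {\rm cl}_\tau\, S_{-K}$ in the second subcase. Since $\psi \in \mathcal{F}$ automatically gives $\tau$-continuity of $\psi$ in a locally convex space topologized by $\mathcal{F}$, the parenthetical ``e.g., $\psi \in \mathcal{F}$'' is consistent with the framework of the theorem.
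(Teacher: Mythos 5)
Your proposal is correct and matches the paper's own proof, which likewise obtains the theorem by combining Theorems \ref{th:sep_main_top} and \ref{th:sep_main_top_back}; your extra remark that $\alpha>0$ supplies the condition $0 \notin {\rm cl}_\tau\, S_{-K}$ needed in case $2^\circ$ of Theorem \ref{th:sep_main_top_back} is exactly the parenthetical justification the paper itself records there.
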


\begin{proof}
	Combining the results in Theorems \ref{th:sep_main_top} and \ref{th:sep_main_top_back} we get this result.
\end{proof}

\begin{remark} \label{rem:sepBoundaryA}
	Assume that $(E, \mathcal{F}, \tau)$ is a real locally convex space, and $A \subseteq E$ is a nontrivial, $\tau$-closed cone with normlike-base 
	$
	B_A = \{a \in A \mid \psi(a) = 1 \}.
	$
	Then, $\tilde{A} := {\rm bd}_{\tau}\,A$ is a nontrivial, $\tau$-closed cone with normlike-base
	$$
	B_{\tilde{A}} = \{a \in \tilde{A} \mid \psi(a) = 1 \} = \{a \in A \mid \psi(a) = 1 \} \cap  \tilde{A} = B_A \cap {\rm bd}_{\tau}\,A.
	$$
	Thus, in the separation results given in Theorems \ref{th:sep_main_top} and \ref{th:sep_main_top_back} one could replace $S_A^0$ by $S_{\partial A}^0 := S_{\tilde{A}}^0 = {\rm conv}((B_A \cap {\rm bd}_{\tau}\,A) \cup \{0\})$ and state the corresponding conclusions using ${\rm bd}_{\tau}\,A$ instead of $A$. By doing this, we get a similar result (see Corollary \ref{th:sep_main_top_iff_kasimbeyli}) as derived in the  paper by Kasimbeyli \cite[Th. 4.3]{Kasimbeyli2010}.
\end{remark}

\begin{corollary} \label{th:sep_main_top_iff_kasimbeyli}
	Assume that $(E, \mathcal{F}, \tau)$ is a real locally convex space, $K \subseteq E$ is a nontrivial,  pointed cone  with normlike-base $B_{K}$, and $A \subseteq E$ is a $\tau$-closed, nontrivial cone  with normlike-base $B_{A}$. Define
	$S_{-K} := {\rm conv}(-B_{K})$ and $S_{\partial A}^0 := {\rm conv}((B_{A} \cap {\rm bd}_\tau\,A) \cup \{0\})$ for normlike-bases $B_{K}$ and $B_{A}$. Consider the two assertions:	
	\begin{itemize}
		\item[$1^\circ$] 
        $
		({\rm cl}_\tau\, S_{\partial A}^0) \cap ({\rm cl}_\tau\, S_{-K}) = \emptyset.
		$
		\item[$2^\circ$]  There exists $(x^*, \alpha) \in K_\tau^{a\#} \cap (E^* \times \mathbb{P})$ such that
		\begin{align*}
		x^*(a) + \alpha \psi(a) & \geq  0 \geq x^*(k) + \alpha \psi(k)   \;\; \mbox{for all } a \in {\rm bd}_\tau\,A \mbox{ and }k \in -K, \\
		x^*(a) + \alpha \psi(a) &  >  0 > x^*(k) + \alpha \psi(k)   \;\; \mbox{for all } a \in ({\rm bd}_\tau\,A)\setminus \{0\} \mbox{ and }k \in -K \setminus \{0\}. 
		\end{align*}	
	\end{itemize}	
	If one of the sets ${\rm cl}_\tau\, S_{-K}$ and ${\rm cl}_\tau\, S_{\partial A}^0$ is $\tau$-compact, then $1^\circ \Longrightarrow 2^\circ$.\\
	If either $B_{K}$ is $\tau$-compact or ${\rm cl}_\tau\, S_{-K}$ is $\tau$-compact, $\psi$ is $\tau$-continuous (e.g., $\psi \in \mathcal{F}$), and $K$ is $\tau$-closed and convex, then $2^\circ \Longrightarrow 1^\circ$.
\end{corollary}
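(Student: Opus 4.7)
The plan is to reduce this corollary to Theorem \ref{th:sep_main_top_iff} by replacing the cone $A$ with its topological boundary $\tilde{A} := \mathrm{bd}_\tau\, A$, as sketched in Remark \ref{rem:sepBoundaryA}. The whole task is then to verify that all hypotheses carry over under this substitution.

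First, I would check that $\tilde{A}$ is admissible as an input to Theorem \ref{th:sep_main_top_iff}. Since $A$ is $\tau$-closed, we have $\mathrm{bd}_\tau\, A = A \cap \mathrm{cl}_\tau(E \setminus A)$, which is $\tau$-closed. The cone property $\mathbb{R}_+ \cdot \tilde{A} = \tilde{A}$ follows from the fact that for each $\lambda > 0$ multiplication by $\lambda$ is a $\tau$-homeomorphism sending $A$ to $A$, hence $\lambda \cdot \mathrm{bd}_\tau\, A = \mathrm{bd}_\tau\, A$. Nontriviality of $\tilde{A}$ is implicit in the assertion we want to prove (in the degenerate case $\mathrm{bd}_\tau\, A = \{0\}$, $S^0_{\partial A} = \{0\}$ and both assertions $1^\circ$ and $2^\circ$ trivialize consistently).

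Next, I would identify a normlike-base for $\tilde{A}$. Since $B_A$ is a normlike-base of $A$, the seminorm $\psi$ is strictly positive on $A \setminus \{0\}$, hence also on $\tilde{A} \setminus \{0\} \subseteq A \setminus \{0\}$. Therefore, in the notation of Example \ref{def:normlikebase},
\[
B_{\tilde{A}} = \{a \in \tilde{A} \mid \psi(a) = 1\} = \{a \in A \mid \psi(a) = 1\} \cap \mathrm{bd}_\tau\, A = B_A \cap \mathrm{bd}_\tau\, A
\]
is a normlike-base of $\tilde{A}$. Consequently the convex set built from $B_{\tilde{A}}$ agrees exactly with the set in the statement:
\[
S^0_{\tilde{A}} := \mathrm{conv}(B_{\tilde{A}} \cup \{0\}) = \mathrm{conv}\bigl((B_A \cap \mathrm{bd}_\tau\, A) \cup \{0\}\bigr) = S^0_{\partial A}.
\]

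Finally, I would apply Theorem \ref{th:sep_main_top_iff} to the pair $(K, \tilde{A})$. The compactness hypotheses of the corollary match those of the theorem verbatim (the set $S^0_{\partial A}$ plays the role of the old $S^0_A$, and the hypotheses on $B_K$, $\psi$ and $K$ are unchanged). The implication $1^\circ \Rightarrow 2^\circ$ of the theorem yields the existence of $(x^*, \alpha) \in K_\tau^{a\#} \cap (E^* \times \mathbb{P})$ such that the inequalities in \eqref{sep_non_top_1} and \eqref{sep_non_top_2} hold with $\tilde{A} = \mathrm{bd}_\tau\, A$ in place of $A$, which is exactly assertion $2^\circ$ of the corollary. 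The reverse implication $2^\circ \Rightarrow 1^\circ$ is obtained analogously by the corresponding half of Theorem \ref{th:sep_main_top_iff}.

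I do not expect a substantial obstacle: the only point that requires a line of justification is that $B_A \cap \mathrm{bd}_\tau\, A$ is genuinely a normlike-base of the boundary cone (handled above via the positivity of $\psi$ on $A \setminus \{0\}$), and that the boundary of a $\tau$-closed cone is again a $\tau$-closed cone. Everything else is a transcription of Theorem \ref{th:sep_main_top_iff} through the identification $S^0_{\tilde{A}} = S^0_{\partial A}$.
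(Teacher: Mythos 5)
Your proposal is correct and follows essentially the paper's own route: the corollary is exactly the reduction sketched in Remark \ref{rem:sepBoundaryA}, namely observing that $\tilde{A}:={\rm bd}_\tau\, A$ is again a nontrivial, $\tau$-closed cone with normlike-base $B_{\tilde{A}}=B_A\cap {\rm bd}_\tau\, A$ (so that $S^0_{\tilde{A}}=S^0_{\partial A}$) and then applying both implications of Theorem \ref{th:sep_main_top_iff} to the pair $(K,\tilde{A})$. The only small detail worth adding to your cone-property check is that $0\in{\rm bd}_\tau\, A$, which holds because $0\in{\rm int}_\tau\, A$ would make $A$ absorbing and hence force $A=E$, contradicting nontriviality.
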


Let us consider the pure convex case (i.e., $K$ and $A$ are convex cones). 

\begin{theorem}   \label{cor:sep_main_top_iff_convex}
	Assume that $(E, \mathcal{F}, \tau)$ is a real locally convex space, $K, A \subseteq E$ are nontrivial, $\tau$-closed, convex cones with normlike-bases $B_{K}$ and $B_{A}$, and $K$ is pointed,  as well as $\psi$ is a $\tau$-continuous seminorm (e.g., $\psi \in \mathcal{F}$). Define $S_{-K} := {\rm conv}(-B_{K})$ and $S_{A}^0 := {\rm conv}(B_{A} \cup \{0\})$. 
	Suppose that ${\rm cl}_\tau\, S_{-K}$ is $\tau$-compact.	
	Then, the following assertions are equivalent:
	\begin{itemize}
		\item[$1^\circ$] $({\rm cl}_\tau\,S_{A}^0) \cap ({\rm cl}_\tau\, S_{-K}) = \emptyset$.		
		\item[$2^\circ$] There exists $(x^*, \alpha) \in K_\tau^{a\#} \cap (E^* \times \mathbb{P})$ such that \eqref{sep_non_top_1} and \eqref{sep_non_top_2} are valid.
		\item[$3^\circ$] $A \cap (-K \setminus \{0\}) = \emptyset$ and $0 \notin {\rm cl}_\tau\, S_{-K}$.
	\end{itemize}
\end{theorem}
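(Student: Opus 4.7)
The plan is to close the loop of implications $1^\circ \Longrightarrow 2^\circ \Longrightarrow 3^\circ \Longrightarrow 1^\circ$, leaning heavily on the previously established Theorem \ref{th:sep_main_top_iff} for the harder direction and supplying a short direct argument for the converse based on the convexity and $\tau$-closedness of $K$ and $A$.

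First, I would observe that the hypotheses of Theorem \ref{cor:sep_main_top_iff_convex} match exactly those required for both directions of Theorem \ref{th:sep_main_top_iff}: ${\rm cl}_\tau\, S_{-K}$ is $\tau$-compact, $B_K$ generates the pointed cone $K$, $\psi$ is $\tau$-continuous, and $K$ is $\tau$-closed and convex. Therefore the equivalence $1^\circ \iff 2^\circ$ is immediate from Theorem \ref{th:sep_main_top_iff}.

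Next, I would derive $2^\circ \Longrightarrow 3^\circ$ as an immediate reading of the conclusion of $2^\circ$: if some $x \in A \cap (-K \setminus \{0\})$ existed, then \eqref{sep_non_top_2} would force both $x^*(x) + \alpha \psi(x) > 0$ (from $x \in A \setminus \{0\}$) and $x^*(x) + \alpha \psi(x) < 0$ (from $x \in -K \setminus \{0\}$), a contradiction; this gives $A \cap (-K \setminus \{0\}) = \emptyset$. The condition $0 \notin {\rm cl}_\tau\, S_{-K}$ follows either from the ``In particular'' part of Theorem \ref{th:sep_main_top} or by noting that $1^\circ$ (equivalent to $2^\circ$) together with $0 \in S_A^0 \subseteq {\rm cl}_\tau\, S_A^0$ directly yields it.

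The only step requiring a short new argument is $3^\circ \Longrightarrow 1^\circ$. Since $B_A \subseteq A$, $A$ is convex and contains $0$, we obtain $S_A^0 = {\rm conv}(B_A \cup \{0\}) \subseteq A$, and the $\tau$-closedness of $A$ yields ${\rm cl}_\tau\, S_A^0 \subseteq A$. Analogously, $-B_K \subseteq -K$, the cone $-K$ is convex and $\tau$-closed, so ${\rm cl}_\tau\, S_{-K} \subseteq -K$. Hence $({\rm cl}_\tau\, S_A^0) \cap ({\rm cl}_\tau\, S_{-K}) \subseteq A \cap (-K)$, and by the disjointedness assumption $A \cap (-K \setminus \{0\}) = \emptyset$ in $3^\circ$, this intersection is a subset of $\{0\}$. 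Since by the second condition in $3^\circ$ we have $0 \notin {\rm cl}_\tau\, S_{-K}$, we conclude $({\rm cl}_\tau\, S_A^0) \cap ({\rm cl}_\tau\, S_{-K}) = \emptyset$, which is $1^\circ$. There is essentially no obstacle here; the whole proof reduces to invoking Theorem \ref{th:sep_main_top_iff} and exploiting that under convexity plus $\tau$-closedness the sets $S_A^0, S_{-K}$ stay trapped inside $A, -K$ after closure.
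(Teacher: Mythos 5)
Your proof is correct and follows essentially the same route as the paper: the equivalence $1^\circ \iff 2^\circ$ is taken from Theorem \ref{th:sep_main_top_iff}, the implication to $3^\circ$ comes from the separation inequalities (the paper cites the ``In particular'' part of Theorem \ref{th:sep_main_top} for $1^\circ \Longrightarrow 3^\circ$, which is the same content), and $3^\circ \Longrightarrow 1^\circ$ uses exactly the paper's observation that under convexity and $\tau$-closedness one has ${\rm cl}_\tau\,S_{A}^0 \subseteq A$ and ${\rm cl}_\tau\,S_{-K} \subseteq -K$, so the intersection lands in $A \cap (-K) \subseteq \{0\}$ and is killed by $0 \notin {\rm cl}_\tau\,S_{-K}$.
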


\begin{proof}
	Theorem \ref{th:sep_main_top} provides $1^\circ \Longrightarrow 3^\circ$ while Theorem \ref{th:sep_main_top_iff} shows that $1^\circ \iff 2^\circ$. 
	The remaining implication $3^\circ \Longrightarrow 1^\circ$ follows by the fact $({\rm cl}_\tau\,S_{A}^0) \cap ({\rm cl}_\tau\, S_{-K}) \subseteq A \cap (-K \setminus \{0\})$ for $\tau$-closed, convex cones $A$ and $-K$, and $0 \notin {\rm cl}_\tau\, S_{-K}$.
\end{proof}

\begin{remark} 
    Assume that the assumptions of Theorem \ref{cor:sep_main_top_iff_convex} are satisfied and let $E$ be separated.
	According to Jahn \cite[Th. 3.22]{Jahn2011}, if the topology of $(E, \mathcal{F}, \tau)$ gives $E$ as the topological dual space of $E^*$, and ${\rm int}\, K_\tau^+ \neq \emptyset$, then each of the three assertions given in Theorem \ref{cor:sep_main_top_iff_convex} is equivalent to 
	\begin{itemize}
		\item[$4^\circ$] $0 \notin {\rm cl}_\tau\, S_{-K}$ and there exists $x^* \in E^*$ such that $
		x^*(a)   \geq  0 > x^*(k)$ for all $a \in A$ and  $k \in -K \setminus \{0\}$.
	\end{itemize}	
\end{remark}

\begin{remark}
	Theorem \ref{th:sep_main_top} shows that the condition $({\rm cl}_\tau\,S_{A}^0) \cap ({\rm cl}_\tau\, S_{-K}) = \emptyset$ implies $A \cap (-K \setminus \{0\}) = \emptyset$ and $0 \notin {\rm cl}_\tau\, S_{-K}$ (since $0 \in {\rm cl}_\tau\, S_{A}^0$) for nontrivial (not necessarily convex) cones $K$ and $A$ in  $E$.
	Without the convexity assumption, the conditions  $A \cap (-K \setminus \{0\}) = \emptyset$ and $0 \notin {\rm cl}_\tau\, S_{-K}$ do not imply the condition $({\rm cl}_\tau\,S_{A}^0) \cap ({\rm cl}_\tau\, S_{-K}) = \emptyset$, as to see in our Example \ref{ex:counterex_AcapcorK}.
\end{remark}

\section{Conclusions}

The separation of two sets (or more specific of two cones) plays an important role in different fields of mathematics (such as variational analysis, convex analysis, convex geometry, optimization). 
In our paper, we contributed with some new results for the separation of two (not necessarily convex) cones by a (convex) cone / conical surface in real (topological) linear spaces (see Section \ref{sec:cone_separation_lcs}). 
Basically, we followed the separation approach by Kasimbeyli \cite{Kasimbeyli2010} based on augmented dual cones and Bishop-Phelps type (normlinear) separating functions.  
As a key tool for deriving our main nonlinear cone separation theorems, we used classical separation theorems for convex sets.
Due to the generalization from real reflexive Banach spaces (as considered in \cite{Kasimbeyli2010}) to general real (topological) linear spaces and real locally convex spaces, respectively, we had to pay attention 
to some extended concepts (such as seminorms, seminorm-bases / normlike-bases of (not necessarily convex) cones, generalized interiority notions).

Concerning future research, we like to use our derived nonlinear cone separation theorems in order to develop separation theorems 
for (not necessarily convex) sets without cone properties in real (topological) linear spaces. 
Moreover, we aim to use our theorems for deriving some new scalarization results for general vector optimization problems in real (topological) linear spaces. 
In particular, an extension of the conic scalarization approach proposed by Kasimbeyli \cite{Kasimbeyli2010, Kasimbeyli2013} and corresponding applications for deriving duality statements (see, e.g., Kasimbeyli and Karimi \cite{KasimbeyliKarimi21}) are of interest.

\end{document}